\def\eqref#1{equation~\ref{#1}}
\def\1{\bm{1}}
\def\eps{{\epsilon}}
\DeclareMathAlphabet{\mathsfit}{\encodingdefault}{\sfdefault}{m}{sl}
\SetMathAlphabet{\mathsfit}{bold}{\encodingdefault}{\sfdefault}{bx}{n}
\newcommand{\Var}{\mathrm{Var}}
\newcommand{\Cov}{\mathrm{Cov}}
\DeclareMathOperator*{\argmax}{arg\,max}
\DeclareMathOperator*{\argmin}{arg\,min}
\DeclareMathOperator{\sign}{sign}
\newcommand{\RED}[1]{{\color{red} #1}}
\newcommand{\cost}{\mathbf{C}}
\newcommand{\pio}{\hat \pi}
\newcommand{\kl}{\mathrm{KL}}
\renewcommand{\Oo}{\mathcal{O}}
\renewcommand{\eqref}[1]{(\ref{#1})}
\title{Sparsistency for Inverse Optimal Transport}
\author{
Francisco Andrade \\
CNRS and ENS, PSL Universit\'e \\
\texttt{andrade.francisco@ens.fr}
\and
Gabriel Peyr\'e  \\
CNRS and ENS, PSL Universit\'e \\
\texttt{gabriel.peyre@ens.fr}
\and
Clarice Poon \\
University of Warwick \\
\texttt{clarice.poon@warwick.ac.uk}
}
\newcommand{\Asoln}{{\widehat A}}
\renewcommand{\RED}[1]{#1}
\begin{document}

\maketitle
\begin{abstract}
Optimal Transport is a useful metric to compare probability distributions and to compute a pairing given a ground cost. Its entropic regularization  variant (eOT) is crucial to have fast algorithms and reflect fuzzy/noisy matchings. This work focuses on Inverse Optimal Transport (iOT), the problem of inferring the ground cost from samples drawn from a coupling that solves an eOT problem. It is a relevant problem that can be used to infer unobserved/missing links, and to obtain meaningful information about the structure of the ground cost yielding the pairing. On one side, iOT benefits from convexity, but on the other side, being ill-posed, it requires regularization to handle the sampling noise. This work presents an in-depth theoretical study of the $\ell_1$ regularization to model for instance Euclidean costs with sparse interactions between features. 
Specifically, we derive a sufficient condition for the robust recovery of the sparsity of the ground cost that can be seen as a far reaching generalization of the Lasso’s celebrated ``Irrepresentability Condition’’. To provide additional insight into this condition, we work out in detail the Gaussian case. We show that as the entropic penalty varies, the iOT problem  interpolates between a graphical Lasso and a classical Lasso, thereby establishing a connection between iOT and graph estimation, an important problem in ML.
\end{abstract}

\section{Introduction} 

Optimal transport has emerged as a key theoretical and numerical ingredient in machine learning for performing learning over probability distributions. It enables the comparison of probability distributions in a ``geometrically faithful'' manner by lifting a ground cost (or ``metric'' in a loose sense) between pairs of points to a distance between probability distributions, metrizing the convergence in law. However, the success of this OT approach to ML is inherently tied to the hypothesis that the ground cost is adapted to the problem under study. This necessitates the exploration of ground metric learning. However, it is exceptionally challenging due to its a priori highly non-convex nature when framed as an optimization problem, thereby inheriting complications in its mathematical analysis. As we illustrate in this theoretical article, these problems become tractable -- numerically and theoretically -- if one assumes access to samples from the OT coupling (i.e., having access to some partial matching driven by the ground cost). Admittedly, this is a restrictive setup, but it arises in practice (refer to subsequent sections for illustrative applications) and can also be construed as a step in a more sophisticated learning pipeline. The purpose of this paper is to propose some theoretical understanding of the possibility of stably learning a ground cost from partial matching observations.

\subsection{Previous Works}

\paragraph{Entropic Optimal Transport. }

OT has been instrumental in defining and studying various procedures at the core of many ML pipelines, such as bag-of-features matching~\cite{rubner-2000}, distances in NLP~\cite{kusner2015word}, generative modeling~\cite{arjovsky2017wasserstein}, flow evolution for sampling~\cite{de2021diffusion}, and even single-cell trajectory inference~\cite{schiebinger2019optimal}. We refer to the monographs \cite{santambrogio2015optimal} for detailed accounts on the theory of OT, and \cite{peyre2019computational} for its computational aspects.
Of primary importance to our work, entropic regularization of OT is the workhorse of many ML applications. It enables a fast and highly parallelizable estimation of the OT coupling using the Sinkhorn algorithm~\cite{Sinkhorn64}. More importantly, it defines a smooth distance that incorporates the understanding that matching procedures should be modeled as a noisy process (i.e., should not be assumed to be 1:1). These advantages were first introduced in ML by the seminal paper of Cuturi \cite{CuturiSinkhorn}, and this approach finds its roots in Schr{\"o}dinger's work in statistical physics~\cite{leonard2012schrodinger}. The role of noise in matching (with applications in economics) and its relation to entropic OT were advanced in a series of papers by Galichon and collaborators~\cite{galichon2010matching,dupuy2014personality,galichon2022cupid}; see the book~\cite{galichon2018optimal}. These works are key inspirations for the present paper, which aims at providing more theoretical understanding in the case of inverse OT (as detailed next).

\paragraph{Metric Learning.}

The estimation of some metrics from pairwise interactions (either positive or negative) falls into the classical field of metric learning in ML, and we refer to the monograph~\cite{bellet2013survey} for more details. In contrast to the inverse OT (iOT) problem considered in this paper, classical metric learning is more straightforward, as no global matching between sets of points is involved. This allows the metric to be directly optimized, while the iOT problem necessitates some form of bilevel optimization.
Similarly to our approach, since the state space is typically continuous, it is necessary to restrict the class of distances to render the problem tractable. The common option, which we also adopt in our paper to exemplify our findings, is to consider the class of Mahalanobis distances. These distances generalize the Euclidean distance and are equivalent to computing a vectorial embedding of the data points. See, for instance,~\cite{xing2002distance,weinberger2006distance, davis2008structured}.

\paragraph{OT Ground Metric Learning.} 

The problem of estimating the ground cost driving OT in a supervised manner was first addressed by \cite{cuturi2014ground}. Unlike methods that have access to pairs of samples, the ground metric learning problem requires pairs of probability distributions and then evolves into a classical metric learning problem, but within the OT space. The class of ground metrics can be constrained, for example, by utilizing Mahalanobis~\cite{wang2012supervised, xu2018multi, kerdoncuff:ujm-02611800} or geodesic distances~\cite{heitz2021ground}, to devise more efficient learning schemes. The study \cite{zen2014simultaneous} conducts ground metric learning and matrix factorization simultaneously, finding applications in NLP~\cite{huang2016supervised}. It is noteworthy that ground metric learning can also be linked to generative models through adversarial training~\cite{genevay2018learning} and to robust learning~\cite{paty2020regularized} by maximizing the cost to render the OT distance as discriminative as possible.

\paragraph{Inverse Optimal Transport.}

The inverse optimal transport problem (iOT) can be viewed as a specific instance of ground metric learning, where one aims to infer the ground cost from partial observations of the (typically entropically regularized) optimal transport coupling. This problem was first formulated and examined by Dupuy and Galichon~\cite{dupuy2014personality} over a discrete space (also see~\cite{galichon2022cupid} for a more detailed analysis), making the fundamental remark that the maximum likelihood estimator amounts to solving a convex problem. The mathematical properties of the iOT problem for discrete space (i.e., direct computation of the cost between all pairs of points) are explored in depth in~\cite{chiu2022discrete}, studying uniqueness (up to trivial ambiguities) and stability to pointwise noise.
Note that our theoretical study differs fundamentally as we focus on continuous state spaces. This ``continuous'' setup assumes access only to a set of couples, corresponding to matches (or links) presumed to be drawn from an OT coupling. In this scenario, the iOT is typically an ill-posed problem, and \cite{dupuy2016estimating,carlier2023sista} propose regularizing the maximum likelihood estimator with either a low-rank (using a nuclear norm penalty) or a sparse prior (using an $\ell^1$ Lasso-type penalty). In our work, we concretely focus on the sparse case, but our theoretical treatment of the iOT could be extended to general structured convex regularization, along the lines of~\cite{vaiter2015model}. While not the focus of our paper, it is noteworthy that these works also propose efficient large-scale, non-smooth proximal solvers to optimize the penalized maximum likelihood functional, and we refer to~\cite{ma2020learning} for an efficient solver without inner loop calls to Sinkhorn's algorithm.
This approach was further refined in~\cite{stuart2020inverse}, deriving it from a Bayesian interpretation, enabling the use of MCMC methods to sample the posterior instead of optimizing a pointwise estimate (as we consider here). They also propose parameterizing cost functions as geodesic distances on graphs (while we consider only linear models to maintain convexity).
An important application of iOT to ML, explored by~\cite{li2019learning}, is to perform link prediction by solving new OT problems once the cost has been estimated from the observed couplings. Another category of ML application of iOT is representation learning (learning embeddings of data into, e.g., an Euclidean space) from pairwise interactions, as demonstrated by \cite{shi2023understanding}, which recasts contrastive learning as a specific instance of iOT.

\paragraph{Inverse problems and model selection.}

The iOT problem is formally a bilevel problem, as the observation model necessitates solving an OT problem as an inner-level program~\cite{colson2005bilevel} -- we refer to~\cite{eisenberger2022unified} for a recent numerical treatment of bilevel programming with entropic OT. The iOT can thus be conceptualized as an ``inverse optimization'' problem~\cite{zhang1996calculating,ahuja2001inverse}, but with a particularly favorable structure, allowing it to be recast as a convex optimization. This provides the foundation for a rigorous mathematical analysis of performance, as we propose in this paper.
The essence of our contributions is a theoretical examination of the recoverability of the OT cost from noisy observations, and particularly, the robustness to noise of the sparse support of the cost (for instance, viewed as a symmetric matrix for Mahalanobis norms). There exists a rich tradition of similar studies in the fields of inverse problem regularization and model selection in statistics. The most prominent examples are the sparsistency theory of the Lasso~\cite{tibshirani1996regression} (least square regularized by $\ell^1$), which culminated in the theory of compressed sensing~\cite{candes2006robust}. These theoretical results are predicated on a so-called ``irrepresentability condition''~\cite{zhao2006model}, which ensures the stability of the support. While our analysis is grounded in similar concepts (in particular, we identify the corresponding irrepresentability condition for the iOT), the iOT inverse problem is fundamentally distinct due to the differing observation model (it corresponds to a sampling process rather than the observation of a vector) and the estimation process necessitates solving a linear program of potentially infinite dimension (in the limit of a large number of samples). This mandates a novel proof strategy, which forms the core of our mathematical analysis.

\subsection{Contributions}
\label{sec:contrib}

This paper proposes the first mathematical analysis of the performance of regularized iOT estimation, focusing on the special case of sparse $\ell^1$ regularization (the $\ell^1$-iOT method).
We begin by deriving the customary ``irrepresentability condition'' of the iOT problem, rigorously proving that it is well-defined. This condition interweaves the properties of the Hessian of the maximum likelihood functional with the sparse support of the sought-after cost.
The main contribution of this paper is Theorem~\ref{thm:sparsistency}, which leverages this abstract irrepresentability condition to ensure sparsistency of the $\ell^1$-iOT method. This relates to the robust estimation of the cost and its support in some linear model, assuming the number $n$ of samples is large enough. Specifically, we demonstrate a sample complexity of $n^{-1/2}$.
Our subsequent sets of contributions are centered on the case of matching between samples of Gaussian distributions. Herein, we illustrate in Lemma~\ref{lem:hessian_formula} how to compute the irrepresentability condition in closed form. This facilitates the examination of how the parameters of the problem, particularly regularization strength and the covariance of the distributions, influence the success and stability of iOT. We further explore the limiting cases of small and large entropic regularization, revealing in Proposition~\ref{prop:gaussian_obj_lasso} and Proposition~\ref{prop:gaussian_obj_graph_lasso} that iOT interpolates between the graphical lasso (to estimate the graph structure of the precision matrix) and a classical lasso. This sheds light on the connection between iOT and graph estimation procedures.
Simple synthetic numerical explorations in Section~\ref{sec:numerics} further provide intuition about how $\epsilon$ and the geometry of the graph associated with a sparse cost impact sparsistency. As a minor numerical contribution, we present in Appendix~\ref{sec:solver} a large-scale $\ell^1$-iOT solver, implemented in JAX and distributed as open-source software.

\section{Inverse optimal transport}\label{iOT-Introductory section}
\newcommand{\sink}{\mathrm{Sink}}

\paragraph{The forward problem}
Given probability distributions $\alpha\in\Pp(\Xx)$, $\beta\in\Pp(\Yy)$ and cost function $c:\Xx\times\Yy\to \RR$,
the entropic optimal transport problem seeks to compute a coupling density
\begin{equation}\label{EntropicOT}
\sink(c,\epsilon)\eqdef 
    \argmax_{\pi\in\Uu(\alpha,\beta)} \dotp{c}{\pi} - \frac{\epsilon}{2}  \mathrm{KL}(\pi|\alpha\otimes \beta)
\quad\text{where}\quad
 \dotp{c}{\pi} \eqdef \int c(x,y) d \pi(x,y), 
\end{equation}
where $\kl(\pi|\xi)\eqdef \int \log(d\pi/d\xi)d\pi - \int d\pi$ and $\Uu(\alpha,\beta)$ is the space of all probability measures $\pi\in\Pp(\Xx\times\Yy)$ with marginals $\alpha,\beta$.

\paragraph{The inverse problem}
The inverse optimal transport problem seeks to recover the cost function $c$ given an approximation $\hat \pi_n$ of the probability coupling $\hat \pi \triangleq \sink(c,\epsilon)$. A typical setting is an empirical probability coupling  $\hat \pi_n = \frac{1}{n}\sum_{i=1}^n \delta_{(x_i,y_i)}$ where $(x_i,y_i)_{i=1}^n \overset{iid}{\sim}\hat \pi$. See Section~\ref{sec:finite_sample_main}.

\paragraph{The loss function}

The iOT problem has been proposed and studied in a series of papers, see Section~\ref{sec:contrib}. The approach is typically to  consider some linear parameterization 
\footnote{\RED{In this work, we restrict to linear parameterizations, although the same loss function can also be applied to learn costs with nonlinear parameterization, e.g. via neural networks \cite{ma2020learning}.}}
of the cost $c_A(x,y)$ by some parameter $A \in \RR^s$. The key observation of \cite{dupuy2016estimating} is that the negative log-likelihood of $\hat \pi$ at parameter value $A$ is given by
$$ 
    \Ll(A,\hat \pi) \eqdef -\dotp{c_A}{\hat \pi} + \RED{W_{\hat \pi}(A)}
    \qwhereq 
    \RED{W_{\hat \pi}(A)\eqdef \sup_{\pi\in\Uu(\hat \alpha,\hat \beta)} \dotp{c_A}{\pi} - \frac{\epsilon}{2}  \mathrm{KL}(\pi|\hat \alpha\otimes \hat \beta)},
$$
\RED{and $\hat \al$, $\hat\beta$ are the marginals of $\hat \pi$. For ease of notation, unless stated otherwise, we write $W = W_{\hat \pi}$.}
So, computing the maximum likelihood estimator $A$ for the cost corresponds to minimizing the \emph{convex} `loss function'  \RED{$A\mapsto \Ll(A,\hat \pi)$, which, by regarding $W_{\hat \pi}$ as a convex conjugate, can be seen as an instance of a Fenchel-Young loss, a family of losses proposed in \cite{blondel2020learning} }. 
We write the parameterization as
$$  
    \Phi: A\in \RR^s \mapsto c_A =  \sum_{j=1}^s A_j \cost_j, \qwhereq \cost_j \in \Cc(\Xx\times \Yy).
$$
A relevant example are quadratic loss functions, so that for $\Xx\subset \RR^{d_1}$, $\Yy \subset \RR^{d_2}$, given $A\in\RR^{d_1\times d_2}$,  $c_A(x,y) = x^\top A y$. 
In this case, $s = d_1d_2$ and for $k=(i,j)\in [d_1]\times [d_2]$, $\cost_k(x,y) = x_i y_j$.

\newcommand{\iot}{\mathrm{iOT-}\ell_1}
\paragraph{$\ell_1$-iOT}

To handle the presence of noisy data (typically coming from the sampling process), various regularization approaches have been proposed. In this work, we focus on the use of $\ell_1$-regularization \cite{carlier2023sista} to recover sparse parametrizations:
\begin{equation}\label{eq:primal}
    \argmin_A \Ff(A),\qwhereq \Ff(A)\eqdef \lambda\norm{A}_1 + \Ll(A,\hat \pi). 
    \tag{$\iot(\hat\pi)$}
\end{equation}

\paragraph{Kantorovich formulation}
Note that $W(A)$ is defined via a concave optimization problem and by Fenchel duality, one can show that  
 \eqref{eq:primal} has the following equivalent Kantorovich formulation \cite{carlier2023sista}:
\begin{equation}\label{eq:kt_inf}
\argmin_{A,f,g} \Kk(A,f,g), \qwhereq \Kk(A,f,g)\eqdef \Jj(A,f,g) + \lambda\norm{A}_1, \quad\text{and} \tag{$\Kk_\infty$}
\end{equation}
$$
\Jj(A,f,g) \eqdef -\int \pa{f(x)+g(y) +\Phi A(x,y)} d\pio(x,y) + \frac{\epsilon}{2} \int \exp\pa{\frac{2(f(x)+ g(y) + \Phi A(x,y))}{\epsilon}}d\alpha(x)d\beta(y).
$$
Based on this formulation, various algorithms have been proposed, including alternating minimization with proximal updates \cite{carlier2023sista}. 
Section~\ref{sec:solver} details a new large scale solver that we use for the numerical simulations. 


\subsection{Invariances and assumptions}\label{sec:invariances}

\begin{ass}
We first assume that $\Xx$ and $\Yy$ are compact. 
\end{ass}

Note that $\Jj$ has the translation invariance property that for any constant function $u$, $\Jj(f+u, g-u, A) = \Jj(f,g,A)$, so to remove this invariance, throughout, we restrict the optimization  of \eqref{eq:kt_inf}   to the set
\begin{equation}
    \Ss \eqdef \enscond{(A,f,g)\in \RR^s\times L^2(\al)\times L^2(\beta)}{\int g(y)d\beta(y) = 0}.
\end{equation}
Next, we make some assumptions on the cost to remove invariances in the iOT problem. 
\begin{ass}[Assumption on the cost]\label{ass:main}
\begin{itemize}
\item[(i)]$\EE_{(x,y)\sim\alpha\otimes \beta} [\cost(x,y) \cost(x,y)^\top  ]\succeq \Id$ is invertible.

 \item[(ii)] $\norm{\cost(x,y)}\leq 1$ for $\alpha$ almost every $x$ and $\beta$-almost every $y$. 
\item[(iii)] for all $k$, $\int \cost_k(x,y)\mathrm{d}\al(x) = 0$ for $\beta$-a.e. $y$ and $\int \cost_k(x,y)\mathrm{d}\beta(y) = 0$ for $\al$-a.e. $x$.

\end{itemize}
\end{ass}
Under these assumptions, it can be shown that iOT has a unique solution (see remark after Proposition \ref{InverseOTLossFuncPropertiesProp}).
 Assumption \ref{ass:main} (i) is to ensure that $c_A = \Phi A$ is uniquely determined by $A$.
Assumption \ref{ass:main} (ii) is without loss of generality, since we assume that $\al,\beta$ are compactly supported, so this holds up to a rescaling of the space.
Assumption \ref{ass:main}  (iii) is to handle the invariances pointed out in \cite{carlier2023sista} and \cite{ma2020learning}:
\begin{equation}\label{eq:invariance_cost}
    \Jj(A,f,g) = \Jj(A',f',g') \iff c_A + (f\oplus g) = c_{A'} +(f'\oplus g').
\end{equation}

 As observed in \cite{carlier2023sista}, 
 any cost can be adjusted to fit this assumption: one can define
$$
\tilde \cost_k(x,y) = \cost_k(x,y) - u_k(x) - v_k(y)
$$
where
$
u_k(x) = \int \cost_k(x,y)d\beta(y)\qandq v_k(y)=\int \cost_k(x,y) d\alpha(x) - \int \cost_k(x,y)d\alpha(x)d\beta(y).
$
Letting $\tilde \Phi A = \sum_k A_k \tilde \cost_k$, we have
$
\pa{\pa{f- \sum_k A_k u_k}\oplus \pa{ g - \sum_k A_k v_k}} + \tilde \Phi A = (f\oplus g )+ \Phi A.
$ So optimization with the parametrization $\Phi$ is equivalent to optimization with $\tilde \Phi$.

NB: For the quadratic cost $c_k(x,y) = x_i y_j$ for $k=(i,j)$, condition (iii) corresponds to \emph{recentering} the data points, and taking $x \mapsto x - \int x d\alpha(x)$ and $y \mapsto y-\int y d\beta(y)$. Condition (ii) holds if $\norm{x} \vee \norm{y} \leq 1$ for $\al$-a.e. $x$ and $\beta$-a.e. $y$. Condition (i) corresponds to invertibility of $\EE_\al[x x^\top]\otimes \EE_\beta[y y^\top]$.

\RED{
\subsection{The finite sample problem} \label{sec:finite_sample_main}

In practice, we do not observe $\hat \pi$ but  $n$ data-points $(x_i,y_i) \overset{iid}{\sim} \hat \pi$ for $i=1,\ldots, n$, where $ \hat \pi = \sink(c_\Asoln, \epsilon)$. To recover $\Asoln$,  we plug into  $\iot$ the empirical measure $\hat \pi_n = \frac1n\sum_{i=1}^n \delta_{x_i,y_i}$ and  consider the estimator
\begin{equation}\label{eq:fin_dim_iot}
    A_n \in \argmin_A \lambda\norm{A}_1+ \Ll(A, \hat \pi_n) \tag{$\iot(\hat\pi_n)$} ,
\end{equation}
  As in section \ref{sec:invariances}, to account for the invariance \eqref{eq:invariance_cost}, when solving \eqref{eq:fin_dim_iot}, we again centre the cost parameterization such that for all $i$, $\sum_{i=1}^n \cost_k(x_i,y_j) = 0$ and for all $j$, $\sum_{j=1}^n \cost_k(x_i,y_j) = 0$.  Note also that $\iot(\hat \pi_n)$ can be formulated entirely in finite dimensions. See Appendix \ref{sec:finite_sample} for details.
}

\section{The certificate for sparsistency}\label{sec:duality}
\RED{In this section, we consider the problem \eqref{eq:primal} with full data $\hat \pi$ and present a sufficient condition for support recovery, that we term \emph{non-degeneracy of the certificate}. For simplicity of notation, throughout this section denote $W \eqdef W_{\hat \pi}$.} Under non-degeneracy of the certificate we obtain support recovery as stated in Theorem \ref{certificateTheorem}, a known result whose proof can be found \emph{e.g.} in \cite{lee2015model}. This condition can be seen as a generalization of the celebrated \emph{Lasso's Irrepresentability condition} (see \emph{e.g.} \cite{hastie2015statistical}) -- Lasso corresponds to having a quadratic loss instead of $\Ll(A,\hat \pi)$, thus $\nabla^2_A W(A)$ in the definition below reduces to a matrix. In what follows, we denote $u_I \eqdef (u_i)_{i \in I}$ and $U_{I,J} \eqdef (U_{i,j})_{i \in I, j \in J}$ the restriction operators. 

\begin{defn}
The \emph{certificate} with respect to  $A$ and  support $I=\{i : A_i \neq 0\}$ is 
\begin{align}\label{FuchsCert}\tag{C}
z^\ast_A
\eqdef 
\nabla^2W(A)_{(:,I)}(\nabla^2W(A)_{(I,I)})^{-1} \sign(A)_I.
\end{align}
We say that it is non-degenerate if $\| (z^\ast_{A})_{I^c}\|_\infty<1$.
\end{defn}

The next proposition, whose proof can be found in Appendix \ref{ProofOfHessianFormSect}, shows that the function $W(A)$ is twice differentiable, thus ensuring that \ref{FuchsCert} is well defined.

\begin{prop}\label{InverseOTLossFuncPropertiesProp}
$A \mapsto W(A)$ is twice differentiable, strictly convex, with  gradient and Hessian 
\begin{equation*}
\nabla_A W(A) =\Phi^\ast \pi_A, 
\quad 
\nabla_A^2 W(A) =\Phi^\ast \frac{\partial \pi_A}{\partial A}(x,y)
\end{equation*}
where $\pi_A$ is the unique solution to~\eqref{EntropicOT} with cost $c_A=\Phi A$.
\end{prop}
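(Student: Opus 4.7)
The plan is to work from the variational representation
\[
W(A) = \sup_{\pi \in \Uu(\hat\alpha,\hat\beta)} \langle \Phi A,\pi\rangle - \tfrac{\epsilon}{2}\kl(\pi \vert \hat\alpha\otimes\hat\beta).
\]
Since the inner objective is affine in $A$ for each fixed $\pi$, $W$ is convex as a supremum of affine functions; strict concavity of the KL penalty on the coupling polytope makes the maximizer $\pi_A$ unique. Danskin's envelope theorem then gives at once that $W$ is differentiable with $\nabla_A W(A) = \Phi^\ast \pi_A$, where $(\Phi^\ast \pi)_k = \int \cost_k\, d\pi$.

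To obtain twice-differentiability and the Hessian formula, I would prove that $A \mapsto \pi_A$ is itself $C^1$. By strong duality the Schrödinger parameterization $\pi_A = \exp\bigl(\tfrac{2}{\epsilon}(f_A \oplus g_A + \Phi A)\bigr)\,\hat\alpha\otimes\hat\beta$ holds, with potentials $(f_A, g_A)$ uniquely characterized, once restricted to $\Ss$, by the marginal equations $\int \pi_A(\cdot, y)\, d\hat\beta(y) = \hat\alpha$ and $\int \pi_A(x, \cdot)\, d\hat\alpha(x) = \hat\beta$. Viewing these as a smooth map $F(A, f, g)=0$ on $\RR^s \times L^2(\hat\alpha) \times L^2_0(\hat\beta)$, I would apply the implicit function theorem. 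A direct computation shows that $D_{(f,g)}F$ takes the form $\mathrm{Id} - K$, where $K$ is a self-adjoint integral operator built from conditional expectations under $\pi_A$; boundedness of the cost (Assumption \ref{ass:main}(ii)) and compactness of $\Xx,\Yy$ make $K$ compact, while strict convexity of KL together with the quotient imposed by $\Ss$ rules out nontrivial kernel elements, so the Fredholm alternative yields invertibility. Hence $\pi_A$ is $C^1$ in $A$, and differentiating $\nabla_A W(A) = \Phi^\ast \pi_A$ produces $\nabla_A^2 W(A) = \Phi^\ast \partial_A \pi_A$.

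Strict convexity then follows by computing the Hessian quadratic form along $v \in \RR^s\setminus\{0\}$. Writing $\partial_A \pi_A[v] = \tfrac{2}{\epsilon}(\phi_v \oplus \psi_v + \Phi v)\pi_A$ for the linearized potentials $(\phi_v,\psi_v)$, using that $\partial_A \pi_A[v]$ has zero marginals, and recognizing that the self-consistency equations for $(\phi_v,\psi_v)$ are exactly those defining the $L^2(\pi_A)$-projection $\Pi_{\mathrm{sep}}$ onto additively separable functions, a short manipulation yields
\[
v^\top \nabla^2 W(A) v = \tfrac{2}{\epsilon}\, \bigl\|\Phi v - \Pi_{\mathrm{sep}}(\Phi v)\bigr\|^2_{L^2(\pi_A)}.
\]
This vanishes iff $\Phi v$ is $\hat\alpha\otimes\hat\beta$-a.e.\ additively separable; Assumption \ref{ass:main}(iii) forces $\Phi v$ to then be constant, integrating against each $\cost_k$ yields $\EE[\cost\cost^\top] v = 0$, and Assumption \ref{ass:main}(i) finally forces $v = 0$. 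Thus $\nabla^2 W(A)\succ 0$ and $W$ is strictly convex.

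The principal obstacle is the infinite-dimensional implicit function argument: identifying the correct Banach spaces and proving that the Schrödinger linearization $D_{(f,g)}F$ is an isomorphism on the quotient modulo additive constants. Once this invertibility is in hand, convexity, the gradient and Hessian formulas, and strict convexity all reduce to routine differentiation of the Sinkhorn representation combined with the structural conditions in Assumption \ref{ass:main}.
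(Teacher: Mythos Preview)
Your proposal is correct and follows the same overall strategy as the paper: the envelope theorem for $\nabla W$, and the implicit function theorem on the Schr\"odinger potential equations to obtain $C^1$ dependence of $\pi_A$ on $A$. Two technical choices differ. For invertibility of $D_{(f,g)}F$, the paper bypasses compactness and instead computes directly that $\langle (f,g), D_{(f,g)}F(f,g)\rangle = \int (f\oplus g)^2\,d\pi_A$, then uses the uniform lower bound on the density $\pi_A$ (available because $\Xx,\Yy$ are compact and the cost is bounded) to get the coercivity estimate $\|D_{(f,g)}F(f,g)\|\geq C\|(f,g)\|$; your Fredholm route is equally valid and arguably cleaner, since bounded invertibility comes for free once injectivity is shown. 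For strict convexity, the paper simply cites \cite{dupuy2014personality}, whereas you give a self-contained argument via the explicit identity $v^\top \nabla^2 W(A)\,v = \tfrac{2}{\epsilon}\|\Phi v - \Pi_{\mathrm{sep}}(\Phi v)\|_{L^2(\pi_A)}^2$ and then invoke Assumptions~\ref{ass:main}(i)(iii); this formula is correct and is a nice addition, since it makes transparent exactly where the centering and injectivity hypotheses on the dictionary enter.
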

We remark that strict convexity of $W$ implies that any solution of \eqref{eq:primal} must be unique, and by $\Gamma$-convergence, solutions $A^\lambda$ to \eqref{eq:primal} converge to $\hat A$ as $\lambda \to 0$. 
The next theorem, a well-known result (see \cite[Theorem 3.4]{lee2015model} for a proof), shows that support recovery can be characterized via $z^*_{\Asoln}$. 

\begin{thm}\label{certificateTheorem}
Let $\hat \pi = \sink(c_\Asoln,\epsilon)$. If $z^\ast_{\Asoln}$ is non-degenerate, then for all $\lambda$ sufficiently small,  the solution $A^\lambda$ to  \eqref{eq:primal} is sparsistent with $\norm{A^\lambda - \Asoln} = \Oo(\lambda)$ and
 $\Supp\pa{A^\lambda}=\Supp(\Asoln)$.
\end{thm}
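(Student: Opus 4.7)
The plan is to follow the classical Fuchs-style / primal-dual certificate argument adapted from the Lasso literature. The proof splits into two parts: (1) construct a candidate solution $\tilde A^\lambda$ by solving the problem restricted to the correct support, and (2) certify via the subdifferential of $\ell_1$ that this candidate is in fact the global minimizer, using the non-degeneracy of $z^\ast_{\hat A}$.

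For the first step, let $I = \mathrm{Supp}(\hat A)$ and $s_I = \sign(\hat A)_I$. Consider the restricted convex program
\[
\tilde A^\lambda \in \argmin_{A : A_{I^c}=0}\; \lambda\|A\|_1 + \Ll(A,\hat\pi).
\]
By Proposition~\ref{InverseOTLossFuncPropertiesProp} the loss is strictly convex, so the minimizer is unique, and at $\lambda=0$ the unique minimizer is $\hat A$ (since $\nabla_A \Ll(\hat A,\hat\pi) = -\Phi^\ast \hat\pi + \Phi^\ast \pi_{\hat A} = 0$ because $\hat\pi = \mathrm{Sink}(c_{\hat A},\epsilon) = \pi_{\hat A}$, and $\hat A$ is already supported in $I$ with the matching sign pattern). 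Because $\hat A_I$ has all coordinates strictly nonzero with signs $s_I$, continuity gives $\sign(\tilde A^\lambda)_I = s_I$ for all $\lambda$ small enough. The first order optimality condition on $I$ then reads
\[
\nabla_{A_I} W(\tilde A^\lambda) - \nabla_{A_I} W(\hat A) = -\lambda s_I.
\]
Using the twice differentiability of $W$ and invertibility of $H_{II} := \nabla^2 W(\hat A)_{I,I}$ (which follows from strict convexity of $W$), either the implicit function theorem or a direct Taylor expansion yields
\[
(\tilde A^\lambda - \hat A)_I = -\lambda\, H_{II}^{-1} s_I + o(\lambda),
\qquad (\tilde A^\lambda - \hat A)_{I^c} = 0,
\]
so $\|\tilde A^\lambda - \hat A\| = \Oo(\lambda)$.

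For the second step, to show $\tilde A^\lambda$ is a global minimizer of $\Ff$, we need a subgradient $z \in \partial \|\tilde A^\lambda\|_1$ with $\nabla_A \Ll(\tilde A^\lambda,\hat\pi) + \lambda z = 0$. On $I$ this is automatic by the restricted optimality above with $z_I = s_I$. On $I^c$ we define $z_{I^c} = -\lambda^{-1}\bigl(\nabla_A \Ll(\tilde A^\lambda,\hat\pi)\bigr)_{I^c}$ and must verify $\|z_{I^c}\|_\infty < 1$. Since $\nabla_A \Ll(A,\hat\pi) = -\Phi^\ast\hat\pi + \nabla W(A)$ and $\nabla W(\hat A) = \Phi^\ast\hat\pi$, a Taylor expansion gives
\[
\bigl(\nabla_A \Ll(\tilde A^\lambda,\hat\pi)\bigr)_{I^c} = \nabla^2 W(\hat A)_{I^c,I}\,(\tilde A^\lambda - \hat A)_I + o(\lambda) = -\lambda\,\nabla^2 W(\hat A)_{I^c,I}\, H_{II}^{-1} s_I + o(\lambda).
\]
Substituting the definition of $z^\ast_{\hat A}$ from \eqref{FuchsCert}, this yields $z_{I^c} = (z^\ast_{\hat A})_{I^c} + o(1)$ as $\lambda \to 0$. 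The non-degeneracy assumption $\|(z^\ast_{\hat A})_{I^c}\|_\infty < 1$ then ensures $\|z_{I^c}\|_\infty < 1$ for all sufficiently small $\lambda$, so $z$ is a valid $\ell_1$-subgradient at $\tilde A^\lambda$. Strict convexity of $\Ll$ implies uniqueness of the global minimizer of $\Ff$, hence $A^\lambda = \tilde A^\lambda$, which gives both $\mathrm{Supp}(A^\lambda) = I = \mathrm{Supp}(\hat A)$ and the rate $\|A^\lambda - \hat A\| = \Oo(\lambda)$.

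The main technical point is the Taylor-expansion step: turning the twice-differentiability of $W$ given by Proposition~\ref{InverseOTLossFuncPropertiesProp} into a genuine first-order expansion of $\nabla W$ around $\hat A$ uniformly in a neighborhood, and confirming that $H_{II}$ is strictly positive definite (so invertible). Both follow from strict convexity and the continuity of the Hessian established in Proposition~\ref{InverseOTLossFuncPropertiesProp}; everything else is routine convex-analytic bookkeeping. Since the result is stated as well-known and a reference to \cite{lee2015model} is provided, the proof should be presented as essentially a specialization of that general model-consistency argument to the iOT loss $\Ll(\cdot,\hat\pi)$.
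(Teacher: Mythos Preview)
Your proposal is correct and follows the standard primal--dual witness argument that underlies \cite[Theorem 3.4]{lee2015model}, which is exactly the reference the paper defers to rather than giving its own proof. The paper treats this theorem as a known result and does not supply a proof, so there is nothing further to compare; your write-up is essentially a clean specialization of that cited argument to the iOT loss.
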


\RED{
\subsection{Intuition behind the certificate \eqref{FuchsCert}}\label{sec:intuition-cert}

\paragraph{Implication of the non-degeneracy condition}
The non-degeneracy condition is widely studied for the Lasso problem \cite{hastie2015statistical}. Just as for the Lasso, the closer $\abs{(z^*_{\Asoln})_i}$ is to 1, the more unstable is this coefficient, and if $\abs{(z^*_{\Asoln})_i}>1$ then one cannot expect to recover this coefficients when there is noise. Note also that in the Lasso, the pre-certificate formula in \eqref{FuchsCert} roughly the \emph{correlation} between coefficient inside and outside the support $I$.  In our case, the loss is more complex and this \emph{correlation} is measured according to the hessian of $W$, which integrates the curvature of the loss. This curvature formula is however involved, so to gain intuition, we perform a detailed analysis in the $\epsilon\to 0$ and $\eps->\infty$ for the Gaussian setting where it becomes much simpler (see Section \ref{sec:limit_cases}).

\paragraph{Link to optimality conditions}
The certificate $z^\ast_{\Asoln}$ can be seen as the \emph{limit optimality condition}  for the optimization problem \eqref{eq:primal} as $\lambda\to 0$: by the first order optimality condition to  \eqref{eq:primal},  $A^\lambda$ is a solution if and only if $z^\lambda \eqdef  - \frac{1}{\lambda}\nabla L(A^\lambda) \in \partial \norm{A_\lambda}_1,$ where the subdifferential for the $\ell_1$ norm has the explicit form  $\partial \norm{A}_1 = \enscond{z}{\norm{z}_\infty \leq 1, \; \forall i  \in\Supp(A), z_i = \sign(A_i)}$. It follows $z^\lambda$ can be seen as a \emph{certificate} for the support of  $A^\lambda$ since $\Supp(A^\lambda)\subseteq \enscond{i}{\abs{z^\lambda_i}=1}$. To study the support behavior of $A^\lambda$ for small $\lambda$, it is therefore interesting to consider the limit of $z^\lambda$ as $\lambda \to 0$. Its limit is precisely the subdifferential element with the minimal norm and coincides with \eqref{FuchsCert} under the nondegeneracy condition:
\begin{prop}\label{MinimalNormCertProposi}
Let $z^\lambda \eqdef  - \frac{1}{\lambda}\nabla L(A^\lambda)$ where $A^\lambda$ solves \eqref{eq:primal}. Then,
\begin{equation}\label{MnCertificate}\tag{MNC}
\lim_{\lambda\to 0} z^\lambda= z^{\min}_{\Asoln}\eqdef
\argmin_{z} \enscond{ \dotp{z}{\pa{\nabla^2 W(\Asoln}^{-1} z}_F}{z\in \partial\|\Asoln\|_1}
\end{equation}
Moreover, if $z^*_{\Asoln}$ is non-degenerate, then $z^{\min}_{\Asoln} = z^*_{\Asoln}$.
\end{prop}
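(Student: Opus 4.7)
The plan is to exploit the first-order optimality condition at $A^\lambda$, rescale by $\lambda$, identify the $\Gamma$-limit of the rescaled objective, and read off $z^{\min}_{\Asoln}$ via Fenchel duality. Two elementary facts come first. Since $\hat\pi=\sink(c_{\Asoln},\epsilon)$, Proposition~\ref{InverseOTLossFuncPropertiesProp} gives $\nabla_A L(\Asoln,\hat\pi)=\Phi^\ast(\pi_{\Asoln}-\hat\pi)=0$, and $H\eqdef\nabla^2 W(\Asoln)$ is positive definite. The $\Gamma$-convergence remark just before the proposition gives $A^\lambda\to\Asoln$, while the first-order condition reads $z^\lambda=-\nabla L(A^\lambda)/\lambda\in\partial\|A^\lambda\|_1$. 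Setting $D^\lambda\eqdef(A^\lambda-\Asoln)/\lambda$, a Taylor expansion and $\nabla L(\Asoln)=0$ give
$$ z^\lambda \;=\; -H D^\lambda + O(\lambda\|D^\lambda\|^2), $$
which combined with $\|z^\lambda\|_\infty\le 1$ and $H\succ 0$ forces $\|D^\lambda\|=O(1)$, hence $\|A^\lambda-\Asoln\|=O(\lambda)$.

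Writing $I\eqdef\{i:\Asoln_i\neq 0\}$, for $\lambda$ small (and $D$ in a bounded set) one has $\|\Asoln+\lambda D\|_1-\|\Asoln\|_1=\lambda\langle\sign(\Asoln_I),D_I\rangle+\lambda\|D_{I^c}\|_1$, so a second-order expansion of $L$ around $\Asoln$ yields
$$ \frac{\Ff(\Asoln+\lambda D)-\Ff(\Asoln)}{\lambda^2} \;\xrightarrow[\lambda\to 0]{}\; G(D)\eqdef\langle\sign(\Asoln_I),D_I\rangle+\|D_{I^c}\|_1+\tfrac12 D^\top H D. $$
Coercivity of $G$ (from $H\succ 0$) together with boundedness of $D^\lambda$ and a standard convex $\Gamma$-convergence argument forces $D^\lambda\to D^\ast\eqdef\argmin G$, and hence $z^\lambda\to z^\ast\eqdef -HD^\ast$. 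By construction $z^\ast_I=\sign(\Asoln_I)$ and $z^\ast_{I^c}\in\partial\|D^\ast_{I^c}\|_1\subseteq[-1,1]^{|I^c|}$, so $z^\ast\in\partial\|\Asoln\|_1$. Fenchel--Rockafellar duality, using $\tfrac12 D^\top H D=\sup_{\tilde z}\{\tilde z^\top D-\tfrac12 \tilde z^\top H^{-1}\tilde z\}$ and swapping inf and sup, then gives
$$ \min_D G(D)\;=\;-\min\bigl\{\tfrac12\tilde z^\top H^{-1}\tilde z \;:\; \tilde z\in\partial\|\Asoln\|_1\bigr\}, $$
with optimal $\tilde z=-HD^\ast=z^\ast$; this identifies $z^\ast=z^{\min}_{\Asoln}$.

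For the non-degenerate case I would parametrize $\tilde z_I=\sign(\Asoln_I)$ and minimize $\tilde z^\top H^{-1}\tilde z$ over $\tilde z_{I^c}\in[-1,1]^{|I^c|}$. A Schur-complement identity yields $-(H^{-1})_{I^cI^c}^{-1}(H^{-1})_{I^cI}=H_{I^cI}H_{II}^{-1}$, so the unconstrained minimizer in $\tilde z_{I^c}$ equals $H_{I^cI}H_{II}^{-1}\sign(\Asoln_I)=(z^\ast_{\Asoln})_{I^c}$. Non-degeneracy $\|(z^\ast_{\Asoln})_{I^c}\|_\infty<1$ places this critical point strictly inside the box, so it is also the constrained minimizer, giving $z^{\min}_{\Asoln}=z^\ast_{\Asoln}$. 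I expect the main obstacle to be making the $\Gamma$-convergence / equicoercivity step rigorous --- controlling the Taylor remainder of $L$ uniformly on a $\lambda$-independent neighborhood of $\Asoln$ and handling the kink of the $\ell_1$ norm near $I$ --- while the duality and Schur-complement computations that follow are routine.
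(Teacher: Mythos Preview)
Your argument is correct and takes a genuinely different route from the paper. The paper works entirely on the \emph{dual} side: it invokes the Fenchel dual \eqref{dualOfiOT}, so that $z^\lambda$ is itself the minimizer of $z\mapsto W^*(\hat\Sigma_{xy}-\lambda z)$ over $\{\|z\|_\infty\le 1\}$, and then Taylor-expands $W^*$ around $\hat\Sigma_{xy}$ to identify the limit as the $H^{-1}$-norm projection onto $\partial\|\Asoln\|_1$. You instead stay on the \emph{primal} side: you rescale $A^\lambda$ around $\Asoln$, $\Gamma$-converge the blown-up primal objective to the quadratic-plus-support-function model $G$, and only at the end pass to the dual of $G$ to recognise $z^{\min}_{\Asoln}$. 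Your approach avoids introducing the dual problem for iOT and makes the link $z^\ast=-HD^\ast$ very transparent; the paper's approach avoids the $\Gamma$-convergence/equicoercivity bookkeeping and never needs to control $D^\lambda$ at all, since boundedness of $z^\lambda$ is immediate from the dual constraint.

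Two small technical remarks. First, your bootstrap $\|D^\lambda\|=O(1)$ from $z^\lambda=-HD^\lambda+O(\lambda\|D^\lambda\|^2)$ is slightly circular as written; cleaner is to write $z^\lambda=-\bar H_\lambda D^\lambda$ exactly with $\bar H_\lambda=\int_0^1\nabla^2 L(\Asoln+t(A^\lambda-\Asoln))\,dt\to H$, which immediately gives the bound. Second, your Schur-complement treatment of the non-degenerate case is more explicit than the paper's (which simply notes that non-degeneracy makes the box constraints inactive, reducing to an equality-constrained quadratic); both are fine.
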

}

\section{Sample complexity bounds}
Our main contribution shows that \eqref{FuchsCert}  is a certificate for sparsistency under sampling noise: 
\begin{thm}\label{thm:sparsistency}
Let $\hat \pi = \sink(c_\Asoln,\epsilon)$.
Suppose that the certificate $z_{\Asoln}^*$ is non-degenerate. Let $\delta>0$. Then,  for all sufficiently small  regularization parameters $\lambda$ and sufficiently many number of samples $n$,
$$
\lambda \lesssim 1\qandq 
\max\pa{ {\exp(C\norm{\Asoln}_1/\epsilon)  \sqrt{\log(1/\delta)}}\lambda^{-1},\sqrt{\log(2s)}} \lesssim \sqrt{n},
$$
for some constant $C>0$, with probability at least $1- \delta$, the minimizer $A_n$ to \eqref{eq:primal_n} is sparsistent with $\Asoln$ with $\mathrm{Supp}(A_n) = \mathrm{Supp}(\Asoln)$ and  $\norm{A_n - \Asoln}_2 \lesssim \lambda +  \sqrt{\exp(C\norm{\Asoln}_1/\epsilon) \log(1/\delta) n^{-1}}$.
\end{thm}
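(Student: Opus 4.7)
The plan is to follow the primal-dual witness construction that underlies Theorem~\ref{certificateTheorem} (see \cite{lee2015model}) and quantify every step to track sampling noise. First I would introduce the \emph{restricted} estimator
$$
\tilde A_I \in \argmin_{A_I \in \R^{|I|}} \lambda\|A_I\|_1 + \Ll((A_I,0),\hat\pi_n),\qquad I=\mathrm{Supp}(\Asoln),
$$
which is unique because $W_{\hat\pi_n}$ is strictly convex (Proposition~\ref{InverseOTLossFuncPropertiesProp} applied to the empirical marginals). The goal is to show, with probability at least $1-\delta$, that (i) $\mathrm{sign}(\tilde A_I)=\mathrm{sign}(\Asoln_I)$ and (ii) the candidate dual variable $\tilde z \eqdef -\lambda^{-1}\nabla_A\Ll((\tilde A_I,0),\hat\pi_n)$ satisfies $\|\tilde z_{I^c}\|_\infty<1$. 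Standard convex analysis then guarantees that the zero-extension of $\tilde A_I$ is the unique minimizer of \eqref{eq:fin_dim_iot} with support exactly $I$, which is the sparsistency claim.

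For (i) and (ii) the key tool is a Taylor expansion. The first-order condition at $\tilde A$ on the restricted problem gives $\nabla\Ll(\tilde A,\hat\pi_n)_I+\lambda\,\mathrm{sign}(\tilde A_I)=0$. Since $\nabla^2\Ll(\cdot,\hat\pi_n)=\nabla^2 W_{\hat\pi_n}(\cdot)$ (the $\langle c_A,\hat\pi_n\rangle$ term being linear in $A$), expanding around $\Asoln$ yields
$$
\tilde A_I-\Asoln_I = -(H_n)_{I,I}^{-1}\bigl(\nabla\Ll(\Asoln,\hat\pi_n)_I + \lambda\,\mathrm{sign}(\Asoln_I)\bigr),
$$
with $H_n$ the Hessian of $W_{\hat\pi_n}$ evaluated on the segment $[\Asoln,\tilde A]$. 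Plugging this into the $I^c$-block decomposes $\tilde z_{I^c} = \Pi_n\,\mathrm{sign}(\Asoln_I) + r_n$, where $\Pi_n\eqdef (H_n)_{I^c,I}(H_n)_{I,I}^{-1}$ and $r_n$ is linear in $\lambda^{-1}\nabla\Ll(\Asoln,\hat\pi_n)$. Comparing with \eqref{FuchsCert}, the principal term converges to $z^*_{\Asoln,I^c}$, whose $\ell_\infty$-norm is bounded away from $1$ by hypothesis; it remains to show that the residual $r_n$ and the Hessian perturbation $\Pi_n-\Pi^\star$ are small with high probability.

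Concentration reduces to two empirical-process bounds. Since by Proposition~\ref{InverseOTLossFuncPropertiesProp} the population gradient $\nabla\Ll(\Asoln,\hat\pi)=\Phi^*(\hat\pi-\hat\pi)=0$, I would write
$$
\nabla\Ll(\Asoln,\hat\pi_n) = \Phi^*(\hat\pi-\hat\pi_n) + \Phi^*(\pi_n^{\Asoln}-\hat\pi),\qquad \pi_n^{\Asoln}\eqdef \sink(c_{\Asoln},\epsilon)\ \text{on}\ \hat\alpha_n,\hat\beta_n.
$$
The first piece is a mean of i.i.d.\ vectors uniformly bounded by $\|\cost_k\|_\infty\le 1$ (Assumption~\ref{ass:main}), so a coordinatewise Hoeffding/union bound gives an $\Oo(\sqrt{(\log(2s)+\log(1/\delta))/n})$ control in $\ell_\infty$. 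For the second piece I would use the explicit exponential form of the Sinkhorn density: $d\pi_n^{\Asoln}/d(\hat\alpha_n\otimes\hat\beta_n) = \exp(2(f_n+g_n+c_{\Asoln})/\epsilon)$, and classical Sinkhorn potential estimates combined with $\|c_{\Asoln}\|_\infty\le\|\Asoln\|_1$ bound this density by $\exp(C\|\Asoln\|_1/\epsilon)$. Viewing $\Phi^*(\pi_n^{\Asoln}-\hat\pi)$ as an empirical process whose integrand is scaled by this density then produces a Hoeffding bound carrying the multiplicative factor $\exp(C\|\Asoln\|_1/\epsilon)$. An analogous argument, based on the Hessian formula of Proposition~\ref{InverseOTLossFuncPropertiesProp} (itself an integral against $\partial\pi_A/\partial A$), controls $\|H_n-H^\star\|$.

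The main obstacle is precisely this second concentration step: $\pi_n^{\Asoln}$ is supported on the empirical grid, so one cannot directly apply a Hoeffding bound against $\hat\pi$. The plan is to leverage multiplicative stability of Sinkhorn: the densities $d\pi_n^{\Asoln}/d(\hat\alpha_n\otimes\hat\beta_n)$ and $d\hat\pi/d(\alpha\otimes\beta)$ share the same exponential parametrization up to the optimal potentials and are both bounded by $\exp(C\|\Asoln\|_1/\epsilon)$; this allows marginal concentration of $\hat\pi_n$ to be transported to concentration of the Sinkhorn coupling, and is the origin of the $\exp(C\|\Asoln\|_1/\epsilon)$ factor in the sample complexity. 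Once all concentration bounds are collected, choosing $\lambda$ large enough that $\lambda\gtrsim\|\nabla\Ll(\Asoln,\hat\pi_n)\|_\infty$ (so $r_n=o(1)$ and the nondegeneracy margin of \eqref{FuchsCert} is preserved) but small enough that $\lambda\ll\min_{i\in I}|\Asoln_i|$ (so $\mathrm{sign}(\tilde A_I)=\mathrm{sign}(\Asoln_I)$) closes the primal-dual witness and yields both $\mathrm{Supp}(A_n)=\mathrm{Supp}(\Asoln)$ and the claimed $\ell_2$ error bound with the additional $\exp(C\|\Asoln\|_1/\epsilon)\sqrt{\log(1/\delta)/n}$ term.
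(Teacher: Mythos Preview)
Your proposal follows the classical primal--dual witness (PDW) template: introduce a restricted estimator on the true support, Taylor-expand the first-order condition around $\Asoln$, and verify strict dual feasibility off-support. This is \emph{not} the route the paper takes, and the difference is more than cosmetic.

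The paper never introduces a restricted estimator, never Taylor-expands, and never needs to control the Hessian perturbation $H_n-H^\star$. Instead, for each fixed small $\lambda$ it compares the certificate $z_n=\lambda^{-1}\Phi_n^*(P_n-\hat P_n)$ attached to the \emph{unrestricted} empirical minimizer $A_n$ directly to the population certificate $z_\infty=\lambda^{-1}\Phi^*(p_\infty\alpha\otimes\beta-\hat\pi)$ attached to the population $\lambda$-regularized minimizer $A_\infty$ (not $\Asoln$). The bound on $\|z_n-z_\infty\|_\infty$ (Proposition~\ref{prop:sample_complexity}) is obtained by lifting to the Kantorovich dual in the full variable $(A,F,G)$ and using strong convexity of $\Jj_n$ there. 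This reduces the whole question to bounding $\nabla\Jj_n$ at the point $(A_\infty,\,f_\infty|_{\{x_i\}},\,g_\infty|_{\{y_j\}})$, the restriction of the \emph{fixed deterministic} population solution to the sample grid. That gradient decomposes into marginal errors and cost averages (Propositions~\ref{prop:concentration_marginal}, \ref{prop:concentration_cost1}, \ref{prop:concentration_cost}), each of which is an honest empirical process because $f_\infty,g_\infty,p_\infty$ do not depend on the sample. Nondegeneracy of $z_\infty$ (Proposition~\ref{MinimalNormCertProposi}) then transfers to $z_n$, forcing $\Supp(A_n)\subset\{i:|(z_n)_i|=1\}\subset I$.

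Your route has a genuine gap exactly where you flag the ``main obstacle''. Both $\Phi^*(\pi_n^{\Asoln}-\hat\pi)$ and the Hessian $H_n=\nabla^2 W_{\hat\pi_n}$ depend on the \emph{empirical} Sinkhorn potentials, which are nonlinear functionals of the whole sample and are not i.i.d.\ sums. Saying that the Sinkhorn densities are uniformly bounded by $\exp(C\|\Asoln\|_1/\epsilon)$ is correct but does not by itself give concentration of the difference $\pi_n^{\Asoln}-\hat\pi$, let alone of $\partial_A\pi_n^A$ (which is what $H_n$ involves). Your fix---``multiplicative stability of Sinkhorn allows marginal concentration to be transported to coupling concentration''---is a slogan, not an argument; making it rigorous would essentially require re-deriving the sample complexity of entropic OT and of its $A$-derivative, which is substantial additional work you have not sketched. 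The paper's lift to the $(A,F,G)$-Kantorovich dual is precisely designed to avoid this: by carrying the potentials as explicit optimization variables, the implicit Sinkhorn map and its derivative never have to be analyzed separately, and all the data-dependence sits in quantities evaluated at the fixed population potentials.
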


\paragraph{Main idea behind Theorem \ref{thm:sparsistency}}

We know from Theorem \ref{certificateTheorem} that there is some $\lambda_0>0 $ such that for all $\lambda \leq \lambda_0$, the solution to \eqref{eq:kt_inf} has the same support as $\Asoln$. 
To show that the finite sample problem also recovers the support of $\Asoln$ when $n$ is sufficiently large, we \textbf{fix} $\lambda\in (0,\lambda_0]$ and  consider the setting where the observations are iid samples from the coupling measure $\hat \pi$. We will derive convergence bounds for the primal and dual solutions as the number of samples $n$ increases.
Let $(A_\infty,f_\infty,g_\infty)$ minimise \eqref{eq:kt_inf}.  Denote  $F_\infty = (f_\infty(x_i))_{i\in[n]}$,  $G_\infty = (g_\infty(y_j))_{j\in [n]}$ and $$P_\infty =\frac{1}{n^2}( p_\infty(x_i,y_j))_{i,j\in [n]}, \qwhereq p_\infty(x,y) = \exp\pa{\frac{2}{\epsilon}\pa{\Phi  A_\infty(x,y) + f_\infty(x) + g_\infty(y)}}.$$
Let $A_n$ minimize $\iot(\hat \pi_n)$. Then, there exists a probability matrix $P_n \in \RR^{n\times n}_+$ and vectors $F_n,G_n\in \RR^n$ such that 
$P_n = \frac{1}{n^2} \exp\pa{\frac{2}{\epsilon}\pa{\Phi_n A_n + F_{n} \oplus G_n}}$. In fact, $(A_n,F_n,G_n)$ minimize the finite dimensional dual problem \eqref{eq:kt_n} given in the appendix.
%
%
Now consider the \textit{certificates} 
$$
    z_\infty = \frac{1}{\lambda}\Phi^*\pa{p_\infty \alpha\otimes \beta - \pio} \qandq z_n = \frac{1}{\lambda}\Phi_n^*\pa{P_n - \hat P_n}.
$$ 
\RED{Note that $z_\infty$ and $z_n$ both depend on $\lambda$; the superscript was dropped since  $\lambda$ is fixed. Moreover, $z_\infty$ is precisely  $z^\lambda$ from  Propostion \ref{MinimalNormCertProposi}.}
By exploiting strong convexity properties of $\Jj_n$, one can show the following sample complexity bound on the convergence of $z_n$ to $z_\infty$ (the proof can be found in the appendix):


\begin{prop}\label{prop:sample_complexity}
Let $n \gtrsim \max\pa{\log(1/\delta) \lambda^{-2},\log(2s)}$ for some $\delta>0$.
For some constant $C>0$, with probability at least $1-\delta$, $\norm{z_\infty - z_n}_\infty \lesssim {\exp(C\norm{\Asoln}_1/\epsilon) \log(1/\delta)}{\lambda^{-1} n^{-\frac12}}$ and
$$
 \norm{A_n - A_\infty}^2_2 + \frac{1}{n} \sum_{i}(F_n - F_\infty)_i^2 + \frac{1}{n} \sum_{j}(G_n - G_\infty)_j^2 \lesssim \epsilon^2 \exp(C\norm{\Asoln}_1/\epsilon) \log(1/\delta) n^{-1}.
$$
\end{prop}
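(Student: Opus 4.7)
The plan is to derive both bounds from a single strong-convexity-plus-concentration argument applied to the finite-sample dual. First I would write out the sample dual $\Jj_n(A,F,G)$ obtained by restricting \eqref{eq:kt_inf} to $F_i=f(x_i)$, $G_j=g(y_j)$, so that $(A_n,F_n,G_n)$ is the (unique, up to the translation $F+c,G-c$) minimizer of $\Kk_n = \lambda\|A\|_1 + \Jj_n$; as reference point I take the sample restrictions $F_\infty=(f_\infty(x_i))_i$, $G_\infty=(g_\infty(y_j))_j$ of the population potentials produced by minimizing \eqref{eq:kt_inf}.

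Two a priori ingredients are then needed. First, an $L^\infty$ bound on the Schr\"odinger potentials: Assumption~\ref{ass:main}(ii) yields $\|c_{\Asoln}\|_\infty\le\|\Asoln\|_1$, and standard entropic OT arguments (the maximum principle for the Sinkhorn fixed-point) imply $\|f_\infty\|_\infty+\|g_\infty\|_\infty \lesssim \|\Asoln\|_1$, so the density $p_\infty$ is pinched in $[e^{-C\|\Asoln\|_1/\epsilon},e^{C\|\Asoln\|_1/\epsilon}]$. Second, strong convexity of $\Jj_n$ on the invariance-quotient subspace: the Hessian equals $\tfrac{2}{\epsilon}$ times an empirical second-moment matrix weighted by $\exp(\tfrac{2}{\epsilon}(F\oplus G+\Phi_n A))$, so the potential bound combined with Assumption~\ref{ass:main}(i) and a matrix-Bernstein estimate on $\tfrac{1}{n^2}\sum_{i,j}\cost(x_i,y_j)\cost(x_i,y_j)^\top$ produces a modulus $\mu \gtrsim \epsilon^{-1}\exp(-C\|\Asoln\|_1/\epsilon)$, valid once $n\gtrsim \log(s)$. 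Using the population first-order condition $\nabla_A\Jj(A_\infty,f_\infty,g_\infty)=-\lambda v$ with $v\in\partial\|A_\infty\|_1$, a standard convex-analysis step gives the energy inequality
$$
\mu\bigl(\|A_n-A_\infty\|_2^2 + \tfrac1n\|F_n-F_\infty\|_2^2 + \tfrac1n\|G_n-G_\infty\|_2^2\bigr)\lesssim \bigl\|\nabla\Jj_n(A_\infty,F_\infty,G_\infty) - \nabla\Jj(A_\infty,f_\infty,g_\infty)\bigr\|_\star^2.
$$
Each coordinate of the gradient difference is a mean-zero bounded empirical average with sup-norm at most $\exp(C\|\Asoln\|_1/\epsilon)$, so Hoeffding with a union bound over the $s+2n$ coordinates controls the right-hand side by $\exp(2C\|\Asoln\|_1/\epsilon)\log((s+n)/\delta)/n$; dividing by $\mu^2$ yields the claimed $\epsilon^2\exp(C\|\Asoln\|_1/\epsilon)\log(1/\delta)/n$ joint error bound.

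For the certificate gap I would split
$$
\lambda(z_n - z_\infty) = \Phi_n^*(P_n - p_\infty^{(n)}) + \bigl(\Phi_n^* p_\infty^{(n)} - \Phi^*(p_\infty\alpha\otimes\beta)\bigr) + \bigl(\Phi^* \pio - \Phi_n^* \hat P_n\bigr),
$$
where $p_\infty^{(n)} \eqdef (p_\infty(x_i,y_j))_{i,j}$. The second and third brackets are mean-zero empirical averages of vectors bounded by $\exp(C\|\Asoln\|_1/\epsilon)$, concentrating coordinatewise at rate $\exp(C\|\Asoln\|_1/\epsilon)\sqrt{\log(s/\delta)/n}$ by Hoeffding plus a union bound over the $s$ coordinates of $\Phi^*$. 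The first bracket is controlled by the local Lipschitz dependence of $\exp(\tfrac{2}{\epsilon}(\cdot))$ on $(A,F,G)$ applied to the joint error bound of the previous paragraph, using again the $L^\infty$ pinching of $p_\infty$ to linearize. Dividing by $\lambda$ delivers the stated $\exp(C\|\Asoln\|_1/\epsilon)\log(1/\delta)\lambda^{-1}n^{-1/2}$ bound on $\|z_n-z_\infty\|_\infty$. The main obstacle I expect is precisely the strong-convexity step: the population Hessian is positive definite thanks to Assumption~\ref{ass:main}(i), but transferring this to the empirical Hessian on the quotient subspace requires both a careful treatment of the marginal translation invariance in $(F,G)$ and a matrix-concentration estimate on the empirical cost Gram matrix, and it is this step that ultimately forces the $n\gtrsim\log(s)$ scaling appearing in the hypothesis.
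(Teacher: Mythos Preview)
Your overall architecture matches the paper's: split the certificate error into three pieces (the $\Phi_n^*(P_n-P_\infty)$ term, the $\Phi_n^*P_\infty - \Phi^*(p_\infty\alpha\otimes\beta)$ term, and the $\Phi^*(\hat\pi-\hat\pi_n)$ term), handle the last two by concentration, and drive the first one by a strong-convexity bound on the parameter error. The matrix-Bernstein estimate on the empirical Gram matrix is also exactly what the paper uses. But there is a real circularity in your strong-convexity step.

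You claim a modulus $\mu\gtrsim \epsilon^{-1}\exp(-C\|\Asoln\|_1/\epsilon)$ coming from the pinching of $p_\infty$. The Hessian of $\Jj_n$ at $(A,F,G)$ is weighted by $\exp\bigl(\tfrac{2}{\epsilon}(F\oplus G+\Phi_n A)\bigr)$, so the relevant lower bound along the segment joining $(A_\infty,F_\infty,G_\infty)$ to $(A_n,F_n,G_n)$ depends on \emph{both} endpoints, not only on $p_\infty$. In the paper's version of the inequality (their Lemma on strong convexity of $\Jj_n$) the constant is $\exp(-L/\epsilon)$ with $L=\Oo(\|A_\infty\|_1\vee\|A_n\|_1)$. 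Your bound on $f_\infty,g_\infty$ covers the first endpoint, but you have no control whatsoever on $\|A_n\|_1$; without it the modulus can degenerate and the energy inequality is vacuous. The paper closes this loop with a separate a~priori lemma showing $\|A_n\|_1\le 2\|\Asoln\|_1$ with high probability: one compares $\Kk_n$ at $A_n$ with $\Kk_n$ at $\Asoln$, then bounds $\Ll(\Asoln,\hat\pi_n)-\Ll(\Asoln,\hat\pi)$ via the sample complexity of entropic OT, and finally constructs a competitor $P$ satisfying the empirical marginals by projecting the discretized $\hat\pi$ (controlling the projection error by the empirical marginal defects). This is a substantive ingredient that your sketch does not supply, and it is precisely what forces the condition $\lambda\gtrsim n^{-1/2}$ (equivalently $n\gtrsim\lambda^{-2}$) in the hypothesis.

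A second, smaller issue: your Hoeffding-plus-union-bound over the $2n$ potential coordinates would introduce an extraneous $\log n$ factor that the statement does not carry. The paper instead bounds $n\|\partial_F\Jj_n(A_\infty,F_\infty,G_\infty)\|_2^2=\tfrac{1}{n}\sum_j\bigl(1-\tfrac{1}{n}\sum_i p_\infty(x_i,y_j)\bigr)^2$ directly by McDiarmid applied to the norm, which gives the clean $t/\sqrt{n}$ rate without coordinatewise union bounds.
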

\RED{
From Proposition \ref{MinimalNormCertProposi},   for all $\lambda\leq \lambda_0$ for some $\lambda_0$ sufficiently small, the magnitude of $z_\infty$ outside the support of $\hat{A}$ is less than one}. Moreover, the convergence result in Proposition \ref{prop:sample_complexity} above implies that, for $n$  sufficiently large, the magnitude of $z_n$ outside the support of $\hat{A}$ is also less than one. Hence, since the set $\enscond{i}{(z_n)_i=\pm1}$ determines the support of $A_n$, we have sparsistency.

\section{Gaussian distributions}


To get further insight about the sparsistency property ot iOT, we consider the special case where the source and target distributions are Gaussians, and the cost parametrization $c_A(x,y) = x^\top A y$. 
To this end, we first derive closed form expressions for the Hessian $\partial_A^2 \Ll(A) = \nabla_A^2 W(A)$. 
Given  $\alpha= \Nn(m_\alpha,\Sigma_\alpha)$ and $\beta = \Nn(m_\beta,\Sigma_\beta)$, it is known (see \cite{bojilov2016matching}) that the coupling density is also a Gaussian of the form $\pi = \Nn\pa{\binom{m_\alpha}{m_\beta}, \begin{pmatrix} \Sigma_\alpha & \Sigma \\ \Sigma^\top & \Sigma_\beta \end{pmatrix}}$ for some $\Sigma\in\RR^{d_1\times d_2}$. In this case, $W$ can be written as an optimization problem over the cross-covariance $\Sigma$ \cite{bojilov2016matching}.
\begin{equation}\label{eq:W_gaussian}
    W(A) = \sup_{\Sigma\in\RR^{d_1\times d_2}} \dotp{A}{\Sigma} + \frac{\epsilon}{2}  \log\det\pa{\Sigma_\beta - \Sigma^\top \Sigma_\alpha^{-1} \Sigma},
\end{equation}
with the optimal solution being precisely the cross-covariance of the optimal coupling $\pi$.
In    \cite{bojilov2016matching}, the  authors provide  an explicit formula for the minimizer $\Sigma$ to \eqref{eq:W_gaussian}, and, consequently, for $\nabla  W(A)$:
\begin{equation}\label{eq:Sigma_galichon}
\Sigma = \Sigma_\alpha A\Delta \pa{\Delta A^\top \Sigma_\alpha A \Delta  }^{-\frac12} \Delta -\tfrac{1}{2} \epsilon A^{\dagger,\top}  \qwhereq \Delta \eqdef \pa{\Sigma_\beta +\frac{ \epsilon^2}{4} A^\dagger\Sigma_\al^{-1} A^{\dagger,\top}}^{\frac12}.
\end{equation}
By differentiating the first order condition for $ W$, that is
$
A^\dagger = \epsilon^{-1} (\Sigma_\beta - \Sigma^\top\Sigma_\al^{-1} \Sigma) \Sigma^\dagger\Sigma_\al,
$
Galichon derives the expression  for the Hessian in terms of $\Sigma$ in  \eqref{eq:Sigma_galichon}:
\begin{equation}\label{eq:d_Sigma}
\nabla^2  W(A)= 
\partial_A \Sigma= \epsilon \pa{ \Sigma_\alpha\Sigma^{-1,\top}\otimes \Sigma_\beta\Sigma^{-1} + \TT}^{-1} \pa{ A^{-1,\top}\otimes A^{-1} },
\end{equation}
where  $\TT$ is such that $\TT\mathrm{vec}( A) = \mathrm{vec}(A^\top)$.
This formula  does not hold in when $A$ is rectangular or rank deficient, since $A^{\dagger}$ is not differentiable.
 In the following, we derive, via the implicit function theorem, a general formula for $\partial_A \Sigma$ that agrees with that of Galichon in the  square invertible case.

\begin{lem}\label{lem:hessian_formula}
Denoting $\Sigma$ as in \eqref{eq:Sigma_galichon}, one has
\begin{align}
\nabla^2  W(A) &=\epsilon   \pa{\epsilon^2 (\Sigma_\beta - \Sigma^\top \Sigma_\al\Sigma)^{-1} \otimes  (\Sigma_\al - \Sigma\Sigma_\beta^{-1} \Sigma^\top)^{-1}
+(A^\top\otimes A) \TT}^{-1}  \label{eq:hessian_formula_a}
\end{align}
\end{lem}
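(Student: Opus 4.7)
The plan is to prove the formula via the implicit function theorem applied to the first-order optimality condition of the inner maximization problem defining $W(A)$ in \eqref{eq:W_gaussian}, avoiding the use of $A^\dagger$ (which is not differentiable in the non-square or rank-deficient case). Let $h(A,\Sigma) \eqdef \langle A,\Sigma\rangle + \tfrac{\eps}{2}\log\det M(\Sigma)$ with $M(\Sigma)\eqdef \Sigma_\beta - \Sigma^\top\Sigma_\al^{-1}\Sigma$. By envelope, $\nabla_A W(A) = \Sigma^*(A)$ where $\Sigma^*(A)$ is the unique maximizer (uniqueness follows from strict concavity of $h$ in $\Sigma$ and is already used in \eqref{eq:Sigma_galichon}). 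Since $\partial^2_{A\Sigma} h = I$ (in vectorized form), the implicit function theorem applied to $\partial_\Sigma h(A,\Sigma^*(A))=0$ gives $\nabla^2 W(A) = \partial_A \Sigma^* = -(\partial_\Sigma^2 h)^{-1}$, so the task reduces to computing $\partial^2_\Sigma h$ at $\Sigma = \Sigma^*(A)$ and inverting it.

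First I would compute $\partial_\Sigma h = A - \eps\,\Sigma_\al^{-1}\Sigma M^{-1}$ using the identity $d\log\det M = -2\,\mathrm{tr}(\Sigma_\al^{-1}\Sigma M^{-1}d\Sigma)$ (valid by symmetry of $M$ and $\Sigma_\al$). The first-order condition is then
\[
\Sigma_\al^{-1}\Sigma M^{-1} = \eps^{-1} A, \qquad \text{equivalently}\qquad \Sigma^\top \Sigma_\al^{-1} = \eps^{-1} M A^\top.
\]
Differentiating $\partial_\Sigma h$ a second time (using $dM^{-1} = M^{-1}(d\Sigma^\top \Sigma_\al^{-1}\Sigma + \Sigma^\top \Sigma_\al^{-1} d\Sigma)M^{-1}$) yields three terms, which after substituting the first-order condition become
\[
-\eps^{-1}\partial^2_\Sigma h \cdot d\Sigma \;=\; \Sigma_\al^{-1}\, d\Sigma\, M^{-1} + \eps^{-2} A\, M A^\top\, d\Sigma\, M^{-1} + \eps^{-2} A\, d\Sigma^\top\, A .
\]

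The key simplification is recognising, via the Woodbury/matrix-inversion lemma, that
\[
\Sigma_\al^{-1} + \eps^{-2} A M A^\top = \Sigma_\al^{-1} + \Sigma_\al^{-1}\Sigma M^{-1}\Sigma^\top \Sigma_\al^{-1} = (\Sigma_\al - \Sigma\Sigma_\beta^{-1}\Sigma^\top)^{-1},
\]
so that the combined $d\Sigma\,M^{-1}$ term is exactly $N^{-1} d\Sigma M^{-1}$ with $N\eqdef \Sigma_\al - \Sigma\Sigma_\beta^{-1}\Sigma^\top$. Vectorizing with $\mathrm{vec}(BXC) = (C^\top\otimes B)\mathrm{vec}(X)$ and $\mathrm{vec}(d\Sigma^\top) = \TT\,\mathrm{vec}(d\Sigma)$, and using symmetry of $M$, one obtains
\[
-\eps^{-1}\partial^2_\Sigma h = (M^{-1}\otimes N^{-1}) + \eps^{-2}(A^\top\otimes A)\TT = \eps^{-2}\bigl(\eps^2 M^{-1}\otimes N^{-1} + (A^\top\otimes A)\TT\bigr),
\]
and inverting gives $\nabla^2 W(A) = -(\partial^2_\Sigma h)^{-1} = \eps\bigl(\eps^2 M^{-1}\otimes N^{-1} + (A^\top\otimes A)\TT\bigr)^{-1}$, which is the claimed formula (with $M = \Sigma_\beta - \Sigma^\top\Sigma_\al^{-1}\Sigma$).

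The main obstacle is not conceptual but bookkeeping: carefully tracking the two occurrences of $d\Sigma$ vs.\ one occurrence of $d\Sigma^\top$ when differentiating $\Sigma_\al^{-1}\Sigma M^{-1}$, ensuring the right substitutions from the first-order condition, and converting to Kronecker form with the correct placement of $\TT$. A secondary point worth a sentence in the final write-up is recovering Galichon's formula \eqref{eq:d_Sigma} in the square invertible case: expanding $A^\top\otimes A = (A^{-1,\top}\otimes A^{-1})^{-1}$ and pulling it outside the bracket recovers the factor $A^{-1,\top}\otimes A^{-1}$, while the remaining $\eps^2 M^{-1}\otimes N^{-1}$ term rearranges (using the first-order condition $\Sigma_\al^{-1}\Sigma M^{-1}\Sigma_\beta = \eps^{-1}A\Sigma_\beta$ to substitute for the Kronecker factors) into $\Sigma_\al \Sigma^{-1,\top}\otimes \Sigma_\beta \Sigma^{-1}$, confirming consistency with \eqref{eq:d_Sigma}.
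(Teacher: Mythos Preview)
Your proof is correct and follows essentially the same route as the paper: apply the implicit function theorem to the first-order optimality condition of the inner maximisation, compute the second $\Sigma$-derivative, use the Woodbury identity to collapse the two ``$d\Sigma\,M^{-1}$'' terms into $N^{-1}d\Sigma M^{-1}$, and vectorise. The only organisational difference is that the paper first reduces to the case $\Sigma_\alpha=\Sigma_\beta=\Id$ via the change of variable $W(A)=\tilde W(\Sigma_\alpha^{1/2}A\Sigma_\beta^{1/2})$, proves the simplified Hessian formula there, and then pulls the general statement back through $\nabla^2 W(A)=(\Sigma_\beta^{1/2}\otimes\Sigma_\alpha^{1/2})\nabla^2\tilde W(\Sigma_\alpha^{1/2}A\Sigma_\beta^{1/2})(\Sigma_\beta^{1/2}\otimes\Sigma_\alpha^{1/2})$; you instead carry $\Sigma_\alpha,\Sigma_\beta$ through the whole computation. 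Your direct route is slightly shorter and avoids the change-of-variable bookkeeping, while the paper's reduction has the side benefit of isolating the identity-covariance formula that is reused later in the $\epsilon\to 0$ and $\epsilon\to\infty$ analyses. (Incidentally, your computation confirms that the first Kronecker factor in the statement should read $\Sigma_\beta-\Sigma^\top\Sigma_\alpha^{-1}\Sigma$, as you write, matching the definition of $M$ in \eqref{eq:W_gaussian}.)
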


This formula given in  Lemma \ref{lem:hessian_formula} provides an explicit expression for the certificate \eqref{FuchsCert}.
\subsection{Limit cases for large and small \texorpdfstring{$\epsilon$}{epsilon}}\label{sec:limit_cases}
This section explores the behaviour of the certificate in the large/small  $\epsilon$ limits: 
 Proposition \ref{prop:gaussian_obj_lasso} reveals that the large $\epsilon$ limit coincides with the classical Lasso  while Proposition \ref{prop:gaussian_obj_graph_lasso} reveals that the small epsilon limit (for symmetric  $A\succ 0$ and  $\Sigma_\alpha = \Sigma_\beta = \Id$) coincides with the Graphical Lasso.
 In the following results, we denote the functional in \eqref{eq:primal} with  parameters  $\la$ and $\epsilon$ by $\Ff_{\epsilon,\lambda}(A)$.

\RED{
\begin{prop}[$\epsilon\to \infty$]\label{prop:gaussian_obj_lasso}
Let $\Asoln$ be invertible and let $\hat \pi = \sink(c_\Asoln,\epsilon)$ be the observed coupling between $\alpha= \Nn(m_\alpha, \Sigma_\alpha)$ and $\beta = \Nn(m_\beta, \Sigma_\beta)$. 
Then,
\begin{equation}\label{eq:limit_eps_infty}
\lim_{\epsilon\to \infty} z_\epsilon = (\Sigma_\beta\otimes \Sigma_\al)_{(:,I)} \big((\Sigma_\beta\otimes \Sigma_\al)_{(I,I)}\big)^{-1} \sign(\Asoln)_I.
\end{equation}

Moreover, for $\lambda_0>0$, given any sequence $(\epsilon_j)_j$ and $A_j\in \argmin_A \Ff_{\epsilon, \lambda_0 /\epsilon_j}(A)$ with $\lim_{j\to\infty}\epsilon_j =  \infty$, any cluster point of $(A_j)_j$ is in
\begin{equation}\label{LassoProblem}
\uargmin{A\in\RR^{d\times d}} \lambda_0\norm{A}_1 +\frac12 \|{(\Sigma_{\beta}^{1/2}\otimes \Sigma_{\al}^{1/2})(A - \Asoln)}\|_F^2 
\end{equation}
\end{prop}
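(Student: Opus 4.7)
The plan is to proceed in two main steps corresponding to the two claims of the proposition: first, compute the pointwise limit of the certificate $z_\epsilon$; second, establish $\Gamma$-convergence of the rescaled objectives and conclude via equicoercivity.

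\textbf{Step 1: Asymptotics of $\Sigma$ and of $\nabla^2 W$.} Starting from the first-order condition $A = \epsilon\, \Sigma_\alpha^{-1}\Sigma (\Sigma_\beta - \Sigma^\top\Sigma_\alpha^{-1}\Sigma)^{-1}$ derived from \eqref{eq:W_gaussian}, I would invert this to obtain the fixed-point relation $\Sigma = \epsilon^{-1}\Sigma_\alpha A(\Sigma_\beta - \Sigma^\top\Sigma_\alpha^{-1}\Sigma)$ and conclude by a Neumann/implicit function argument that, for $A$ fixed and $\epsilon$ large,
\[
\Sigma \;=\; \epsilon^{-1}\Sigma_\alpha A\Sigma_\beta + \Oo(\epsilon^{-3}).
\]
Plugging this into the closed form of Lemma~\ref{lem:hessian_formula}, the block $\epsilon^2(\Sigma_\beta - \Sigma^\top\Sigma_\alpha^{-1}\Sigma)^{-1}\otimes(\Sigma_\alpha - \Sigma\Sigma_\beta^{-1}\Sigma^\top)^{-1} = \epsilon^2\Sigma_\beta^{-1}\otimes \Sigma_\alpha^{-1} + \Oo(1)$ dominates the $(A^\top\otimes A)\TT$ term, and one obtains
\[
\nabla^2 W(A) \;=\; \epsilon^{-1}\,(\Sigma_\beta\otimes\Sigma_\alpha) + \Oo(\epsilon^{-3}).
\]
The scalar $\epsilon^{-1}$ cancels between the $(:,I)$ and $(I,I)$ blocks in \eqref{FuchsCert}, yielding \eqref{eq:limit_eps_infty}.

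\textbf{Step 2: Expansion of the objective.} Substituting the expansion of $\Sigma$ back into $W$ and using $\log\det(\Sigma_\beta - \Sigma^\top\Sigma_\alpha^{-1}\Sigma) = \log\det\Sigma_\beta - \epsilon^{-2}\Tr(A^\top\Sigma_\alpha A\Sigma_\beta) + \Oo(\epsilon^{-4})$, I get
\[
W(A) \;=\; \frac{1}{2\epsilon}\Tr(A^\top\Sigma_\alpha A\Sigma_\beta) + \tfrac{\epsilon}{2}\log\det\Sigma_\beta + \Oo(\epsilon^{-2}).
\]
Because $\hat\pi = \sink(c_{\Asoln},\epsilon)$ has cross-covariance $\Sigma_{\Asoln} = \epsilon^{-1}\Sigma_\alpha\Asoln\Sigma_\beta + \Oo(\epsilon^{-3})$, the linear term reads $\langle A,\Sigma_{\Asoln}\rangle = \epsilon^{-1}\langle A,\Sigma_\alpha\Asoln\Sigma_\beta\rangle + \Oo(\epsilon^{-3})$. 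Multiplying $\Ff_{\epsilon,\lambda_0/\epsilon}$ by $\epsilon$ and dropping the $A$-independent constant $\tfrac{\epsilon^2}{2}\log\det\Sigma_\beta$, I obtain
\[
\epsilon\,\Ff_{\epsilon,\lambda_0/\epsilon}(A) - \mathrm{cst}(\epsilon) \;=\; \lambda_0\|A\|_1 + \tfrac12\|(\Sigma_\beta^{1/2}\otimes\Sigma_\alpha^{1/2})(A-\Asoln)\|_F^2 + \Oo(\epsilon^{-1}),
\]
where the error is uniform on bounded sets of $A$.

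\textbf{Step 3: Convergence of minimizers.} Denote the limit functional in \eqref{LassoProblem} by $\Ff_0$. Strict convexity of the quadratic part (since $\Sigma_\alpha,\Sigma_\beta\succ 0$) gives coercivity of $\Ff_0$. Uniform-on-bounded-sets convergence from Step 2 yields both the $\Gamma$-$\liminf$ and $\Gamma$-$\limsup$ inequalities trivially (constant recovery sequences). Equicoercivity follows because, for $\|A\|$ large, the quadratic term in the expansion bounds $\epsilon\,\Ff_{\epsilon,\lambda_0/\epsilon}(A)$ from below uniformly in $\epsilon$. Standard $\Gamma$-convergence theory then gives that any cluster point of a sequence of minimizers $A_j \in \argmin \Ff_{\epsilon_j,\lambda_0/\epsilon_j}$ lies in $\argmin \Ff_0$.

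\textbf{Main obstacle.} The only delicate point is making the expansion of $W(A)$ and $\Sigma_{\Asoln}$ quantitative and uniform over bounded neighborhoods of $\Asoln$; this is needed both to ensure uniformity of the $\Oo(\epsilon^{-1})$ remainder (for the $\Gamma$-convergence argument) and to control the limit of $\nabla^2 W(A)$ needed in Step~1. This reduces to verifying the implicit function theorem for the saddle-point equation $A = \epsilon\Sigma_\alpha^{-1}\Sigma(\Sigma_\beta - \Sigma^\top\Sigma_\alpha^{-1}\Sigma)^{-1}$ around $\Sigma=0$ and propagating norm estimates; nothing conceptually new beyond careful bookkeeping.
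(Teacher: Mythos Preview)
Your proposal is correct and follows essentially the same route as the paper: expand the cross-covariance $\Sigma$ to leading order in $\epsilon^{-1}$, deduce the limit of $\nabla^2 W$ (hence of the certificate), then expand $\epsilon\,\Ff_{\epsilon,\lambda_0/\epsilon}$ pointwise and invoke $\Gamma$-convergence for the minimizers. The only cosmetic differences are that the paper first reduces to $\Sigma_\alpha=\Sigma_\beta=\Id$ by the change of variable $X=\Sigma_\alpha^{1/2}A\Sigma_\beta^{1/2}$ and then reads off the expansion of $\Sigma$ from the explicit SVD formula \eqref{eq:tilde_d_i}, whereas you work directly with general covariances via the first-order condition; both yield the same $\Sigma=\epsilon^{-1}\Sigma_\alpha A\Sigma_\beta+\Oo(\epsilon^{-3})$ and the paper's argument is no more detailed about uniformity or equicoercivity than yours.
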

}

\RED{
\paragraph{Interpretation}  As $\epsilon \to\infty$, the KL term forces $\hat\pi$ to be close to the \emph{independent coupling} $\alpha\otimes \beta$ and in the limit, iOT is simply a Lasso problem and the limit in \eqref{eq:limit_eps_infty} is precisely the Lasso certificate  \cite{hastie2015statistical}. Here,  the cross covariance of $\hat \pi$ satisfies $\epsilon \Sigma = A + \Oo(\epsilon)$ (\eqref{eq:eps_inf_svd_cost} in the appendix), so for large $\epsilon$, sparsity in $A$ indicates the independence in the coupling between $\alpha$ and $\beta$. 
}


\RED{
\begin{prop}[$\epsilon\to 0$]\label{prop:gaussian_obj_graph_lasso}

Let $\Asoln$ be symmetric positive-definite and let $\hat \pi = \sink(c_\Asoln,\epsilon)$ be the observed coupling between $\alpha= \Nn(m_\alpha, \Id)$ and $\beta = \Nn(m_\beta, \Id)$. Then,
\begin{equation}\label{eq:Limit_eps_0}
\lim_{\epsilon \to 0} z_\epsilon = (\Asoln^{-1}\otimes \Asoln^{-1})_{(:,I)}\big((\Asoln^{-1}\otimes \Asoln^{-1})_{(I,I)}\big)^{-1} \sign(\Asoln)_I.
\end{equation}
Let $\lambda_0>0$.  Then,  optimizing over symmetric positive semi-definite matrices, 
given any sequence $(\epsilon_j)_j$ and $A_j\in \argmin_{A\succeq 0} \Ff_{\epsilon, \lambda_0 \epsilon_j}(A)$ with $\lim_{j\to\infty}\epsilon_j =  0$, any cluster point of $(A_j)_j$ is in
\begin{equation}\label{GlassoProblem}
\uargmin{A\succeq 0} \lambda_0 \norm{A}_1 -\frac12 \log\det(A) + \frac{1}{2} \dotp{A}{\Asoln^{-1}}.
\end{equation}
\end{prop}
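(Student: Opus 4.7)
The plan is to prove Proposition~\ref{prop:gaussian_obj_graph_lasso} in two steps: first, extract the limit of the certificate \eqref{FuchsCert} from an asymptotic expansion of $\nabla^2W(\Asoln)$ based on Lemma~\ref{lem:hessian_formula}; then, Taylor expand the loss in $\epsilon$ to identify the variational limit and conclude via a $\Gamma$-convergence argument.

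For the first claim, I would specialize \eqref{eq:Sigma_galichon} to $\Sigma_\alpha=\Sigma_\beta=I$ and $A\succ 0$ symmetric, where $A$ commutes with $\Delta=(I+\tfrac{\epsilon^2}{4}A^{-2})^{1/2}$, so that the optimal cross-covariance simplifies to the symmetric form $\Sigma_A = \Delta - \tfrac{\epsilon}{2}A^{-1}$. A short computation then yields the key identity $I - \Sigma_A^2 = \epsilon A^{-1}\Sigma_A$, which allows one to rewrite $\epsilon^2(I-\Sigma_A^2)^{-1}\otimes(I-\Sigma_A^2)^{-1}$ as $(\Sigma_A^{-1}A)\otimes(\Sigma_A^{-1}A)$. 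Plugging into \eqref{eq:hessian_formula_a} and letting $\epsilon\to 0$ (so that $\Sigma_A\to I$), the operator inside the inverse tends to $(A\otimes A)(I+\TT)$. Since $A\otimes A$ commutes with $\TT$ for symmetric $A$, this operator is block-diagonal with respect to the symmetric/antisymmetric decomposition, and on the symmetric subspace (on which $\TT$ acts as the identity) one obtains $\nabla^2 W(A)|_{\mathrm{sym}} \sim \tfrac{\epsilon}{2}(A^{-1}\otimes A^{-1})$ as $\epsilon\to 0$. Because $\mathrm{Supp}(\Asoln)$ is symmetric, substituting into \eqref{FuchsCert} and cancelling the $\epsilon/2$ scaling yields the limit \eqref{eq:Limit_eps_0}.

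For the second claim, Taylor expanding $\Sigma_A = I -\tfrac{\epsilon}{2}A^{-1}+O(\epsilon^2)$ and using the identity $\log\det(I-\Sigma_A^2) = d\log\epsilon -\log\det A+\log\det\Sigma_A$ gives $W(A) = \Tr(A) + \tfrac{\epsilon d}{2}(\log\epsilon-1) - \tfrac{\epsilon}{2}\log\det A + O(\epsilon^2)$ locally uniformly on compact subsets of $\{A\succ 0\}$. Combined with the analogous expansion $\langle A,\hat\Sigma_\epsilon\rangle = \Tr(A) - \tfrac{\epsilon}{2}\Tr(A\Asoln^{-1}) + O(\epsilon^2)$, after subtracting the $A$-independent normalisation $c(\epsilon)= \tfrac{d}{2}(\log\epsilon-1)$ and dividing by $\epsilon$ one finds
\begin{equation*}
\tfrac{1}{\epsilon}\Ff_{\epsilon,\lambda_0\epsilon}(A) - c(\epsilon) \;\xrightarrow[\epsilon\to 0]{}\; \lambda_0\|A\|_1 + \tfrac{1}{2}\Tr(A\Asoln^{-1}) - \tfrac{1}{2}\log\det A = G(A)
\end{equation*}
locally uniformly on $\{A\succ 0\}$, where $G$ is the graphical Lasso functional in \eqref{GlassoProblem}.

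Since the minimizers $A_j$ of $\Ff_{\epsilon_j,\lambda_0\epsilon_j}$ coincide with those of the shifted, rescaled functional on the left, a standard $\Gamma$-convergence argument implies that their cluster points solve \eqref{GlassoProblem}. The main technical obstacle is equi-coercivity: one must show that the $A_j$ stay in a compact subset of $\{A\succ 0\}$ uniformly in $j$, despite the diverging normalisation $c(\epsilon_j)\to -\infty$. The $\ell_1$ penalty combined with the coercivity of $G$ at infinity bounds $\|A_j\|$ from above via $\lambda_0\|A_j\|_1 + (\tfrac{1}{\epsilon_j}\Ll(A_j,\hat\pi_{\epsilon_j})-c(\epsilon_j)) \leq \lambda_0\|\Asoln\|_1 + G(\Asoln)+o(1)$, while the $-\log\det A$ contribution inherited from the entropic KL in $W$ must be shown to push $A_j$ away from the singular boundary of the PSD cone uniformly in $j$; this should follow from the convexity of $W$ in $A$ together with the locally uniform $O(\epsilon)$ error control in the Taylor expansion.
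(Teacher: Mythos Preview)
Your proposal is correct and follows essentially the same approach as the paper: for the certificate limit you use the same identity $I-\Sigma_A^2=\epsilon A^{-1}\Sigma_A$ and the symmetric/antisymmetric block decomposition of $(A\otimes A)(\Sigma^{-1}\otimes\Sigma^{-1}+\TT)$, and for the variational limit you use the same Taylor expansion $\Sigma_{\epsilon,A}=I-\tfrac{\epsilon}{2}A^{-1}+O(\epsilon^2)$ followed by a $\Gamma$-convergence argument. Your treatment is in fact slightly more explicit than the paper's, which simply invokes $\Gamma$-convergence without discussing the equi-coercivity that you correctly flag as the remaining technical point.
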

}
\RED{
\paragraph{Interpretation}  In contrast, $\epsilon \to 0$, the KL term disappears and the coupling  $\hat\pi$ becomes dependent. Naturally, the limit problem  is the graphical lasso, typically used to infer conditional independence in graphs (but where covariates can be highly dependent). Note also that the limit  \eqref{eq:Limit_eps_0}  is precisely the graphical Lasso certificate \cite{hastie2015statistical}. Here, (Remark \ref{rem:graphical_lasso_interp} in the appendix) one show that  for $(x,y)\sim \pi$, the conditional covariance of $x$ conditional on $y$ (and also vice versa) is  $\epsilon A^{-1} + \Oo(\epsilon^2)$. Sparsity in $A$ can therefore be viewed as information on conditional independence.
}

 
\subsection{Numerical illustrations}
\label{sec:numerics}

In order to gain some insight into the impact of $\epsilon$ and the covariance structure on the efficiency of iOT, we present numerical computations of certificates here. 
We fix the covariances of the input measures as $\Sigma_\alpha=\Sigma_\beta=\Id_n$, similar results are obtained with different covariance as long as they are not rank-deficient.
We consider that the support of the sought-after cost matrix $A = \delta \Id_n + \diag(G 1_n) - G \in \RR^{n \times n}$ is defined as a shifted Laplacian matrix of some graph adjacency matrix $G$, for a graph of size $n=80$ (similar conclusions hold for larger graphs). We set the shift $\delta$ to be $10\%$ of the largest eigenvalue of the Laplacian, ensuring that $C$ is symmetric and definite. This setup corresponds to graphs defining positive interactions at vertices and negative interactions along edges. For small $\epsilon$, adopting the graphical lasso interpretation (as exposed in Section~\ref{sec:limit_cases}) and interpreting $C$ as a precision matrix, this setup corresponds (for instance, for a planar graph) to imposing a spatially smoothly varying covariance $C^{-1}$.
Figure~\ref{fig:certifs} illustrates how the value of the certificates $z_{i,j}$ evolves depending on the indexes $(i,j)$ for three types of graphs (circular, planar, and Erdős–Rényi with a probability of edges equal to 0.1), for several values of $\epsilon$.
By construction, $z_{i,i}=1$ and $z_{i,j}=-1$ for $(i,j)$ connected by the graph. For $z$ to be non-degenerate, it is required that $|z_{i,j}|<1$, as $i$ moves away from $j$ on the graph.
For the circular and planar graphs, the horizontal axis represents the geodesic distance $d_{\text{geod}}(i,j)$, demonstrating how the certificates become well-behaved as the distance increases. The planar graph displays envelope curves showing the range of values of $z$ for a fixed value of $d_{\text{geod}}(i,j)$, while this is a single-valued curve for the circular graph due to periodicity.
For the Erdős–Rényi graph, to better account for the randomness of the certificates along the graph edges, we display the histogram of the distribution of $|z_{i,j}|$ for $d_{\text{geod}}(i,j)=2$ (which represents the most critical set of edges, as they are the most likely to be large).
All these examples show the same behavior, namely, that increasing $\epsilon$ improves the behavior of the certificates (which is in line with the stability analysis of Section~\ref{sec:limit_cases}, %
since  \eqref{eq:limit_eps_infty} implies that for large $\epsilon$, the certificate is trivially non-degenerate whenever $\Sigma_\alpha,\Sigma_\beta$ are diagonal), and that pairs of vertices $(i,j)$ connected by a small distance $d_{\text{geod}}(i,j)$ are the most likely to be degenerate. This suggests that they will be inaccurately estimated by iOT for small $\epsilon$.

\begin{figure}[!htb]
    \centering
    \begin{tabular}{c@{}c@{}c@{}c}
    \includegraphics[width=.33\linewidth]{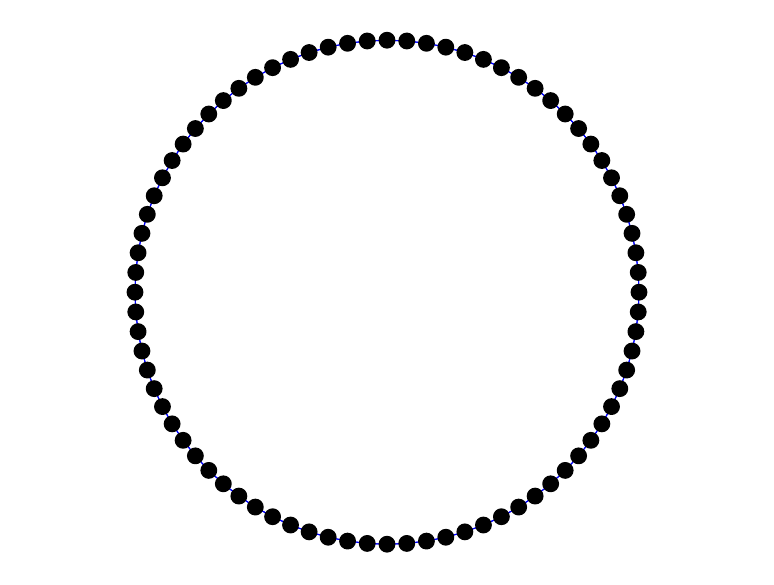}&
    \includegraphics[width=.33\linewidth]{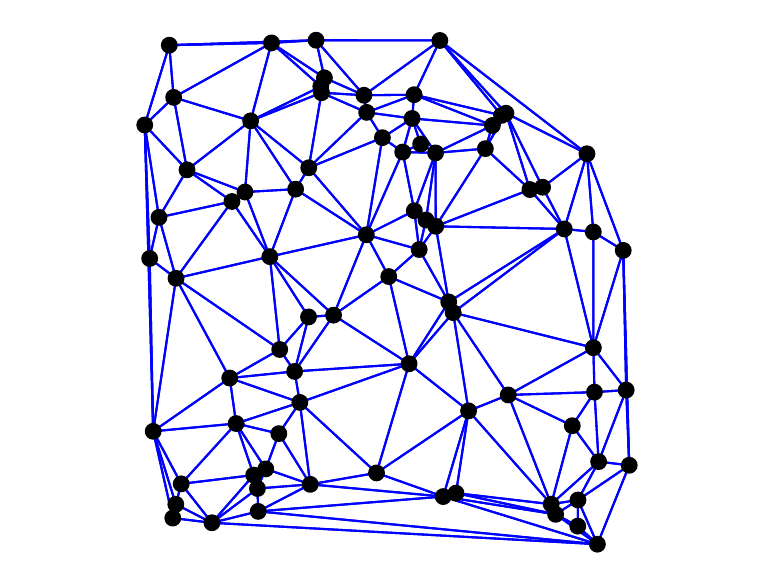}&
    \includegraphics[width=.33\linewidth]{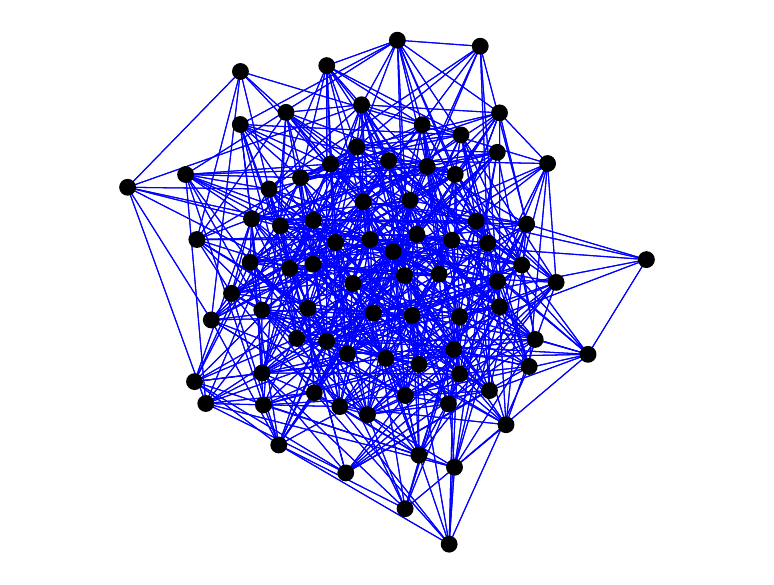}\\
    \includegraphics[width=.32\linewidth]{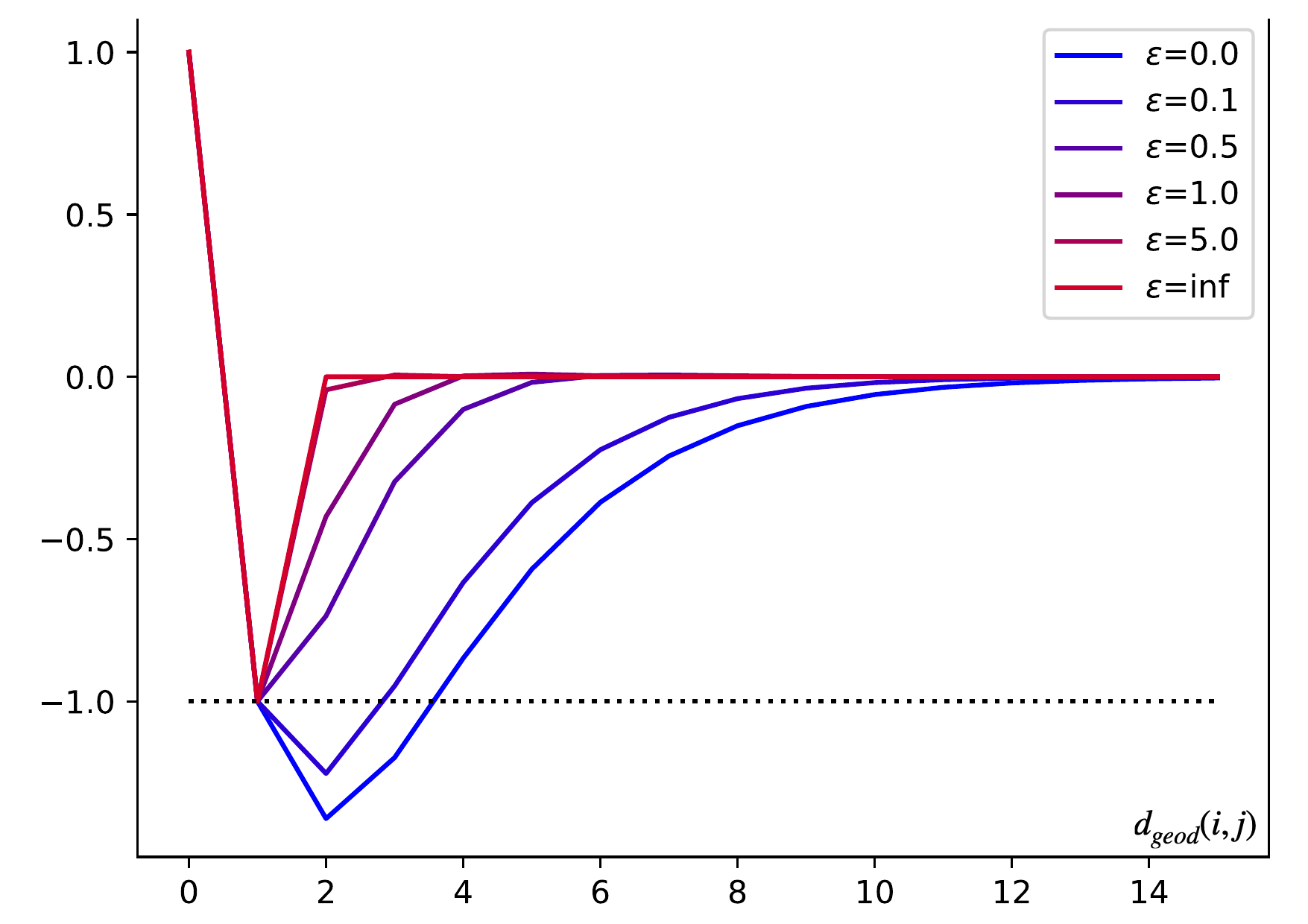}&
    \includegraphics[width=.32\linewidth]{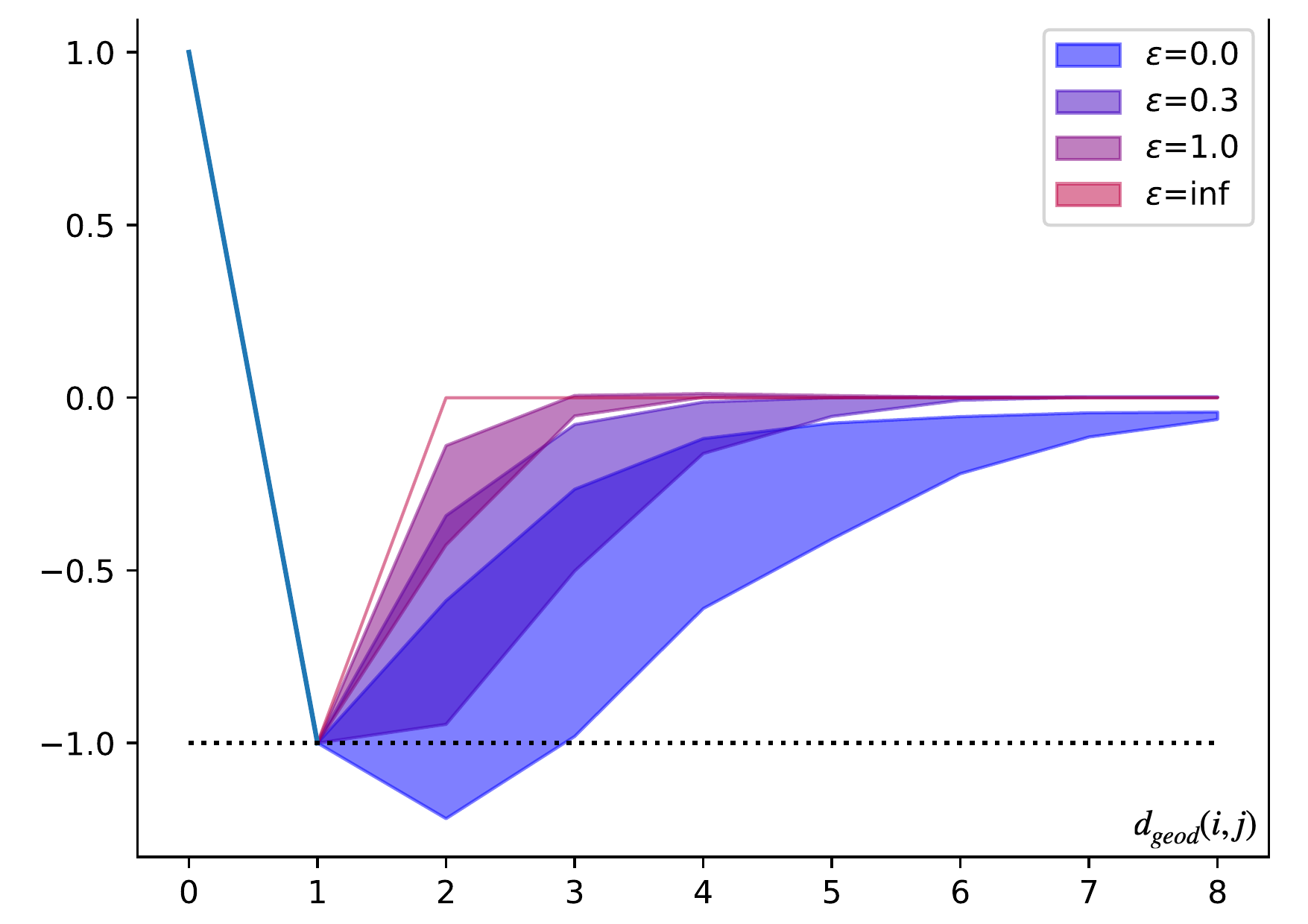}&
    \includegraphics[width=.32\linewidth]{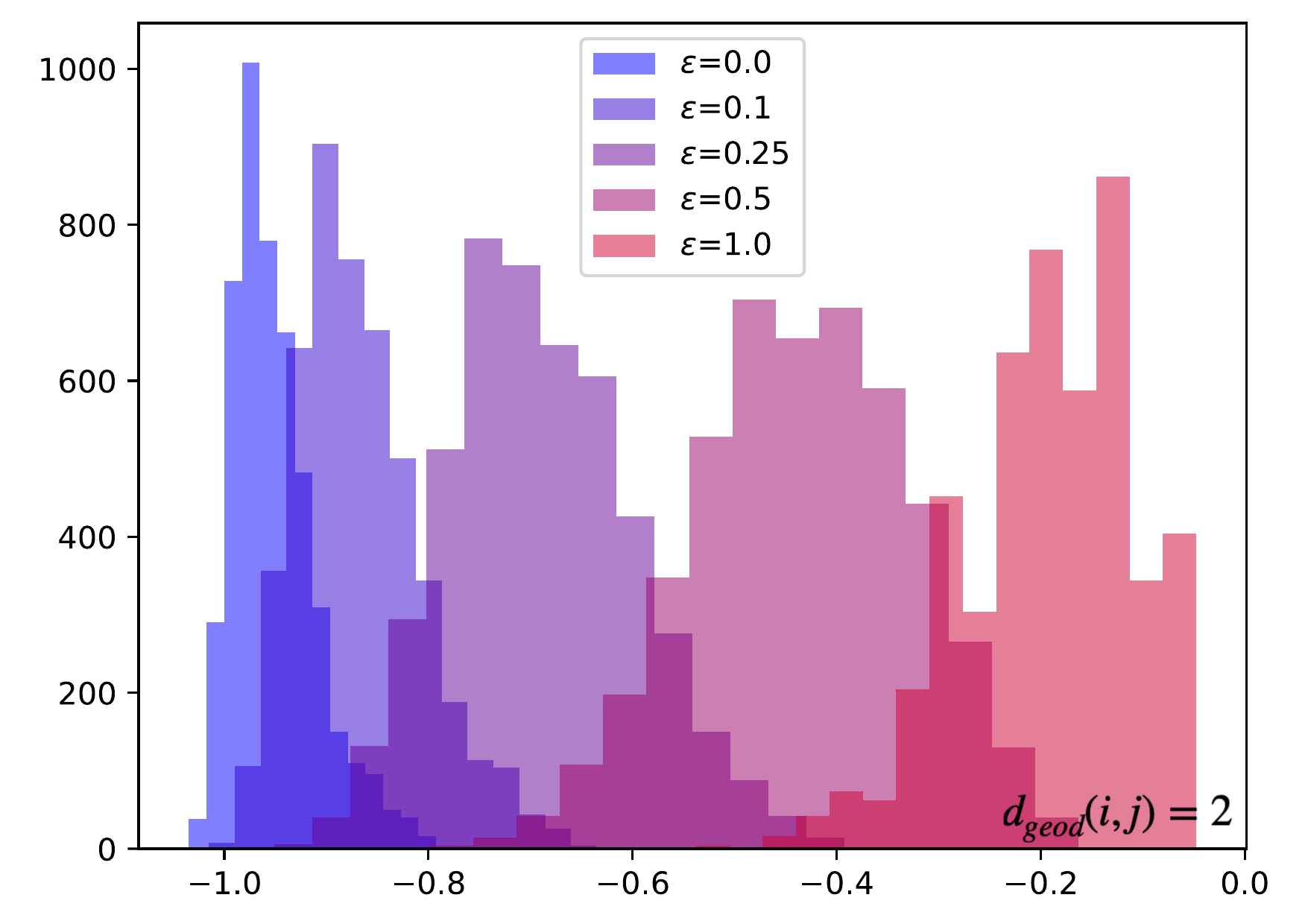} \\
  \footnotesize  Circular & \footnotesize Planar &  \footnotesize Erdős–Rényi
 	\end{tabular}   
  	\caption{Display of the certificate values $z_{i,j}$ for three types of graphs, for varying $\epsilon$. Left, middle: plotted as a function of the geodesic distance $d_{\text{geod}}(i,j)$ on the $x$-axis. Right: histogram of $z_{i,j}$ for $(i,j)$ at distance $d_{\text{geod}}(i,j)=2$.\label{fig:certifs} }
\end{figure}

Figure~\ref{fig:perfs} displays the recovery performances of $\ell^1$--iOT for the circular graph shown on the left of Figure~\ref{fig:certifs} (similar results are obtained for the other types of graph topologies).
These numerical simulations are obtained using the large-scale iOT solver, which we detail in Appendix~\ref{sec:solver}.
The performance is represented using the number of inaccurately estimated coordinates in the estimated cost matrix $C$, so a score of 0 means a perfect estimation of the support (sparsistency is achieved).
For $\epsilon = 0.1$, sparsistency cannot be achieved, aligning with the fact that the certificate $z$ is degenerate as depicted in the previous figure. In sharp contrast, for larger $\epsilon$, sparsistency can be attained as soon as the number of samples $N$ is sufficiently large, which also aligns with our theory of sparsistency of $\ell^1$--iOT since the certificate is guaranteed to be non-degenerate in the large $\epsilon$ setting.

\begin{figure}[!htb]
    \centering
    \begin{tabular}{c@{}c@{}c@{}c}
      \footnotesize $\epsilon=0.1$ &  \footnotesize $\epsilon=1$ & \footnotesize  $\epsilon=10$
\\
    \rotatebox{90}{\hspace{10pt}\footnotesize $\sharp$ wrongly estim. pos.}
    \includegraphics[width=.32\linewidth]{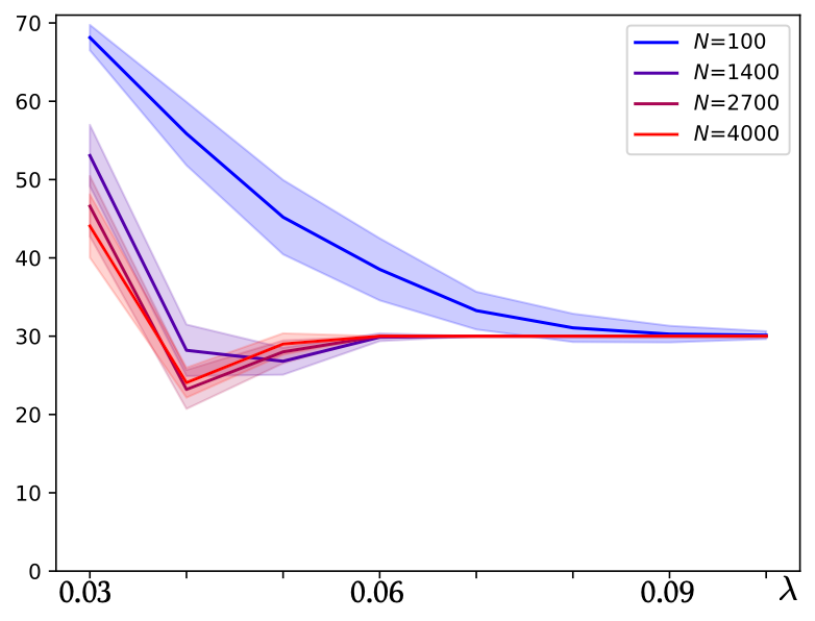}&
    \includegraphics[width=.32\linewidth]{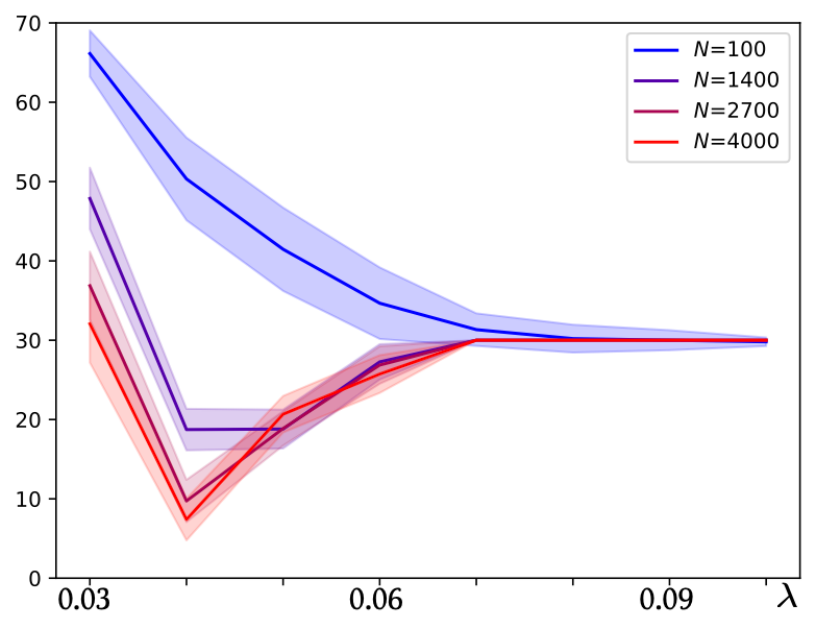}&
    \includegraphics[width=.32\linewidth]{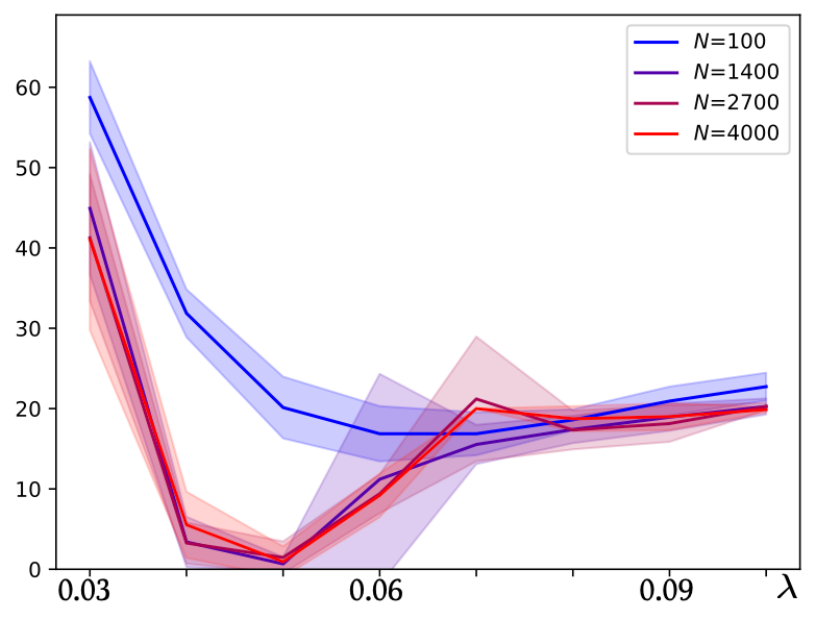}
 	\end{tabular}   
  	\caption{ Recovery performance (number of wrongly estimated position) of $\ell^1$--iOT as a function of $\lambda$ for three different values of $\epsilon$. \label{fig:perfs} }
\end{figure}

\RED{
Figure \ref{fig:nonsym} in the appendix displays the certificate values in the case of a non-symmetric planar graph. The graph is obtained by deleting all edges on a planar graph with $i<j$. We plot the certificate values as a function of geodesic distances $d_{\text{geod}}(i,j)$ of the symmetrized graph. The middle plot shows the certificate values on $i\geq j$ (where the actual edges are constrained to be and nondegeneracy requires values smaller than 1 in absolute value for $d_{\text{geod}}(i,j)\geq 2$). The right plot shows the certificate values on $i\leq j$ where there are no edges and for nondegeneracy, one expects values smaller than 1 in absolute value for $d_{\text{geod}}(i,j)\geq 1$. Observe that here, the certificate is degenerate for small values of $d_{\text{geod}}(i,j)$ when $\epsilon=0$, meaning that the problem is unstable at the ``ghost'' symmetric edges. As $\epsilon\to \infty$, the certificate becomes non-degenerate.  
}

\section*{Conclusion}

In this paper, we have proposed the first theoretical analysis of the recovery performance of $\ell^1$-iOT. Much of this analysis can be extended to more general convex regularizers, such as the nuclear norm to promote low-rank Euclidean costs, for instance. Our analysis and numerical exploration support the conclusion that iOT becomes ill-posed and fails to maintain sparsity for overly small $\epsilon$. When approached from the perspective of graph estimations, unstable indices occur at smaller geodesic distances, highlighting the geometric regularity of iOT along the graph geometry.


\section*{Acknowledgements}

The work of G. Peyr\'e was supported by the European Research Council (ERC project NORIA)
and the French government under management of Agence Nationale de la Recherche as part of the
``Investissements d'avenir'' program, reference ANR19-P3IA-0001 (PRAIRIE 3IA Institute). 

\bibliography{references}
\bibliographystyle{apalike}
\appendix

\begin{figure}[!htb]
    \centering
    \begin{tabular}{c@{}c@{}c}
    \includegraphics[width=.33\linewidth]{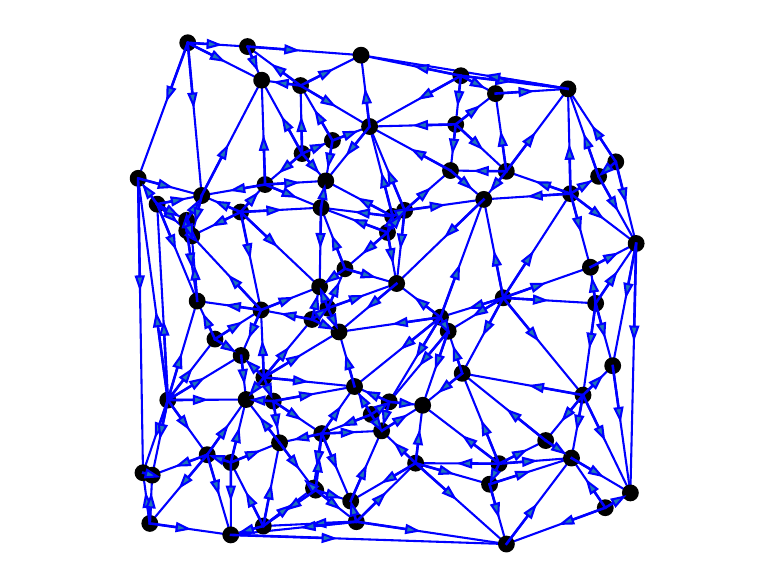}&
     \includegraphics[width=.32\linewidth]{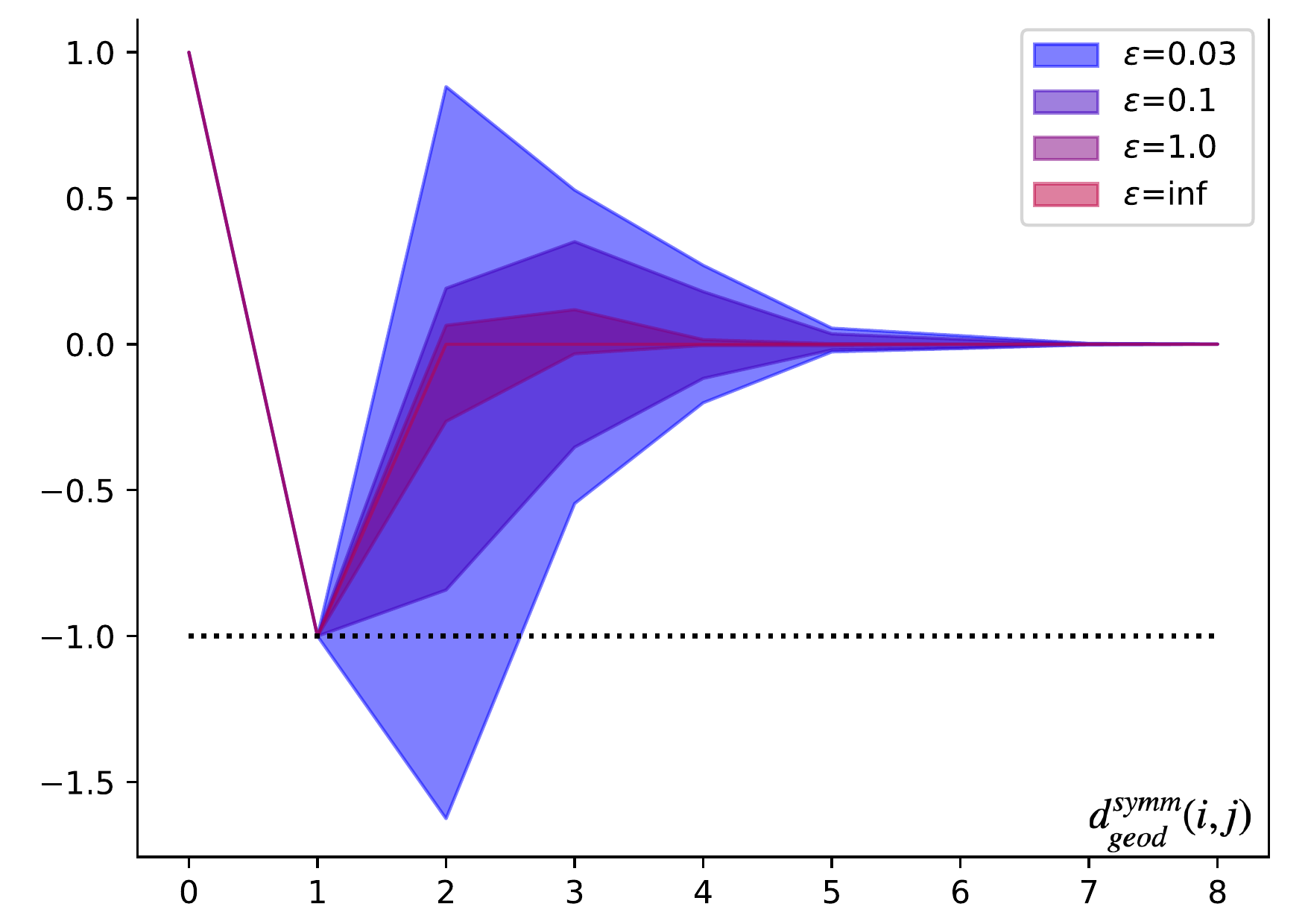}&
    \includegraphics[width=.32\linewidth]{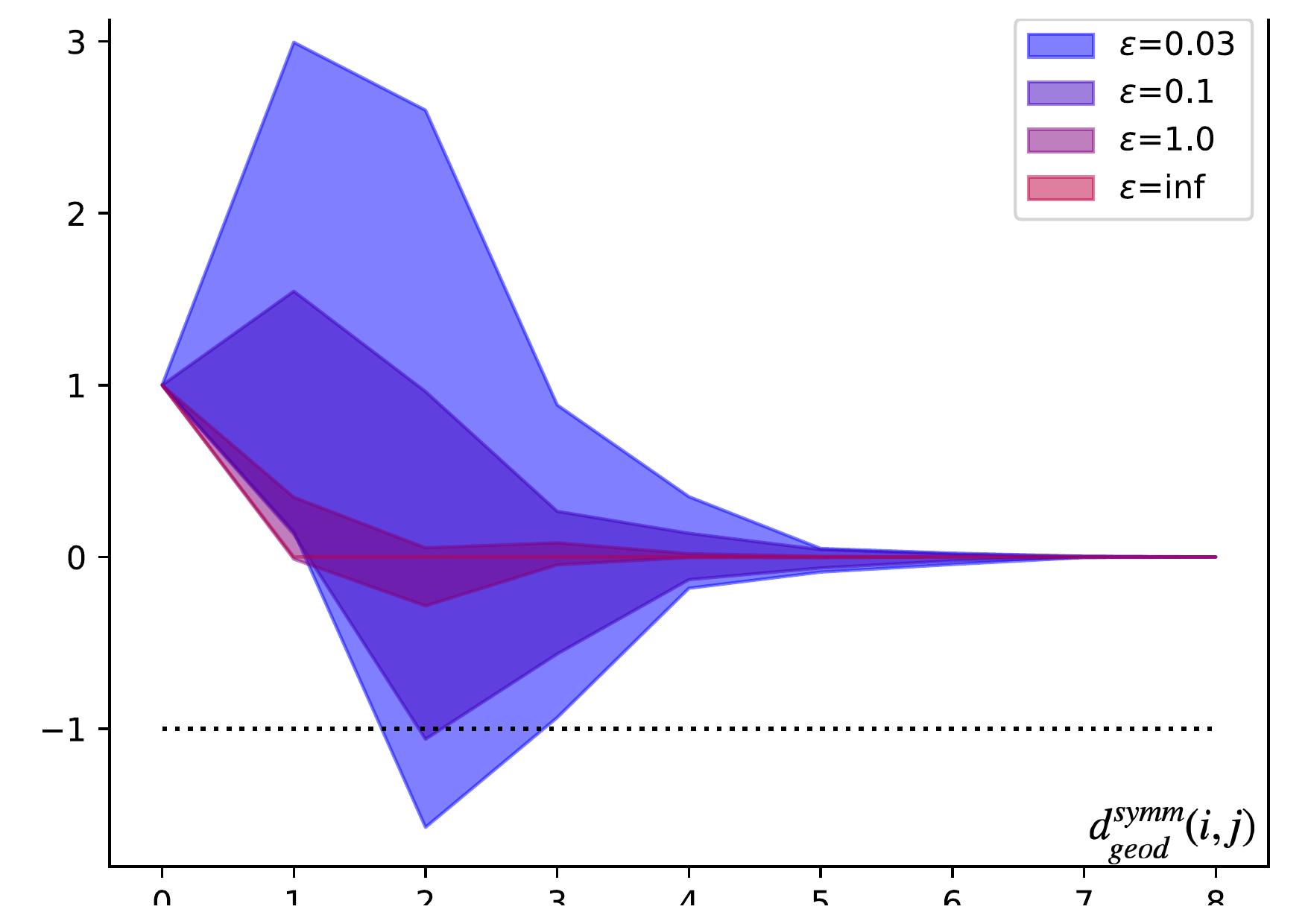}
 	\end{tabular}   
  	\caption{ \RED{Display of certificate values for a non-symmetric planar graph, for varying $\epsilon$ with edges only for $i > j$. Middle/Right: plots of the certificate values as a function of the \emph{geodesic distance $d_{\text{geod}}(i,j)$ of the symmetrized graph}. The middle plot show the values when restricted to $i\geq j$. The right plot shows the values restricted to $i\leq j$ (where there are no edges).  \label{fig:nonsym} }}
\end{figure}

\RED{
\section{Interpretations of the loss function}
As mentioned in Section \ref{iOT-Introductory section}, the loss $\mathcal{L}(A,\hat{\pi})$ can be recovered via maximum likelihood (ML) estimation \cite[Proposition 1]{dupuy2016estimating} and be regarded as an instance of a family of losses called Fenchel-Young losses. For the sake of completeness, this section explains this in more detail.

\subsection{Maximum Likelihood interpretation}
The map $A\to \text{Sink}(c_A,\epsilon)$, where $\text{Sink}(\cdot,\cdot)$ is defined in Section \ref{iOT-Introductory section}, can be seen as parameterizing a set of measures which are absolutely continuous with respect to $\alpha\otimes \beta$. By the standard duality result (see \cite{nutz2021introduction}):
\begin{itemize}
\item[1)] The density of $\text{Sink}(c_A,\epsilon)$ is given by $d \text{Sink}(c_A,\epsilon)/d(\alpha\otimes\beta)=\exp(c_A+f_A+g_A)$, where $f_A$ and $g_A$ solve the dual problem, \emph{i.e.,} $\sup_{f,g} \int f \,d\alpha+\int g\, d\beta -\int \exp(c_A+f+g)\, d(\alpha\otimes \beta)+1$;
\item[2)] The values of the primal and dual problems agree and are equal to $-\int f_A\, d\alpha -\int g_A \, d\beta$.
\end{itemize} 
Combining these two facts we obtain that
\begin{align*}
-\log\Big(\frac{d\text{Sink}(c_A,\epsilon)}{d(\alpha\otimes \beta)}\Big)=-c_A-f_A-g_A=-c_A+\sup_{\pi \in \mathcal{U}(\alpha,\beta)}\langle  c_A,\pi\rangle -\frac{\epsilon}{2}\text{KL}(\pi|\alpha\otimes \beta)=-c_A+W_{\hat \pi}(A),
\end{align*}
where $W_{\hat \pi}$ is defined in as in Section \ref{iOT-Introductory section}. Finally, taking expectation with respect to $\hat \pi$ yields 
\begin{align*}
    \mathbb{E}_{(x,y)\sim \hat \pi}\Big[-\log\Big(\frac{d\text{Sink}(c_A,\epsilon)}{d(\alpha\otimes \beta)}\Big)\Big]=\mathbb{E}_{(x,y)\sim \hat \pi}\big[-c_A\big]+\mathbb{E}_{(x,y)\sim \hat \pi}\big[W_{\hat \pi}(A)\big]=-\langle c_A,\hat \pi \rangle+W_{\hat \pi}(A)=\mathcal{L}(A,\hat{\pi}),
\end{align*}
thus establishing the connection with ML estimation.

\paragraph{Bilevel interpretation}
Note also that
\begin{align*}
    \argmin_A \mathcal{L}(A,\hat{\pi}) &= \argmin_A -\langle c_A,\hat \pi -\pi_A \rangle - \frac{\epsilon}{2}\kl(\pi_A|\hat \al\otimes \hat \beta)\\
    &=\argmin_A -\langle c_A,\hat \pi -\pi_A \rangle - \frac{\epsilon}{2}\kl(\pi_A|\hat \al\otimes \hat \beta) + \frac{\epsilon}{2}\kl(\hat \pi|\hat \al\otimes \hat \beta)
\end{align*}
where $\pi_A = \argmin_{\pi\in\Uu(\hat \al,\hat\beta)} \dotp{c_A}{\pi} - \kl(\pi|\hat \al\otimes \hat \beta)$. Since $\hat \pi, \pi_A$ have the same marginals, 
$$
\langle c_A,\hat \pi -\pi_A \rangle  = 
\langle c_A + f_A + g_A,\hat \pi -\pi_A \rangle   = \frac{\epsilon}{2} \dotp{\log(\pi_A/(\hat \al\otimes \hat \beta))}{\hat \pi - \hat \pi_A}.
$$
We therefore have
$$
\argmin_A \Ll(A, \hat \pi) = \argmin_A  \kl(\hat \pi|\pi_A) , \qwhereq \pi_A = \argmin_{\pi\in\Uu(\hat \al,\hat\beta)} \dotp{c_A}{\pi} - \kl(\pi|\hat \al\otimes \hat \beta).
$$
\subsection{Fenchel-Young Loss interpretation}
It is argued in \cite{blondel2020learning} that, associated to a prediction rule of the form \begin{align*}
    \hat{y}(\theta)=\argmax_{\mu \in \mbox{dom}(\Omega)}\langle \theta,\mu\rangle-\Omega(\mu),
\end{align*}
there is a natural loss function that the authors term Fenchel-Young loss and that is given by
\begin{align*}
    L_\Omega(\theta;y)\eqdef \Omega^\ast(\theta)+\Omega(y)-\langle \theta,y\rangle.
\end{align*}
With this in mind,  let $\Omega(\pi)=\text{KL}(\pi|\alpha\otimes\beta)+\iota_{\mathcal{U}(\alpha,\beta)}(\pi)$ where $\iota_{\mathcal{U}(\alpha,\beta)}(\cdot)$ is the indicator function of $\mathcal{U}(\alpha,\beta)$, and note that 
\begin{align*}
    \mathcal{L}(A,\hat{\pi})=L_{\Omega}(c_A;\hat{\pi})-\Omega(\hat{\pi}).
\end{align*}
Since, for the purposes of minimization with respect to $A$, the term $\Omega(\hat{\pi})$ is irrelevant, we see that finding a minimizer of \eqref{eq:primal} amounts to finding a minimizer of the $l_1$ regularized Fenchel-Young loss associated with $ \text{KL}(\cdot|\alpha\otimes\beta)+\iota_{\mathcal{U}(\alpha,\beta)}(\cdot)$.
}
\section{The finite sample problem} \label{sec:finite_sample}
As mention in Section \ref{sec:finite_sample_main}, we do not observe the full coupling  $\hat \pi = \sink(c_\Asoln, \epsilon)$, but only the  the empirical measure $\hat \pi_n = \frac1n\sum_{i=1}^n \delta_{x_i,y_i}$ with $(x_i,y_i)\overset{iid}{\sim} \pio$.
%
We show here that the problem $\iot(\hat \pi_n)$ can be formulated entirely in finite dimensions as follows. 
Let
$$
H(P|Q) \eqdef \sum_{i,j} P_{i,j} ( \log(P_{i,j}/Q_{i,j}) - 1).
$$
Note that $\hat \pi_n$ has marginals $\hat a_n =\frac1n \sum_{i=1}^n \delta_{x_i}$ and $\hat b_n =\frac1n \sum_{i=1}^n \delta_{y_i}$.
We can interpret $\hat \pi_n$ as the matrix $\hat P_n = \frac{1}{n}\Id_{n\times n}$ and the ``noisy''  primal problem can be equivalently written as
\begin{equation}\label{eq:primal_n}
    \min_{A\in\RR^s} \sup_{P\in\RR^{n\times n}_+} \lambda\norm{A}_1 + \dotp{\Phi_n A}{P-\hat P_n } -\frac{\epsilon}{2} H(P)\quad s.t.\quad P\ones = \frac1n \ones \qandq P^\top \ones = \frac1n \ones, \tag{$\Pp_n$}
\end{equation}
where we write $H(P)\eqdef H( P |\frac{1}{n^2}\ones\otimes \ones) $ and
$
\Phi_n A = \sum_k A_k C_k, \qwhereq C_k =( \cost_k(x_i,y_j))_{i,j\in [n]}  \in \RR^{n\times n}.
$
Note that the finite-dimensional problem has the same invariances as \eqref{eq:primal}, so, we will take $C_k$ to be centred so that for all $i$, $\sum_i (C_k)_{i,j} = 0$ and for all $j$, $\sum_j (C_k)_{i,j} = 0$.
The finite sample Kantorovich formulation is
\begin{equation}\label{eq:kt_n}
 \inf_{A,F,G \in \Ss_n}  \Kk_n(A,F,G)  \qwhereq \Kk_n(A,F,G) \eqdef \Jj_n(A,F,G)+ \lambda \norm{A}_1
 \quad\text{and}
 \tag{$\Kk_n$}
\end{equation}
$$
 \Jj_n(A,F,G) \eqdef -
\sum_{i,j} \pa{F_i\oplus G_j + (\Phi_n A)_{i,j}}{ \pa{\hat P_n}_{i,j}} +  \frac{\epsilon}{2 n^2} \sum_{i,j} \exp\pa{\frac{2}{\epsilon}(F_i+ G_j + (\Phi_n A)_{i,j}}, 
$$
and we restrict the optimization of \eqref{eq:kt_n} over
        $
        \Ss_n \eqdef \enscond{(A,F,G)\in\RR^s\times \RR^n\times \RR^n}{\sum_j G_j = 0}.
        $

\section{Proofs for Section \ref{sec:duality}}
\subsection{Proof of Proposition \ref{InverseOTLossFuncPropertiesProp} (differentiability of $W$)}\label{ProofOfHessianFormSect}
For the strict convexity of $W(A)$ see Lemma 3 in \cite{dupuy2014personality}. The gradient formula can also be found in \cite{dupuy2014personality}  and trivially follows from the envelope theorem because the optimization problem $W(A)$ has a unique solution. We will only give a proof of the Hessian formula.

The formula for the Hessian follows from the formula for the gradient provided we show that the density $\pi_A$ is continuously differentiable with respect to $A$. The fact that we can swap the order of the operator $\Phi^\ast$ and partial differentiation follows from the conditions for differentiability under the integral sign which holds since the measures are compactly supported. Without loss of generality, let $\epsilon = 1$. Then, the optimizer in $W(A)$ is of the form \cite{santambrogio2015optimal}:
\begin{align*}
\pi_A(x,y)=\exp \Big(u_A(x)+v_A(y)+c_A(x,y)\Big),
\end{align*} 
where $u_A(\cdot)$ and $v_A(\cdot)$ satisfy
\begin{align*}
u_A(x)&=-\log \int_{\mathcal{Y}} \exp\Big(v_A(y)+c_A(x,y)\Big)\, d\beta(y),\quad \alpha\text{-a.s.}\\
v_A(y)&=-\log \int_{\mathcal{X}}\exp\Big(u_A(x)+c_A(x,y)\Big)\, d\alpha(x),\quad \beta\text{-a.s.}
\end{align*}
 It is known (see \emph{e.g.} \cite{nutz2021introduction}) that the functions $u_A(\cdot)$  and $v_A(\cdot)$ inherit the modulus of continuity of the cost (in this case the map $(x,y)\to c_A(x,y))$) and, hence, since we are assuming the measures to be compactly supported, it follows that $u_A \in L^2(\alpha)$ and $v_A \in L^2(\beta)$. Moreover, if $(u_A,v_A)$ solve these two equations then, for any constant $c$, the pair $(u_A+c,v_A-c)$   is also a solution and, hence, to eliminate this ambiguity we consider solutions in $\mathcal{H}=L^2(\alpha)\times L_0^2(\beta)$, where $L_0^2(\beta)=\enscond{ g \in L^2(\beta)}{\int g\, d\beta(y)=0}$.

To show that $W$ is twice differentiable,  since $\nabla W(A) = \Phi^* \pi_A$, it is sufficient to show that $A \mapsto u_A$ and $A \mapsto v_A$ are differentiable. To this end,
 we will apply the Implicit Function Theorem (in Banach spaces) to the map
\begin{align*}
F:\mathcal{H}\times \mathbb{R}^{s} &\to L^2(\alpha)\times L^2(\beta)\\
\begin{bmatrix}
u\\
v\\
A
\end{bmatrix}&\to \begin{bmatrix}u+\log \int_{\mathcal{Y}} \exp\Big(v(y)+c_A(x,y)\Big)\, d\beta(y)\\
v+\log \int_{\mathcal{X}}\exp\Big(u(x)+c_A(x,y)\Big)\, d\alpha(x)
\end{bmatrix},
\end{align*}
since we have $F(u_A,v_A,A)=0$. The partial derivative of $F$ at $(u_A,v_A,A)$ , denoted by $\partial_{u,v}F(u_A,v_A,A)$, is the linear map defined by
\begin{align}\label{JacobianOfF}
\Big(\partial_{u,v} F(u_A,v_A,A)\Big)(f,g)=\Big(f+\int_{\mathcal{Y}} p_A(\cdot,y)g(y)\, d\beta(y), g+\int_\mathcal{X} q_A(x,\cdot)f(x)\, d\alpha(x)\Big),
\end{align}
where 
\begin{align*}
p_A(x,y)=\frac{\exp\Big(v_A(y)+c_A(x,y)\Big)}{\int_\mathcal{Y}\exp\Big(v_A(y)+c_A(x,y)\Big)\, d\beta(y)}\text{ and  } q_A(x,y)=\frac{\exp\Big(u_A(x)+c_A(x,y)\Big)}{\int_\mathcal{X}\exp\Big(u_A(y)+c_A(x,y)\Big)\, d\beta(x)}.
\end{align*}
Note that, since $F(u_A,v_A,A)=0$,   $p_A(x,y)=q_A(x,y)=\pi_A(x,y)$. Moreover, since $\pi_A$ has marginals $\alpha$ and $\beta$, it follows that
\begin{equation}
\begin{aligned}\label{InnerProductPartialF}
&\Big\langle (f,g),\Big(\partial_{u,v} F(u_A,v_A,A)\Big)(f,g)\Big\rangle\\
=&\int_\mathcal{X}f(x)^2\,d\alpha(x)+\int_{\mathcal{X}\times \mathcal{Y}} 2f(x)g(y)\pi_A(x,y)\, d(\alpha\otimes \beta)(x,y)
+\int_\mathcal{Y}g(y)^2\, d\beta(y)\\
=&\int_{\mathcal{X}\times \mathcal{Y}} \Big(f(x)+g(y)\Big)^2\pi_A(x,y)\, d(\alpha\otimes \beta)(x,y)
\end{aligned}.
\end{equation}
This shows that $\partial_{u,v} F(u_A,v_A,A)$ is invertible -- the last line of \ref{InnerProductPartialF} is zero if and only if $f\oplus g\equiv 0$ and since $g \in L^2_0(\beta)$ it follows that $g=0$ and $f=0$.

To conclude the proof we need to show that $\pa{\partial_{u,v} F(u_A,v_A,A)}^{-1}$ is a bounded operator (see \emph{e.g.} \cite{deimling2010nonlinear} for the statement of IFT in Banach spaces) and to show this it is enough to show that, for some constant $C$, $$\Big\|\Big(\partial_{u,v} F(u_A,v_A,A)\Big)(f,g)\Big\|\geq C\|(f,g)\|.$$ 
This follows from \ref{InnerProductPartialF} and the fact that there exists a constant $C$ such that $\pi_A(x,y)\geq C$ for all $x$ and $y$ (see \emph{e.g.} \cite{nutz2021introduction} for the existence of $C$). In fact, from $g\in L^2_0(\beta)$, we obtain $\int (f(x)+g(y)))^2\, d(\alpha\otimes \beta)(x,y)=\|(f,g)\|^2$ and, hence, \ref{InnerProductPartialF} implies that \begin{align*}
\Big\langle (f,g),\Big(\partial_{u,v} F(u_A,v_A,A)\Big)(f,g)\Big\rangle\geq C\|(f,g)\|^2
\end{align*}
and from Cauchy-Schwarz applied to the left-hand-side we obtain
\begin{align*}
\Big\|\Big(\partial_{u,v} F(u_A,v_A,A)\Big)(f,g)\Big\|\geq C\|(f,g)\|,
\end{align*}
thus concluding the proof.

\subsection{Connection of the certificate \eqref{FuchsCert} with optimization problem \eqref{eq:primal}}

As mentioned in Section \ref{sec:intuition-cert}, the vector $z^\lambda$ provides insight into support recovery since $\Supp(A^\lambda) \subseteq \enscond{i}{z^\lambda_i = \pm 1}$. In this section, we provide the proof to  Proposition \ref{MinimalNormCertProposi}, which shows that as $\lambda$ converges to $0$,  $z^\lambda$  converges to the solution of a quadratic optimization problem \eqref{MnCertificate} that we term the \emph{minimal norm certificate}.

Note that the connection with \eqref{FuchsCert} can now be established by noting that if the minimal norm certificate is non-degenerate then the inequality constraints (which correspond to the complement of the support of $\Asoln$) in \eqref{MnCertificate} can be dropped since they are inactive; in this case \eqref{MnCertificate} reduces to a quadratic optimization problem with only equality constraints and whose solution can be seen to be \eqref{FuchsCert} which will thus be non-degenerate as well. The converse is clear. So, under non-degeneracy, \eqref{FuchsCert} can be seen as the \emph{limit} optimality vector and determines the support of $A^\lambda$ when $\lambda$ is small.

To prove  Proposition  \ref{MinimalNormCertProposi}, we first show that the vector $z^\lambda$ coincides with the solution to a dual problem of \eqref{eq:primal}.

\begin{prop}\label{prop:dual} Let $W^*$ be the convex conjugate of $W$.
Problem \eqref{eq:primal} admits a dual  given by
\begin{equation}\label{dualOfiOT}
\argmin_{z} 
 W^\ast(\hat \Sigma_{xy}-\lambda z) 
 \quad\text{ subject to }\quad
	\|z\|_\infty \leq 1, 
\end{equation}
where $\hat \Sigma_{xy}=\Phi^\ast \hat \pi$ . 
Moreover, a pair of primal-dual solutions $(A^\lambda,z^\lambda)$ is related by 
\begin{equation}\label{KKTConditions1}
z^\lambda= -\tfrac{1}{\lambda} \nabla_A \Ll(A^\lambda,\hat\pi)
\quad\text{and}\quad 
z^\lambda \in \partial \|{A^\lambda}\|_1.
\end{equation}
\end{prop}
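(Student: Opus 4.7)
The plan is to derive the dual by dualizing the $\ell_1$ penalty via its support-function representation and then exchanging $\inf$ and $\sup$ (equivalently one could apply Fenchel--Rockafellar duality to the decomposition $\Ff = h + \lambda \|\cdot\|_1$ with $h(A) \eqdef W(A) - \dotp{A}{\hat\Sigma_{xy}}$). I choose the saddle-point route because it makes the KKT relation \eqref{KKTConditions1} transparent. First, using $\dotp{c_A}{\hat\pi} = \dotp{\Phi A}{\hat\pi} = \dotp{A}{\Phi^\ast \hat\pi} = \dotp{A}{\hat\Sigma_{xy}}$ and $\lambda \|A\|_1 = \sup_{\|z\|_\infty \leq 1} \lambda \dotp{z}{A}$, I recast \eqref{eq:primal} as
\begin{equation*}
\inf_{A \in \R^s} \sup_{\|z\|_\infty \leq 1} L(A,z), \qquad L(A,z) \eqdef W(A) - \dotp{A}{\hat\Sigma_{xy} - \lambda z}.
\end{equation*}

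Second, I would invoke Sion's minimax theorem to swap $\inf_A$ and $\sup_z$. The hypotheses hold: the feasible set $\{\|z\|_\infty \leq 1\}$ is convex and compact in $\R^s$; $L$ is affine (hence continuous and concave) in $z$; and $L$ is strictly convex and continuous in $A$ since $W$ is finite and strictly convex on $\R^s$ by Proposition \ref{InverseOTLossFuncPropertiesProp}. After the swap, the inner infimum over $A$ is precisely the convex conjugate,
\begin{equation*}
\inf_A L(A,z) = - \sup_A \bigl( \dotp{A}{\hat\Sigma_{xy} - \lambda z} - W(A) \bigr) = - W^\ast(\hat\Sigma_{xy} - \lambda z),
\end{equation*}
so flipping the outer sign recasts the resulting $\sup$ as \eqref{dualOfiOT} and simultaneously yields strong duality with a zero gap.

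Third, the KKT relation follows from differentiability of $\Ll$ (Proposition \ref{InverseOTLossFuncPropertiesProp}) and the Fermat rule at the primal optimum $A^\lambda$: $0 \in \lambda\, \partial \|A^\lambda\|_1 + \nabla_A \Ll(A^\lambda, \hat\pi)$. Defining $z^\lambda \eqdef - \tfrac{1}{\lambda} \nabla_A \Ll(A^\lambda, \hat\pi)$ then reads $z^\lambda \in \partial \|A^\lambda\|_1$, which forces $\|z^\lambda\|_\infty \leq 1$ (dual feasibility). Strong duality, established in the previous step, identifies $z^\lambda$ as a dual optimum: $L(A^\lambda, z^\lambda)$ attains the common saddle-point value $\inf_A \sup_z L = \sup_z \inf_A L$.

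The main technical point is justifying the minimax exchange together with existence of both primal and dual optima. Dual existence is immediate from Weierstrass (compact feasible set and lower semicontinuity of $z \mapsto W^\ast(\hat\Sigma_{xy} - \lambda z)$). Primal existence and uniqueness follow from strict convexity of $\Ff$ (inherited from $W$) and coercivity supplied by the $\ell_1$ penalty on the finite-dimensional space $\R^s$, which also ensures that the saddle-point value is finite so Sion's theorem applies without gap. None of these points is genuinely delicate, but they must be assembled carefully since $W$ is defined only implicitly through an entropic optimal transport problem.
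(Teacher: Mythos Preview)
Your proposal is correct and essentially the same as the paper's proof. The paper simply rewrites $\Ll(A,\hat\pi)=W(A)-\langle A,\hat\Sigma_{xy}\rangle$, invokes the Fenchel Duality Theorem on the splitting $W(\cdot)-\langle\cdot,\hat\Sigma_{xy}\rangle$ versus $\lambda\|\cdot\|_1$, identifies $(\lambda\|\cdot\|_1)^\ast$ as the indicator of the $\lambda$-radius $\ell_\infty$ ball, and changes variables $z=w/\lambda$; your Sion-based saddle-point argument is the same computation unpacked rather than cited as a black box, and you even note the Fenchel--Rockafellar route as an alternative.
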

\begin{proof}
Observe that we can write $\Ll(A,\hat \pi)$ as
\begin{align*}
\Ll(A,\hat \pi)&= W(A)-\int_{\mathcal{X}\times \mathcal{Y}} (\Phi A)(x,y)\, d\hat \pi=W(A)-\langle \Phi^\ast \hat \pi,A\rangle_F=W(A)-\langle A,\hat \Sigma_{xy}\rangle.
\end{align*}
The \emph{Fenchel Duality Theorem} (see $\emph{e.g.}$ \cite{borwein2006convex}) yields a dual of \eqref{eq:primal} given by
\begin{align*}
\argmin_w W^\ast(\hat  \Sigma_{xy}-w)+(\lambda \|\cdot\|_1)^\ast(w).
\end{align*} 
To conclude the proof just note that the Fenchel conjugate of $\lambda\|\cdot\|_1$ is the indicator of the set $\lbrace v: \|v\|_\infty\leq \lambda\rbrace$ and make a change of variable $z \eqdef 1/\lambda w$ to obtain \ref{dualOfiOT}. The relationship between any primal-dual pair in Fenchel Duality can also be found in \cite{borwein2006convex}.

\end{proof}

\begin{proof}[Proof of Proposition \ref{MinimalNormCertProposi}]
We begin by noting that $W^\ast$ is of class $C^2$  in a neighborhood of $\Sigma_{xy}$ and that 
\begin{align}\label{SecondDerivativeAtCovariance}
\nabla^2 W^\ast(\hat \Sigma_{xy})=\Big(\nabla^2 W (\Asoln)\Big)^{-1}.
\end{align}
To see this, note  that Proposition \ref{InverseOTLossFuncPropertiesProp}  together with the assumption on $\hat \pi$ implies that
\begin{align*}
\hat \Sigma_{xy}=\nabla W(\Asoln).
\end{align*}
Moreover, since $W(A)$ is twice continuously differentiable and strictly convex (see Proposition \ref{InverseOTLossFuncPropertiesProp}), it follows (see \emph{e.g.} Corollary 4.2.9 in \cite{hiriart1993conjugacy}) that $W^\ast(\cdot)$ is $C^2$ and strictly convex in a neighborhood of $\hat \Sigma_{xy}$ and that \eqref{SecondDerivativeAtCovariance} holds.

Now observe that, since $\nabla W^\ast(\hat \Sigma_{xy})=\nabla W^\ast\big(\nabla W (\Asoln)\big)=\Asoln$, we can rewrite $\partial\norm{\Asoln}_1$  as
\begin{equation}
\partial\norm{\Asoln}_1=\argmin_{z} \big\langle -z,\nabla W^\ast(\hat \Sigma_{xy})\big\rangle \quad\text{subject to}\quad \|z\|_\infty\leq 1
\end{equation}
Observe that since $z^{ambda}$ are uniformly bounded vectors due to the constraint set in \ref{dualOfiOT},  there is a convergent subsequence converging to some $z^\ast$. We later deduce that all limit points are the same and hence, the full sequence $z^\lambda$ converges to $z^\ast$. Let $\lambda_n$ be such that $\lim_{\lambda_n\to 0} z^{\lambda_n}=z^\ast$, and let $z^0$ be any element in $\partial\norm{\Asoln}_1$. We have that
\begin{align*}
\big\langle -z^0,\nabla W^\ast(\hat \Sigma_{xy})\big\rangle&\leq \big\langle -z^{\lambda_n},\nabla W^\ast(\hat \Sigma_{xy})\big\rangle\leq \frac{1}{\lambda_n}\Big(W^\ast\big(\hat \Sigma_{xy}-\lambda_n z^{\lambda_n}\big)-W(\hat \Sigma_{xy})\Big)\\
&\leq \frac{1}{\lambda_n}\Big(W^\ast\big(\hat \Sigma_{xy}-\lambda_n z^0\big)-W(\hat \Sigma_{xy})\Big),
\end{align*}
where the first inequality is the optimality of $z^0$, the second inequality is the gradient inequality of convex functions and the last inequality follows from the optimality of $z^\lambda$. Taking the limit as $\lambda_n\to 0$ we obtain that 
\begin{align*}
\big\langle -z^0,\nabla W^\ast(\hat \Sigma_{xy})\big\rangle=\big\langle -z^\ast,\nabla W^\ast(\hat \Sigma_{xy})\big\rangle,
\end{align*} 
showing that $z^\ast \in \partial\norm{\Asoln}_1$. We now finish the proof by showing that 
\begin{align}\label{MinimalNormCondition}
\Big\langle z^\ast,\nabla^2 W^\ast(\hat \Sigma_{xy})z^\ast\Big\rangle\leq \Big\langle z^0,\nabla^2 W^\ast(\hat \Sigma_{xy})z^0\Big\rangle.
\end{align}
Since $W^\ast(\cdot)$ is $C^2$ in a neighborhood of $\hat \Sigma_{xy}$, Taylor's theorem ensures that there exists a remainder function $R(x)$ with  $\lim_{x\to \hat \Sigma_{xy}} R(x)=0$ such that
\begin{align*}
&\langle -z^{\lambda_n},\nabla W^\ast(\hat \Sigma_{xy})\rangle+\frac{\lambda_n}{2}\big\langle z^{\lambda_n},\nabla^2W^\ast(\hat \Sigma_{xy})z^{\lambda_n}\big\rangle+R(\hat \Sigma_{xy}+\lambda_n z^{\lambda_n})\lambda^2_n\\
=& \frac{1}{\lambda_n}\Big(W^\ast\big(\hat \Sigma_{xy}-\lambda_n z^{\lambda_n}\big)-W^\ast\big(\hat \Sigma_{xy}\big)\Big)\leq \frac{1}{\lambda_n}\Big(W^\ast\big(\hat \Sigma_{xy}-\lambda_n z^0\big)-W^\ast\big(\hat \Sigma_{xy}\big)\Big)\\
=& \langle -z^0,\nabla W^\ast(\hat \Sigma_{xy})\rangle+\frac{\lambda_n}{2}\big\langle z^0,\nabla^2W^\ast(\hat \Sigma_{xy})z^0\big\rangle+R(\hat \Sigma_{xy}+\lambda_n z^0)\lambda^2_n\\
\leq & \langle -z^{\lambda_n},\nabla W^\ast(\hat \Sigma_{xy})\rangle+\frac{\lambda_n}{2}\big\langle z^0,\nabla^2W^\ast(\hat \Sigma_{xy})z^0\big\rangle+R(\hat \Sigma_{xy}+\lambda_n z^0)\lambda^2_n,
\end{align*}
where we used the optimality of $z^0$ and of $z^{\lambda_n}$. We conclude that
\begin{align*}
\frac{1}{2}\big\langle z^{\lambda_n},\nabla^2W^\ast(\hat \Sigma_{xy})z^{\lambda_n}\big\rangle+R(\hat \Sigma_{xy}+\lambda_n z^{\lambda_n})\lambda_n\leq \frac{1}{2}\big\langle z^0,\nabla^2W^\ast(\hat \Sigma_{xy})z^0)\big\rangle+R(\hat \Sigma_{xy}+\lambda_n z^0)\lambda_n.
\end{align*}
Taking the limit establishes \ref{MinimalNormCondition}. Since $z^0$ was an arbitrary element in $\partial\norm{\Asoln}_1$, we obtain that the limit of $z^{\lambda_n}$ is
\begin{align*}
z^\ast=\argmin_{z}  \Big\langle z,\Big(\nabla^2 W(\Asoln)\Big)^{-1}z\Big\rangle \quad \text{subject to}\quad z \in \partial\norm{\Asoln}_1
\end{align*}
where we used \ref{SecondDerivativeAtCovariance}. Finally, 
observe that $z^\ast$ was an arbitrary limit point of $z^\lambda$ and we showed that all limit points are the same; this is enough to conclude the result.
\end{proof}

\section{Proof of Proposition \ref{prop:sample_complexity}}

The proof of this statement relies on strong convexity of $\Jj_n$. Similar results have been proven in the context of entropic optimal transport  (e.g. \cite{genevay2019sample,mena2019statistical}). Our proof is similar to the approach taken in   \cite{rigollet2022sample}.

Let $(A_\infty,f_\infty,g_\infty)$ minimize \eqref{eq:kt_inf}. Note that
$p_\infty \alpha\otimes \beta$ with $$p_\infty(x,y) = \exp\pa{\frac{2}{\epsilon}\pa{\Phi A_\infty(x,y) + f_\infty(x) + g_\infty(y)}}$$ minimizes \eqref{eq:primal}.  Let $$P_\infty =\frac{1}{n^2}( p_\infty(x_i,y_j))_{i,j}, \quad F_\infty = (f_\infty(x_i))_i\qandq G_\infty = (g_\infty(y_j))_j.
$$
\RED{Note that by optimality of $A_\infty$, $\norm{A_\infty}_1 \leq \norm{\Asoln}_1$ -- this can be seen by comparing the objective \eqref{eq:primal} at $A_\infty$ and $\Asoln$.} Moreover, due to the uniform bounds on $f_\infty,g_\infty$ from Lemma \ref{lem:boundedness}, $p_\infty$ is uniformly bounded away from 0 by $\exp(-C/\lambda)$ for some constant $C$ that depends on $\hat\pi$.

Let  $P_n$ minimise \eqref{eq:primal_n}, we know it is of the form
$$
P_n = \frac{1}{n^2} \exp\pa{\frac{2}{\epsilon}\pa{\Phi_n A_n + F_{n} \oplus G_n}}.
$$
for vectors $A_n, F_n, G_n$.

The `certificates' are $\Phi^* \phi_\infty$ and $z_n \eqdef \Phi_n^* \phi_n$  where $$\phi_\infty = \frac{1}{\lambda}\pa{p_\infty \alpha\otimes \beta - \pio} \qandq \phi_n = \frac{1}{\lambda}\pa{P_n - \hat P_n}.$$ 
Note that $z_n  = \Phi^* \phi_\infty   =  -\frac{1}{\lambda} \nabla_A \Ll(A^\infty, \pio)$.
The goal is to bound $\norm{\Phi^* \phi_\infty - \Phi_n^* \phi_n}_\infty$ so that nondegeneracy of $z_\infty $ would imply nondegeneracy of $\Phi_n^* \phi_n$. Note that $\Phi_n^* P_n = \Phi^* \hat \pi_n$. So, by the triangle inequality,
\begin{align*}
\norm{\Phi^* \phi_\infty - \Phi_n^* \phi_n}_\infty& \leq\frac{1}{\lambda}
\norm{\Phi^* \pa{p_\infty \alpha\otimes \beta} - \Phi_n^* P_n}_\infty +\frac{1}{\lambda} \norm{\Phi^* \pa{\hat \pi_n - \pio}}_\infty\\
&\leq \frac{1}{\lambda}
\norm{\Phi_n^* P_\infty - \Phi_n^* P_n}_\infty + \frac{1}{\lambda}
\norm{\Phi^* \pa{p_\infty \alpha\otimes \beta} - \Phi_n^* P_\infty}_\infty +\frac{1}{\lambda} \norm{\Phi^* \pa{\hat \pi_n - \pio}}_\infty
\end{align*}
The last two terms on the RHS can be controlled using Proposition \ref{prop:concentration_cost}, and are bounded by $\Oo(t n^{-1/2})$  with probability at least $1-\Oo(\exp(-t^2))$ for $t>0$. For the first term on the RHS, letting $Z = P_\infty - P_n$, 
\begin{align*}
\norm{\Phi_n^* Z}_\infty &= \norm{ \sum_{i,j=1}^n \cost(x_i,y_j) Z_{i,j} }_\infty\\
&\leq   \norm{\cost}_\infty \sqrt{ \frac{1}{n^2} \sum_{i,j} \pa{ \exp\pa{\frac{2}{\epsilon}(\Phi_n A_n + F_n \oplus G_n) }- \exp\pa{\frac{2}{\epsilon}(\Phi_n A_\infty + F_\infty \oplus G_\infty) } }_{i,j}^2}
\end{align*}

Let $L \eqdef 2( \norm{\Phi_n A_n + F_n\oplus G_n)}_\infty\vee\norm{\Phi_n A_\infty + F_\infty \oplus G_\infty )}_\infty)$. According to the Lipschitz continuity of the exponential
\begin{align}
&\frac{1}{n^2} \sum_{i,j} \pa{ \exp\pa{\frac{2}{\epsilon}(\Phi_n A_n + F_n \oplus G_n) }- \exp\pa{\frac{2}{\epsilon}(\Phi_n A_\infty + F_\infty \oplus G_\infty) } }_{i,j}^2\\
&\leq    \frac{4\exp(L/\epsilon)}{\epsilon^2 n^2} \sum_{i,j} \pa{ (\Phi_n A_n + F_n \oplus G_n) - (\Phi_n A_\infty + F_\infty \oplus G_\infty)  }_{i,j}^2\\
&\leq  \frac{12\exp(L/\epsilon) }{\epsilon^2 } \pa{ \frac{1}{n^2}\sum_{i,j} (\Phi_n A_n - \Phi_n A_\infty)_{i,j}^2 + \frac{1}{n} \sum_{i}(F_n - F_\infty)_i^2 + \frac{1}{n} \sum_{j}(G_n - G_\infty)_j^2}\label{eq:error_2}
\end{align}
By strong convexity properties of $\Jj_n$ and hence $\Kk_n$, it can be shown (see Prop \ref{prop:strongconvexity}) that \eqref{eq:error_2}  is upper bounded up to a constant by
\begin{align*}
    & \epsilon^{-1} \exp(L/\epsilon) \pa{ \Kk_n(F_\infty,G_\infty,A_\infty) - \Kk_n(F_n,G_n,A_n)     }\\
      \leq & \frac{  \exp(L/\epsilon)}{4} \Bigg( \norm{n^{-2}(\Phi^*_n\Phi_n)^{-1}}\norm{(\partial_A \Jj_n(A_\infty,F_\infty,G_\infty) +\lambda \xi_\infty)}^2 \\
      &\qquad\qquad
+n\norm{\partial_F \Jj_n(A_\infty,F_\infty,G_\infty)}^2 + n\norm{\partial_G \Jj_n(A_\infty,F_\infty,G_\infty)}^2\Bigg)
\end{align*}
where $\xi_\infty = \frac{1}{\lambda}(\Phi^* \pio - \Phi^* (p_\infty \alpha\otimes \beta))\in \partial\norm{A_\infty}_1$.
By Lemma \ref{lem:bound1} and Lemma \ref{lem:boundedness}, $L=\Oo(\norm{\Asoln}_1)$ with probability at least $1-\Oo(\exp(-t^2))$ if $\lambda\gtrsim tn^{-\frac12}$.
Finally,
\begin{align*}
&\norm{(\partial_A \Jj_n(A_\infty,F_\infty,G_\infty) +\lambda \xi_\infty)}^2 = \norm{\Phi_n^* \hat P +   \Phi_n^* P_\infty+ \lambda\xi_\infty}^2\\
&\qquad\qquad\qquad\qquad\qquad\qquad\qquad
\leq 2\pa{\norm{\Phi_n^* \hat P - \Phi^* \pio}^2 + \norm{\Phi_n^* P_\infty - \Phi^* (p_\infty \alpha\otimes \beta)}^2}\\
&n\norm{\partial_F \Jj_n(A_\infty,F_\infty,G_\infty)}^2_2 = \frac{1}{n} \sum_{i=1}^n \pa{1- \frac{1}{n} \sum_{j=1}^n p_\infty(x_i,y_j)}^2 \\
&n\norm{\partial_G \Jj_n(A_\infty,F_\infty,G_\infty)}^2_2\Bigg) = \frac{1}{n} \sum_{j=1}^n  \pa{1- \frac{1}{n} \sum_{i=1}^n p_\infty(x_i,y_j)}^2 \Bigg) .
\end{align*}
We show in Propositions \ref{prop:concentration_marginal}, \ref{prop:concentration_cost1} and \ref{prop:concentration_cost} that these are bounded by $\Oo(t^2 n^{-1})$ with probability at least $1-\Oo(\exp(-t^2))$  and from  Proposition \ref{prop:concentration_injective},  assuming $n\gtrsim \log(2s)$, $\norm{(n^{-2}\Phi_n^*\Phi_n)^{-1}}\lesssim 1$ with probability at least $1-\Oo(\exp(- n))$.

So, for some constant $C>0$, $\lambda \norm{\Phi^* \phi_\infty - \Phi_n^* \phi_n}_\infty \lesssim \exp(C\norm{\Asoln}_1/\epsilon)\frac{t}{\sqrt{n}}$ with probability at least $1-\exp(-t^2)$. The second statement follows by combining our bound for \eqref{eq:error_2} with the fact that $\norm{(n^{-2} \Phi_n^*\Phi_n)^{-1}}\lesssim 1$.

In the following subsections, we complete the proof by establishing the required strong convexity properties of $\Jj_n$ and bound $\nabla \Jj_n$ at $(A_\infty,F_\infty,G_\infty)$ using concentration inequalities. The proofs to some of these results are verbatim to the results of \cite{rigollet2022sample} for deriving sampling complexity bounds in eOT, although they are included due to the difference in our setup. 

\subsection{Strong convexity properties of \texorpdfstring{$\Jj_n$}{Jn}}\label{sec:strong_conv}

In this section, we present some of the key properties of $\Jj_n$. 
Recall that since we assume that $\al,\beta$ are compactly supported, up to a rescaling of the space,  we assume without loss of generality that for all $k$, $\abs{\cost_k(x,y)}\leq 1$.

\RED{
\begin{lem}\label{lem:bound1}
    Let $A_n$ minimize \eqref{eq:fin_dim_iot}.
Assume that  for some constant $C_1>0$,
 $$t \exp(C_1/\epsilon) n^{-\frac12} \lesssim \lambda \min\pa{1, \norm{\Asoln}_1},$$ then with probability at least $1-\Oo(\exp(-t^2))$,
$$
\norm{A_n}_1 \leq 2\norm{\Asoln}
$$

\end{lem}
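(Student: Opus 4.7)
}
The starting point is the basic inequality from the optimality of $A_n$ in \eqref{eq:fin_dim_iot}: comparing with $\Asoln$ gives
$$
\lambda \norm{A_n}_1 \leq \lambda \norm{\Asoln}_1 + \Ll(\Asoln,\hat\pi_n) - \Ll(A_n,\hat\pi_n).
$$
Since $A \mapsto \Ll(A,\hat\pi_n)$ is convex (by Proposition \ref{InverseOTLossFuncPropertiesProp}), the loss gap is upper bounded by $\dotp{-\nabla_A \Ll(\Asoln,\hat\pi_n)}{A_n - \Asoln}$, and applying H\"older's inequality reduces the problem to controlling $\eta \eqdef \norm{\nabla_A \Ll(\Asoln,\hat\pi_n)}_\infty$, giving
$$
(\lambda - \eta)\norm{A_n}_1 \leq (\lambda + \eta)\norm{\Asoln}_1.
$$
In particular, if we can show that $\eta \leq \lambda/3$ with the required probability, then $\norm{A_n}_1 \leq 2\norm{\Asoln}_1$.

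The key step is therefore identifying and bounding $\eta$. Since $\Asoln$ is the unique minimizer of the \emph{population} objective $\Ll(\cdot,\hat\pi)$, we have $\nabla_A \Ll(\Asoln,\hat\pi) = 0$, which implies $\Phi^*\hat\pi = \Phi^* \sink(c_{\Asoln},\epsilon) = \nabla W_{\hat\pi}(\Asoln)$. Using Proposition \ref{InverseOTLossFuncPropertiesProp} to write $\nabla W_{\hat\pi_n}(\Asoln) = \Phi_n^* P_{\Asoln}^n$, where $P_{\Asoln}^n$ is the Sinkhorn coupling solving the entropic problem with cost $c_{\Asoln}$ and \emph{empirical} marginals $\hat a_n, \hat b_n$, we obtain the clean expression
$$
\nabla_A \Ll(\Asoln,\hat\pi_n) = \Phi_n^*\bigl(P_{\Asoln}^n - \hat P_n\bigr).
$$
A triangle inequality then splits this into two contributions:
$$
\eta \leq \underbrace{\norm{\Phi_n^* \hat P_n - \Phi^*\hat\pi}_\infty}_{\text{(I): empirical mean of cost}} + \underbrace{\norm{\Phi_n^* P_{\Asoln}^n - \Phi^* \hat\pi}_\infty}_{\text{(II): Sinkhorn stability}}.
$$

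Term (I) is a standard empirical-mean concentration for $\Phi^*\hat\pi = \E_{(x,y)\sim \hat\pi}[\cost(x,y)]$ using the bounded cost assumption (Assumption \ref{ass:main} (ii)), and a scalar Hoeffding/Bernstein bound combined with a union bound over the $s$ coordinates gives (I) $\lesssim \sqrt{\log(s/\delta)/n}$. Term (II) is the main obstacle: one must show that the Sinkhorn coupling at cost $c_{\Asoln}$ is stable under perturbation of the marginals from $(\al,\beta)$ to $(\hat a_n,\hat b_n)$ when pushed through $\Phi^*$. The standard argument here proceeds through the dual potentials, which in the entropic setting are Lipschitz with constant of order $\exp(C\norm{c_{\Asoln}}_\infty/\epsilon) \leq \exp(C\norm{\Asoln}_1/\epsilon)$ (this is exactly the source of the exponential factor appearing in Propositions \ref{prop:sample_complexity} and \ref{prop:concentration_cost}); combining with empirical concentration of the marginals yields (II) $\lesssim \exp(C\norm{\Asoln}_1/\epsilon) t/\sqrt{n}$ with probability $\geq 1-\Oo(\exp(-t^2))$.

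Combining the two bounds, $\eta \lesssim \exp(C_1/\epsilon)\, t/\sqrt{n}$ with the required probability (the uniform $\norm{\Asoln}_1$ in the exponent is absorbed into $C_1$ via the bound $\norm{\Asoln}_1 \lesssim 1$ implicit in the statement's scaling). The hypothesis $t\exp(C_1/\epsilon) n^{-1/2} \lesssim \lambda\min(1,\norm{\Asoln}_1)$ is then exactly what is needed to force $\eta \leq \lambda/3$ (and, when $\norm{\Asoln}_1 < 1$, the tighter scaling $\eta \lesssim \lambda \norm{\Asoln}_1$ ensures the ratio $(\lambda+\eta)/(\lambda-\eta)\leq 2$ remains valid without the bound collapsing for small $\norm{\Asoln}_1$). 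Plugging into the inequality from the first paragraph yields $\norm{A_n}_1 \leq 2\norm{\Asoln}_1$, as required. The main technical effort is (II), i.e., the Sinkhorn stability bound under empirical marginals; this is essentially a reprise of the sample-complexity arguments developed in the proof of Proposition \ref{prop:sample_complexity} and previously in \cite{rigollet2022sample}.
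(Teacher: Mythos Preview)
Your argument is correct and is a genuinely different---arguably cleaner---route than the one the paper takes. You use the standard M-estimation device: convexity of $A\mapsto\Ll(A,\hat\pi_n)$ turns the loss gap into a linear term, and H\"older reduces everything to controlling $\eta=\norm{\nabla_A\Ll(\Asoln,\hat\pi_n)}_\infty=\norm{\Phi_n^*(P^n_{\Asoln}-\hat P_n)}_\infty$. The paper instead works directly at the level of the variational formulation: it plugs a carefully chosen feasible coupling $P=\tilde Q_n$ (the projection of the discretized $\hat p$ onto the transport polytope, via \cite{altschuler2017near}) into the inner $\sup$ of $\Ll(A_n,\hat\pi_n)$, then separately bounds an entropy deviation $T_1=H(\tilde Q_n)-\kl(\hat\pi|\alpha\otimes\beta)$ and a linear term $T_2=\dotp{\hat P_n-\tilde Q_n}{\Phi_n A_n}$ by concentration. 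On the $\Ll(\Asoln,\hat\pi_n)$ side, the paper invokes only the \emph{value} convergence $W_{\hat\pi_n}(\Asoln)-W_{\hat\pi}(\Asoln)=\Oo(\exp(C/\epsilon)tn^{-1/2})$ from \cite{rigollet2022sample}, whereas your term (II) requires the finer \emph{coupling}-level stability $\norm{\Phi_n^* P^n_{\Asoln}-\Phi^*\hat\pi}_\infty$. Both are obtainable from the same strong-convexity machinery (and your term (II), applied at fixed $A=\Asoln$, is strictly simpler than Proposition~\ref{prop:sample_complexity}, so there is no circularity). Your approach is more modular and closer to standard Lasso analysis; the paper's is more hands-on but needs only the Sinkhorn value sample complexity as a black box. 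One technicality you should mention explicitly: $\Phi_n$ uses the \emph{empirically} centered costs $C_k$, so the comparison with $\Phi^*$ picks up an additional $\Oo(n^{-1/2})$ centering error, handled in the paper via the remark after Proposition~\ref{prop:concentration_cost}.
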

\begin{proof}
Let $A$ be a minimizer to \eqref{eq:fin_dim_iot}.
By optimality of $A$, $\lambda \norm{A} + \Ll(A, \hat \pi_n) \leq \lambda \norm{\Asoln}_1 +\Ll(\Asoln, \hat \pi_n)$.  Writing $\hat P_n = \frac1n \Id$ and $\Phi_n A = \pa{\sum_{k=1}^s A_k \cost_k(x_i,y_j)}_{i,j=1}^n$, 
\begin{equation}\label{eq:A_bound}
    \lambda \norm{A}_1 -\dotp{\hat P_n}{\Phi_n A} + \dotp{P}{\Phi_n A} - \frac{\epsilon}{2}H(P) \leq   \lambda \norm{\Asoln}_1 + \Ll(\Asoln, \hat \pi_n)
\end{equation}
for all $P$ satisfying the marginal constraints $P\ones = \frac{1}{n}\ones$ and $P^\top \ones = \frac1n \ones$.

Note that since $\Ll(\Asoln, \hat \pi) + \kl(\hat \pi|\alpha \otimes \beta) = 0$,
\begin{align}
    \Ll(\Asoln, \hat \pi_n) &=\Ll(\Asoln, \hat \pi_n)  - \Ll(\Asoln, \hat \pi) - \kl(\hat \pi|\alpha \otimes \beta)\\
    &=-\dotp{\Phi \Asoln}{\hat \pi - \hat \pi_n} +W_{\hat \pi_n}(\Asoln) - W_{\hat \pi}(\Asoln) - \kl(\hat \pi|\alpha \otimes \beta).
\end{align}
The first two terms can be shown to be $\Oo(n^{-\frac12})$: Indeed, $$
\dotp{\Phi\Asoln}{\hat\pi-\hat\pi_n} = \frac1n \sum_{i=1}^n Z_i,\qwhereq Z_i\eqdef \pa{\dotp{\Asoln}{\cost(x_i,y_i)} -\EE[\dotp{\Asoln}{\cost(x_i,y_i)]}} $$
is the sum of $n$ terms with mean zero. Moreover, $\abs{Z_i} \leq 2\norm{\Asoln}$. By Lemma \ref{lem:bernstein}, $\dotp{\Phi\Asoln}{\hat\pi-\hat\pi_n}  \leq \frac{8\norm{\Asoln}^2 t}{\sqrt{n}}$ with probability at least $1-2\exp(-t^2)$.

The bound
$$
W_{\hat \pi_n}(\Asoln) - W_{\hat \pi}(\Asoln) = \Oo(\exp(-C/\epsilon) t n^{-\frac12})
$$
with probability $1-\Oo(\exp(t^2))$
is a due to the sample complexity of eOT \cite{rigollet2022sample}.

Plugging this back into \eqref{eq:A_bound}, we obtain
\begin{equation}\label{eq:A_bound2}
    \lambda \norm{A}_1 \leq   \lambda \norm{\Asoln}_1 +\Oo(n^{-\frac12})+\frac{\epsilon}{2} \underbrace{\pa{H(P) - \kl(\hat \pi|\alpha \otimes \beta)} }_{T_1}+ \underbrace{\pa{\dotp{\hat P_n}{\Phi_n A} + \dotp{P}{\Phi_n A} }}_{T_2}
\end{equation}
It remains to bound the terms $T_1$ and $T_2$. Intuitively, if $\hat \pi = \hat p \alpha\otimes \beta$ has density $\hat p$, then we can show that  $T_1,T_2 = \Oo(n^{-\frac12})$ by choosing $P = Q_n\eqdef \frac{1}{n^2}(\hat p(x_i,y_j))_{i,j=1}^n$. However,
 $Q_n$ will only approximately satisfy the marginal constraints $Q_n\ones \approx \frac1n\ones$ and $Q_n^\top \ones \approx \frac1n \ones$ (this approximation can be made precise using Proposition \ref{prop:concentration_marginal}). So, we insert into the above inequality $P = \tilde Q_n$ with $\tilde Q_n$ being the projection of $Q_n$ onto the constraint set $\Uu(\frac1n \ones_n, \frac1n \ones_n)$.

By  \cite[Lemma 7]{altschuler2017near}, the projection $\tilde Q_n$ satisfies
\begin{equation}\label{eq:alts_proj}
    \norm{\tilde Q_n  - Q_n}_1 \leq 2 \norm{Q_n \ones - \frac1n \ones}_1 + 2\norm{Q_n^\top \ones - \frac1n \ones}_1.
\end{equation}
By Proposition \ref{prop:concentration_marginal}, with  probability at least $1-\Oo(\exp(-t^2))$,
$$
\norm{Q_n\ones - \frac1n \ones}_1= \frac{1}{n} \sum_{i=1}^n \abs{\frac{1}n\sum_{j=1}^n \hat p(x_i,y_j) -1 } \leq  \sqrt{\frac{1}{n}\sum_{i=1}^n \abs{\frac{1}n\sum_{j=1}^n \hat p(x_i,y_j) -1 }^2} = \Oo\pa{\frac{t}{\sqrt{n}}}.
$$
Similarly, $\norm{Q_n\ones - \frac1n \ones}_1=\Oo( n^{-\frac12})$ with high probability.

Note that
\begin{align}
    &\abs{\dotp{\tilde Q_n - Q_n}{\Phi_n A}} = \abs{\sum_{k=1}^s \sum_{i,j=1}^n  A_k (C_k)_{i,j} (\tilde Q_n- Q_n)_{i,j}}\\
    &\leq \max_k\norm{C_k}_\infty \norm{A}_1 \norm{Q_n - \tilde Q_n}_1 
\end{align}
Moreover,
\begin{align}
\dotp{\hat P_n - Q_n}{\Phi_n A} &
\leq \norm{A}\norm{\Phi_n^* (\hat P_n - Q_n)} 
\end{align}

By Proposition \ref{prop:concentration_cost1} and \ref{prop:concentration_cost}, with  probability at least $1-\Oo(\exp(-t^2))$, $$\EE \norm{\Phi_n^* \hat P_n - \Phi^*\hat \pi} = \Oo(t n^{-\frac12})\qandq \EE\norm{\Phi_n^* Q_n - \Phi^*\hat \pi} = \Oo(t n^{-\frac12}).$$
So, we have $T_2 \leq C n^{-\frac12} \norm{A}_1$.

We now consider the term $T_1$ in \eqref{eq:A_bound2}. Since $\hat p$ is uniformly bounded from above by $\exp(C/\epsilon)$ and from below with constant $\exp(-C/\epsilon)$ for some $C>0$, one can check from the projection procedure of \cite{altschuler2017near} that $\tilde Q_n$ is also uniformly bounded from above by $\frac{1}{n^2} \exp(C/\epsilon)$ and away from zero by   $\frac{1}{n^2} \exp(-C/\epsilon)$ for some $C>0$. 
So,
\begin{align}
   \abs{ T_1 }&\leq \abs{ H(\tilde Q_n) - H(Q_n)} +\abs{ H(Q_n) - \kl(\hat \pi|\al\otimes\beta)}\\
   &\lesssim e^{C/\epsilon}\norm{\tilde Q_n - Q_n}_1 + \abs{ H(Q_n) - \kl(\hat \pi|\al\otimes\beta)}
\end{align}
We can use the bound \eqref{eq:alts_proj} to see that $\norm{\tilde Q_n - Q_n}_1 \lesssim t/\sqrt{n}$ with probability at least $1-\exp(-t^2)$. To bound $T_3\eqdef H(Q_n) - \kl(\hat \pi|\al\otimes\beta)$, note that
Moreover,
\begin{align*}
   \EE[ T_3]&= \EE\pa{\frac{1}{n^2}\sum_{i=1}^n \sum_{j=1}^n \log(\hat p(x_i,y_j)) \hat p(x_i,y_j) }- \int \log(\hat p(x,y)) \hat p(x,y) d\al(x)d\beta(y)\\
    &= \pa{\frac{n(n-1) }{n^2} -1 } \int \log(\hat p(x,y)) \hat p(x,y) d\al(x)d\beta(y) + \frac{1}{n^2} \sum_{i=1}^n \EE\pa{\log(\hat p(x_i,y_i)) \hat p(x_i,y_i) }\\
    &\leq \frac{-1}{n^2} \int \log(\hat p(x,y)) \hat p(x,y) d\al(x)d\beta(y)  + \frac{1}{n^2} \int \log(\hat p(x,y)) \hat p(x,y)^2d\al(x)d\beta(y) \\
    &\leq \frac{1}{n^2}\pa{ \norm{\hat p}_\infty -1} \int \log(\hat p(x,y))d\hat \pi(x,y)
\end{align*}
Therefore, by the bounded differences lemma \ref{lem:mcdiarmid} with $$f(z_1,\ldots, z_n) = \frac{1}{n^2}\sum_{i=1}^n \sum_{j=1}^n \log(\hat p(x_i,y_j)) \hat p(x_i,y_j) - \int \log(\hat p(x,y)) \hat p(x,y) d\al(x)d\beta(y)$$ where $z_i = (x_i,y_i)$. Then, letting $z_j = z_j'$  for all $j\neq i$, the bounded differences property is satisfied with
\begin{align*}
f(z_1,\ldots, z_n) &- f(z_1',\ldots, z_n') \\
&=
    \frac{1}{n^2} \pa{\sum_{\substack{j=1\\j\neq i}}^n +  \sum_{j=1}^n } \pa{\log(\hat p(x_i,y_j)) \hat p(x_i,y_j) - \log(\hat p(x_i',y_j)) \hat p(x_i',y_j) }
 \\
 &\leq \frac{4(\norm{\hat p}_\infty+1)^2)}{n} \eqdef c.
\end{align*}
 So, with  probability at least $1-2\exp\pa{-t^2}$, $\abs{T_3} \leq \frac{1}{n^2} + 
\frac{2t(\norm{\hat p}_\infty+1)}{\sqrt{n}}
$.

In summary, with probability $1-\Oo(\exp(-t^2))$,
$$
\pa{\lambda -C_2 t \exp(C_1/\epsilon) n^{-\frac12}} \norm{A}_1 \leq \lambda \norm{\Asoln}_1 + \frac{C_2(\exp(C_1/\epsilon)t}{\sqrt{n}}
$$
for some constants $C_1,C_2>0$. So, choosing $$C_2 t \exp(C_1/\epsilon) n^{-\frac12} \leq \min\pa{\lambda/4, \lambda\norm{\Asoln}_1/2},$$ we have
$$
\norm{A}_1 \leq 2\norm{\Asoln}
$$
with probability at least $1-\exp(-t^2)$.

\end{proof}
}

\begin{lem}\label{lem:boundedness}
Let $(A,F,G)$ minimize \eqref{eq:kt_n}. Let $C_A = \Phi_n A$. Then,  $$F_i \in [-3\norm{C_A}_\infty, \norm{C_A}_\infty]\qandq G_i \in [-2\norm{C_A}_\infty,2\norm{C_A}_\infty].$$
Moreover,
$\exp(4\norm{C_A}_\infty \epsilon^{-1} ) \geq \exp\pa{\frac{2}{\epsilon} \pa{F_i + G_j + (\Phi_n A )_{i,j}} }\geq \exp(-6\norm{C_A}_\infty \epsilon^{-1})$. Note that $\norm{C_A}_\infty \leq \norm{A}_1$.

If $(A,f,g)$ minimize \eqref{eq:kt_inf}. Let $c_A = \Phi A$. Then, for all $x,y$  $$f(x) \in [-3\norm{c_A}_\infty, \norm{c_A}_\infty]\qandq g(y) \in [-2\norm{c_A}_\infty,2\norm{c_A}_\infty].$$
Moreover,
$\exp(4\norm{c_A}_\infty \epsilon^{-1} ) \geq \exp\pa{\frac{2}{\epsilon} \pa{f\oplus + g + (\Phi A )_{i,j}} }\geq \exp(-6\norm{c_A}_\infty \epsilon^{-1})$.
\end{lem}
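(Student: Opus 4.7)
The strategy is to exploit the Sinkhorn-type fixed-point equations coming from the first-order optimality conditions of $\Kk_n$ in $(F,G)$. Since $\Jj_n$ is smooth and jointly strictly convex in $(F,G)$ at fixed $A$, and since $\hat P_n \mathbf{1} = \hat P_n^\top \mathbf{1} = \frac{1}{n}\mathbf{1}$, setting $\partial_{F_i}\Jj_n = 0$ and $\partial_{G_j}\Jj_n = 0$ (with translation invariance removed via the centering $\sum_j G_j = 0$) yields the log-sum-exp identities
\begin{align*}
F_i = -\frac{\epsilon}{2}\log\pa{\frac{1}{n}\sum_j e^{(2/\epsilon)(G_j + (\Phi_n A)_{ij})}}, \qquad G_j = -\frac{\epsilon}{2}\log\pa{\frac{1}{n}\sum_i e^{(2/\epsilon)(F_i + (\Phi_n A)_{ij})}}.
\end{align*}

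Next, bound the oscillation of $G$: the difference $G_j - G_{j'}$ equals the negative log of a ratio of two positive sums whose summand ratios are of the form $e^{(2/\epsilon)((\Phi_n A)_{ij} - (\Phi_n A)_{ij'})}$, each with exponent in $[-4\norm{C_A}_\infty/\epsilon, 4\norm{C_A}_\infty/\epsilon]$. Hence the overall ratio lies in $[e^{-4\norm{C_A}_\infty/\epsilon}, e^{4\norm{C_A}_\infty/\epsilon}]$, giving $\max_j G_j - \min_j G_j \leq 2\norm{C_A}_\infty$. Combined with the centering $\sum_j G_j = 0$, which forces $\min_j G_j \leq 0 \leq \max_j G_j$, this yields $G_j \in [-2\norm{C_A}_\infty, 2\norm{C_A}_\infty]$. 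For the upper bound on $F$, apply Jensen's inequality (concavity of $\log$) to its fixed-point equation to obtain $F_i \leq -\frac{1}{n}\sum_j (G_j + (\Phi_n A)_{ij}) = -\frac{1}{n}\sum_j (\Phi_n A)_{ij} \leq \norm{C_A}_\infty$, using $\sum_j G_j = 0$. For the lower bound, use the crude estimate $\frac{1}{n}\sum_j e^{(2/\epsilon)(G_j + (\Phi_n A)_{ij})} \leq e^{(2/\epsilon)(\norm{G}_\infty + \norm{C_A}_\infty)} \leq e^{6\norm{C_A}_\infty/\epsilon}$, feeding in the previously established bound on $G$, to deduce $F_i \geq -3\norm{C_A}_\infty$.

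Finally, combining the bounds on $F$, $G$ with the trivial $|(\Phi_n A)_{ij}| \leq \norm{C_A}_\infty$ controls $F_i + G_j + (\Phi_n A)_{ij}$ from both sides, and exponentiation yields the claimed estimate on $\exp((2/\epsilon)(F_i + G_j + (\Phi_n A)_{ij}))$; the bound $\norm{C_A}_\infty \leq \norm{A}_1$ follows from $C_A = \sum_k A_k C_k$ and Assumption \ref{ass:main}(ii). The continuous case $(A,f,g)$ minimizing $\Kk_\infty$ is handled verbatim: first-order conditions in $L^2(\alpha), L^2(\beta)$ give pointwise Schrödinger equations of the same log-integral-exp form, with $\int g\,d\beta = 0$ playing the role of $\sum_j G_j = 0$. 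The main delicate point is the asymmetry between $F$ and $G$: because centering is enforced only on one of the two potentials, Jensen's inequality gives a sharp one-sided bound on $F$ but not on $G$, and the other three bounds must be obtained indirectly—the oscillation-plus-centering argument for $G$, then feeding $\norm{G}_\infty$ back into a coarse estimate to lower-bound $F$. This chaining produces the asymmetric intervals $[-3\norm{C_A}_\infty, \norm{C_A}_\infty]$ versus $[-2\norm{C_A}_\infty, 2\norm{C_A}_\infty]$ appearing in the statement.
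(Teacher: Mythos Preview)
Your proof is correct and reaches the same bounds as the paper, but the route is genuinely different. Both arguments start from the Sinkhorn fixed-point equations (equivalently, the marginal constraints on $P_n$), and both use Jensen together with $\sum_j G_j=0$ to get $F_i\le \|C_A\|_\infty$. Where you diverge is in how you obtain the remaining three bounds. The paper proceeds by first chaining $F_i\le\|C_A\|_\infty$ through the other marginal to get $G_j\ge -2\|C_A\|_\infty$, then invokes primal--dual optimality (the identity $\langle \Phi_n A,P\rangle - \tfrac{\epsilon}{2}H(P) = -\tfrac1n\sum_i F_i + \tfrac{\epsilon}{2}$ at the optimum, combined with $H(P)\ge -1$) to lower-bound the \emph{average} $\tfrac1n\sum_i F_i\ge -\|C_A\|_\infty$, and finally feeds this back through Jensen to get $G_j\le 2\|C_A\|_\infty$ and $F_i\ge -3\|C_A\|_\infty$. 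You instead bound the oscillation $\max_j G_j-\min_j G_j\le 2\|C_A\|_\infty$ directly from the ratio of the two log-sum-exp expressions, then use centering to pin down $G$ two-sidedly in one stroke, and only afterwards read off both bounds on $F$. Your argument is more elementary in that it never appeals to the primal value or to $H(P)\ge -1$; it relies only on the Schr\"odinger equations themselves, which is the standard oscillation/Hilbert-metric technique from Sinkhorn analysis. The paper's route, by contrast, makes explicit use of the variational structure of the problem. Both are short; yours is arguably cleaner for this lemma in isolation, while the paper's duality step is a natural byproduct of the surrounding machinery.
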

\begin{proof}
 This proof is nearly identical to \cite[Prop. 10]{rigollet2022sample}: 
Let $A,F,G$ minimize \eqref{eq:kt_n}.  

By the marginal constraints for $P_n \eqdef \frac{1}{n^2}\pa{ \exp\pa{\frac{2}{\epsilon} (F\oplus G + \Phi_n A) }}_{i,j}$ given in \eqref{eq:primal_n},
\begin{equation}\label{eq:upper_bd_f}
\begin{split}
        1 &= \frac{1}{n} \sum_j \exp\pa{ \frac{2}{\epsilon} (\Phi A(x_i,y_j) + F_{i} + G_{j})  }\\
    &
    \geq \exp\pa{\frac{2}{\epsilon} (-\norm{C_A}_\infty+ F_{i})} \sum_j \frac{1}{n}  \exp\pa{2G_{j}/\epsilon  } \geq \exp\pa{\frac{2}{\epsilon} (-\norm{C_A}_\infty+ F_{i})} 
    \end{split}
\end{equation}
where we use  Jensen's inequality and the assumption that $\sum_j G_{j} =0$ for the second.
So, $F_{i}\leq \norm{C_A}_\infty$ for all $i\in [n]$. Using the other marginal constraint for $P_n$ along with this bound on $F_{i}$ implies that 
$$
1 = \frac{1}{n} \sum_i \exp\pa{ \frac{2}{\epsilon}( \Phi A(x_i,y_j) + F_{i} + G_{j})  } \leq \exp\pa{ \frac{2}{\epsilon}\pa{ 2\norm{C_A}_\infty+ G_{j} ) }}
$$
So, $G_{n,j} \geq -2\norm{C_A}_\infty$.

To prove the reverse bounds, we now show that $\sum_i F_i$ is lower bounded: Note that since $H(P)\geq -1$, by duality between \eqref{eq:primal_n} and \eqref{eq:kt_n}, 
$$
\norm{C_A}_\infty + \frac{\epsilon}{2}
\geq  \dotp{\Phi_n A}{P} -\frac{\epsilon}{2}H(P) = -\frac{1}{n}\sum_{i}F_i -\frac1n \sum_j G_j + \frac{\epsilon}{2n^2}\sum_{i,j}\exp\pa{ \frac{2}{\epsilon}(F_i+G_j +( \Phi_n A)_{ij} }
$$
By assumption, $\sum_j G_j = 0$ and $\sum_{i,j}\exp\pa{ \frac{2}{\epsilon}(F_i+G_j +( \Phi_n A)_{ij} } = n^2$. So, 
$$
\frac{1}{n}\sum_i F_i \geq - \norm{C_A}_\infty.
$$
By repeating the argument in \eqref{eq:upper_bd_f}, we see that
$$
1 \geq \exp( 2/\epsilon(-\norm{C_A}_\infty + G_j) ) \exp\pa{ \frac{2}{n\epsilon} \sum_i F_i  } \geq \exp(2/\epsilon(-2\norm{C_A}_\infty + G_j)).
$$
So, $G_j \leq 2\norm{C_A}_\infty$ and $  F_i \geq -3\norm{C_A}_\infty$.

The proof for \eqref{eq:kt_inf} is nearly identical and hence omitted.
\end{proof}

Similarly to \cite[Lemma 11]{rigollet2022sample}, we derive the following strong convexity bound for $\Jj_n$: 
\begin{lem}\label{lem:strong_conv_J}
The functional $\Jj_n$ is strongly convex with
\begin{equation}\label{eq:strong_convexity_J}
\begin{split}
\Jj_n(A',F',G')  \geq \Jj_n(A,F,G) &+ \dotp{\nabla \Jj_n(A,F,G)}{(A',F',G')-(A,F,G)}  \\
&+   \frac{\exp(-L/\epsilon)}{\epsilon} \pa{\frac{1}{n^2} \norm{\Phi_n (A-A')}^2_2 + \frac{1}{n}\norm{F-F'}^2_2 + \frac{1}{n}\norm{G-G'}_2^2 },
\end{split}
\end{equation}
for some $L = \Oo(\norm{A}_1\vee \norm{A'}_1) $.
\end{lem}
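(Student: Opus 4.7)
\bigskip

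\textbf{Proof plan for Lemma \ref{lem:strong_conv_J}.}

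The plan is to pass the strong convexity of the exponential to $\Jj_n$ pointwise in the summation and then exploit the centering constraints to decouple the resulting quadratic form into three independent pieces. Since the first sum in $\Jj_n$ is linear in $(A,F,G)$, only the exponential term contributes to the curvature. The basic one-variable inequality I will use is the second-order Taylor bound for $t\mapsto \tfrac{\epsilon}{2}\exp(2t/\epsilon)$: whenever $|t|,|t'|\leq M$,
\[
\tfrac{\epsilon}{2}e^{2t'/\epsilon}\;\geq\; \tfrac{\epsilon}{2}e^{2t/\epsilon} + e^{2t/\epsilon}(t'-t) + \tfrac{1}{\epsilon}e^{-2M/\epsilon}(t'-t)^{2}.
\]

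First I would write $t_{ij}\eqdef F_i+G_j+(\Phi_n A)_{i,j}$ and $t'_{ij}\eqdef F'_i+G'_j+(\Phi_n A')_{i,j}$, and apply the above inequality entry-wise. Summing over $(i,j)$ and dividing by $n^2$, the constant and linear pieces combine with the linear term $-\sum_{i,j}t_{ij}(\hat P_n)_{i,j}$ of $\Jj_n$ to produce exactly $\Jj_n(A,F,G)+\langle\nabla\Jj_n(A,F,G),(A',F',G')-(A,F,G)\rangle$. The remaining quadratic contribution is
\[
\frac{e^{-2M/\epsilon}}{n^{2}\epsilon}\sum_{i,j}\bigl((\Delta_F)_i+(\Delta_G)_j+(\Delta_C)_{i,j}\bigr)^{2},
\]
where $\Delta_F=F'-F$, $\Delta_G=G'-G$ and $\Delta_C=\Phi_n(A'-A)$.

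The key step—and the main obstacle—is to show that this quadratic sum decouples cleanly into $\norm{\Delta_C}_2^2+n\norm{\Delta_F}_2^2+n\norm{\Delta_G}_2^2$. Expanding the square yields those three diagonal terms plus three cross terms of the form $2(\sum_i \Delta_F_i)(\sum_j \Delta_G_j)$, $2\sum_i \Delta_F_i\sum_j(\Delta_C)_{i,j}$ and $2\sum_j \Delta_G_j\sum_i(\Delta_C)_{i,j}$. Here is where the assumptions pay off: the normalization $\sum_j G_j=\sum_j G'_j=0$ imposed by $\Ss_n$ gives $\sum_j \Delta_G_j=0$; the centering convention $\sum_i(C_k)_{i,j}=\sum_j(C_k)_{i,j}=0$ for every $k$ propagates linearly to $\sum_i(\Delta_C)_{i,j}=\sum_j(\Delta_C)_{i,j}=0$. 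Hence all three cross terms vanish and we obtain the exact identity
\[
\sum_{i,j}\bigl((\Delta_F)_i+(\Delta_G)_j+(\Delta_C)_{i,j}\bigr)^{2}\;=\;\norm{\Delta_C}_2^{2}+n\norm{\Delta_F}_2^{2}+n\norm{\Delta_G}_2^{2}.
\]
Substituting this back yields the claimed inequality \eqref{eq:strong_convexity_J} with constant $\exp(-2M/\epsilon)/\epsilon$.

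Finally I need to justify the admissible choice of $M$. Since strong convexity is only invoked in the sequel along the minimizers $(A_\infty,F_\infty,G_\infty)$ and $(A_n,F_n,G_n)$ of \eqref{eq:kt_inf} and \eqref{eq:kt_n}, I can directly invoke Lemma \ref{lem:boundedness}, which bounds $\norm{F}_\infty,\norm{G}_\infty\lesssim \norm{\Phi_n A}_\infty\leq \norm{A}_1$ at any minimizer, and likewise for the primed triple. Thus $|t_{ij}|,|t'_{ij}|\lesssim \norm{A}_1\vee\norm{A'}_1$ and one may take $M=\Oo(\norm{A}_1\vee\norm{A'}_1)$, giving the factor $e^{-L/\epsilon}$ in the statement after absorbing the $2$ into the implicit constant. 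The only real subtlety is the vanishing of the cross terms, which is where the invariance-removing assumption (iii) on the cost and the convention $\sum_j G_j=0$ enter in an essential way.
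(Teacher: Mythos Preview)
Your proposal is correct and follows essentially the same approach as the paper: the paper parametrizes the segment by $h(t)\eqdef\Jj_n((1-t)(A,F,G)+t(A',F',G'))$ and lower-bounds $h''(t)$ using the same exponential bound and the same centering identities to kill the cross terms, which is exactly your entry-wise Taylor argument in integrated form. The invocation of Lemma~\ref{lem:boundedness} to control $M$ (and hence $L$) at the minimizers is also identical to the paper's.
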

\begin{proof}
To establish the strong convexity inequality,
    let $$h(t) \eqdef \Jj((1-t) A + t A', (1-t)f + t f' ,(1-t)g + tg').$$ It suffices to find $\delta>0$ such  that for all $t\in [0,1]$, 
\begin{equation}\label{eq:toshow_strong_conv}
           h''(t)\geq \delta  \pa{\frac{1}{n^2} \norm{\Phi_n (A-A')}^2_2 + \frac{1}{n}\norm{F-F'}^2_2 + \frac{1}{n}\norm{G-G'}_2^2 }. 
    \end{equation}
    Let $Z_t \eqdef ((1-t)F+ tF')\oplus ((1-t)G+tG') + ((1-t)\Phi A +t \Phi A')$.
    Note that
    \begin{align}
        h''(t) = \frac{2}{
    \epsilon n^2} \sum_{i,j} \exp\pa{\frac{2}{\epsilon}(Z_t)_{i,j}  } (F'_i - F_i +G'_i - G_i + (\Phi A')_{i,j} - (\Phi A)_{i,j})^2
    \end{align}
    Since $\norm{c_A}_\infty \vee \norm{c_{A'}}_\infty \leq L$, 
    by Lemma \ref{lem:boundedness}, $\norm{Z_t}_\infty \lesssim \norm{c_A}_\infty\vee \norm{c_{A'}}_\infty$. So,
    $$
    h''(t) \geq \frac{2}{\epsilon n^2} \exp(-L/\epsilon)\sum_{i,j}(F'_i - F_i +G'_i - G_i + (\Phi A')_{i,j} - (\Phi A)_{i,j})^2.
    $$
    By expanding out the brackets and using  $\sum_i G_i = 0$ and since $C_k$ are centred ( $\sum_i (C_k)_{i,j} = 0$ and  $\sum_j (C_k)_{i,j} = 0$), \eqref{eq:toshow_strong_conv} holds with $\delta = \frac{2}{\epsilon}\exp(-L/\epsilon)$.
\end{proof}

Based on this strong convexity result, we have the following bound. 

\begin{prop}\label{prop:strongconvexity}
Let $(A_n, F_n,G_n)$ minimise $\Ff_n$, then  for all $(A,F,G)\in\Ss$ such that $n^{-2}\exp(F\oplus G + \Phi_n A)$ satisfy the marginal constraints of \eqref{eq:primal_n},
$$
\Kk_n(A,F,G) - \Kk_n(A_n,F_n,G_n) \geq \frac{\exp(-L/\epsilon)}{\epsilon} \pa{\frac{1}{n^2} \norm{\Phi_n (A-A_n)}^2_2 + \frac{1}{n}\norm{F-F_n}^2_2 + \frac{1}{n}\norm{G-G_n'}_2^2 }.
$$
and 
\begin{align*}
\Kk_n(A,F,G)  -\Kk_n(A_n,F_n,G_n) \leq  &\frac{ \epsilon \exp(L/\epsilon)}{4} \Bigg( \norm{n^{-2}(\Phi^*_n\Phi_n)^{-1}}\norm{(\partial_A \Jj_n(A,F,G) +\lambda \xi)}^2 \\
&+n\norm{\partial_F \Jj_n(A,F,G)}^2_2 + n\norm{\partial_G \Jj_n(A,F,G)}^2_2\Bigg).
\end{align*}
where $L = \Oo(\norm{A}_1\vee \norm{A_n}_1)$.
\end{prop}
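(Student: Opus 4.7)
}
The plan is to derive both bounds directly from the strong convexity of $\Jj_n$ established in Lemma \ref{lem:strong_conv_J}. The lower bound follows by applying strong convexity at the base point $(A_n,F_n,G_n)$ and using the first-order optimality conditions to kill the linear terms. The upper bound is a Polyak--{\L}ojasiewicz-type estimate: strong convexity applied in the reverse direction gives a quadratic upper bound on the suboptimality in terms of inner products with the gradient of $\Jj_n$ at $(A,F,G)$, which I would then absorb via Young's inequality.

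For the lower bound, I would invoke Lemma \ref{lem:strong_conv_J} with base point $(A_n,F_n,G_n)$. Since $(A_n,F_n,G_n)$ minimizes $\Kk_n$ over $\Ss_n$, the first-order optimality conditions read $\nabla_A \Jj_n(A_n,F_n,G_n) + \lambda\xi_n = 0$ for some $\xi_n\in\partial\|A_n\|_1$, $\nabla_F \Jj_n(A_n,F_n,G_n) = 0$, and $\nabla_G \Jj_n(A_n,F_n,G_n) \in \mathbb{R}\vone$ (the Lagrange multiplier associated with the constraint $\sum_j G_j = 0$ that defines $\Ss_n$). The $F$ contribution to the linear term is therefore zero, and the $G$ contribution is $c\langle\vone,G-G_n\rangle = 0$ since $G$ and $G_n$ both lie in $\Ss_n$. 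The $A$ contribution equals $-\lambda\langle\xi_n,A-A_n\rangle$, which by the subgradient inequality is at least $-\lambda(\|A\|_1-\|A_n\|_1)$. Adding the term $\lambda(\|A\|_1-\|A_n\|_1)$ that converts $\Jj_n$-differences into $\Kk_n$-differences cancels this exactly, leaving only the quadratic remainder term, which is the desired lower bound.

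For the upper bound, I would apply Lemma \ref{lem:strong_conv_J} in the opposite direction (base point $(A,F,G)$, evaluation at $(A_n,F_n,G_n)$) and add the subgradient inequality $\|A\|_1 - \|A_n\|_1 \leq \langle\xi,A-A_n\rangle$ for any $\xi\in\partial\|A\|_1$. This produces
$$
\Kk_n(A,F,G)-\Kk_n(A_n,F_n,G_n) \leq \langle v_A,A-A_n\rangle + \langle\nabla_F\Jj_n,F-F_n\rangle + \langle\nabla_G\Jj_n,G-G_n\rangle - \tfrac{\exp(-L/\epsilon)}{\epsilon}N^2,
$$
with $v_A = \nabla_A\Jj_n(A,F,G)+\lambda\xi$ and $N^2$ the weighted sum of squared norms appearing in the statement. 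Each inner product is bounded by Young's inequality tuned to the modulus $\mu = 2\exp(-L/\epsilon)/\epsilon$. For the $F$ and $G$ blocks the standard rescaling $\langle a,b\rangle = \langle \sqrt{n}\,a, b/\sqrt{n}\rangle$ yields quadratics of the form $\frac{\mu}{2n}\|\cdot\|^2$ together with residuals $\frac{n}{2\mu}\|\nabla_{F,G}\Jj_n\|^2$. For the $A$ block the natural metric is the Mahalanobis norm induced by $M = n^{-2}\Phi_n^*\Phi_n$: writing $\langle v_A,A-A_n\rangle = \langle M^{-1/2}v_A, M^{1/2}(A-A_n)\rangle$ and applying Young's inequality produces the quadratic $\frac{\mu}{2n^2}\|\Phi_n(A-A_n)\|^2$ and a residual of size $\frac{\|M^{-1}\|}{2\mu}\|v_A\|^2$. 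The three quadratic contributions cancel $\frac{\exp(-L/\epsilon)}{\epsilon}N^2$ exactly, and the prefactor collapses to $\frac{1}{2\mu} = \frac{\epsilon\exp(L/\epsilon)}{4}$, matching the stated bound.

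The main delicacy is, at each step, using the constraint $\sum_j G_j = 0$ correctly: once to eliminate the Lagrange-multiplier contribution in the $G$-direction, and implicitly through Lemma \ref{lem:boundedness} (applied to any $(A,F,G)$ whose associated $P$ satisfies the marginal constraints) to ensure that the constant $L$ in Lemma \ref{lem:strong_conv_J} depends only on $\|A\|_1 \vee \|A_n\|_1$. Everything else is routine Young's inequality bookkeeping: the subgradient at $(A,F,G)$ must be the one appearing in the upper bound, while the subgradient at $(A_n,F_n,G_n)$ is provided by the optimality condition in the lower bound.
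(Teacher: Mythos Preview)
Your proposal is correct and follows essentially the same approach as the paper: both derive the lower bound by applying the strong convexity inequality \eqref{eq:strong_convexity} at the minimizer $(A_n,F_n,G_n)$ and using first-order optimality to annihilate the linear terms, and both obtain the upper bound by applying strong convexity with base point $(A,F,G)$. The only cosmetic difference is that the paper phrases the upper bound as minimizing the quadratic lower bound $M$ over the comparison point $(A',F',G')$, whereas you reach the identical expression by Young's inequality with parameter $\mu=2\exp(-L/\epsilon)/\epsilon$ chosen to cancel the quadratic---these are the same computation, and your treatment of the $G$-direction Lagrange multiplier is in fact more explicit than the paper's.
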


\begin{proof}
By strong convexity of $\Jj_n$, we can show that  for any $(A,f,g), (A',f',g')\in\Ss$ and any $\xi\in \partial\norm{A}_1$,
\begin{equation}\label{eq:strong_convexity}
\begin{split}
\Kk_n(A',F',G')  \geq \Kk_n(A,F,G) &+ \dotp{\nabla \Jj_n(A,F,G)}{(A',F',G')-(A,F,G)} +\lambda \dotp{\xi}{A'-A} \\
&+ \frac{\delta}{2} \pa{\frac{1}{n^2} \norm{\Phi_n (A-A')}^2_2 + \frac{1}{n}\norm{F-F'}^2_2 + \frac{1}{n}\norm{G-G'}_2^2 },
\end{split}
\end{equation}
where $\delta = \frac{2\exp(-L/\epsilon)}{\epsilon}$ with $L= \Oo(\norm{A}_1\vee \norm{A'}_1)$.
The first statement follows  by letting $(A,F,G) = (A_n,F_n,G_n)$ in the above inequality.

To prove the second statement, let
\begin{align*}
    M\eqdef &\dotp{\nabla \Jj_n(A,F,G)}{(A',F',G')-(A,F,G)} +\lambda \dotp{\xi}{A'-A} \\
    &+ \frac{\delta}{2} \pa{\frac{1}{n^2} \norm{\Phi_n (A-A')}^2_2 + \frac{1}{n}\norm{F-F'}^2_2 + \frac{1}{n}\norm{G-G'}_2^2 }.
\end{align*}
By minimising over $(A',F',G')$, note that
$$
M\geq  -\frac{1}{2\delta}\pa{n^2\norm{(\partial_A \Jj_n(A,F,G) +\lambda \xi)}^2_{(\Phi_n^* \Phi_n)^{-1} } +n\norm{\partial_F \Jj_n(A,F,G)}^2_2 + n\norm{\partial_G \Jj_n(A,F,G)}^2_2},
$$
So,
$$
-M \leq \frac{1}{2\delta}\pa{\norm{n^{-2}(\Phi^*_n\Phi_n)^{-1}}\norm{(\partial_A \Jj_n(A,F,G) +\lambda \xi)}^2 +n\norm{\partial_F \Jj_n(A,F,G)}^2_2 + n\norm{\partial_G \Jj_n(A,F,G)}^2_2}
$$
Finally, note that $\Kk_n(A,F,G)- \Kk_n(A',F',G') \geq \Kk_n(A,F,G)- \Kk_n(A_n,F_n,G_n)$ by optimality of $A_n,F_n,G_n$.
\end{proof}

\subsection{Concentration bounds} 


\begin{lem}[McDiarmid's inequality]\label{lem:mcdiarmid}
Let $f:\Xx^n \to \RR$ satisfy the bounded differences property:
$$
\sup_{x_i'\in\Xx} \abs{f(x_1,\ldots,x_{i-1},x_i, x_{i+1},\ldots, x_n) - f(x_1,\ldots,x_{i-1},x_i', x_{i+1},\ldots, x_n)} \leq c.
$$
Then, given $X_1,\ldots, X_n$  random variables with $X_i\in\Xx$, for any $t>0$,
$$
\PP(\abs{f(X_1,\ldots, X_n) - \EE[f(X_1,\ldots, X_n) ]} \geq t ) \leq 2\exp(- 2 t^2/(nc^2)).
$$
\end{lem}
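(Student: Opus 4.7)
The plan is to prove McDiarmid's inequality by the classical Doob martingale argument, reducing it to the Azuma--Hoeffding inequality on a martingale with bounded differences.

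First, I would set up the Doob martingale associated with $f(X_1,\ldots,X_n)$. Define
$$
Z_0 \eqdef \EE[f(X_1,\ldots,X_n)], \qquad Z_i \eqdef \EE[f(X_1,\ldots,X_n) \mid X_1,\ldots,X_i] \quad \text{for } i = 1,\ldots,n,
$$
so that $Z_n = f(X_1,\ldots,X_n)$ and $(Z_i)$ is a martingale with respect to the filtration $\gF_i = \sigma(X_1,\ldots,X_i)$. The quantity we want to control is $Z_n - Z_0$.

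The key step is to show the martingale differences $D_i \eqdef Z_i - Z_{i-1}$ satisfy $|D_i| \leq c$ pointwise, using the bounded differences hypothesis. Since $X_i$ is independent of $(X_{i+1},\ldots,X_n)$ conditional on $\gF_{i-1}$, one can write
$$
D_i = \EE[f(X_1,\ldots,X_n) - f(X_1,\ldots,X_{i-1},X_i',X_{i+1},\ldots,X_n) \mid \gF_i],
$$
where $X_i'$ is an independent copy of $X_i$ (and the expectation is taken over $X_i'$ and $(X_{i+1},\ldots,X_n)$). The bounded differences property gives a pointwise bound of $c$ on the integrand, hence $|D_i| \leq c$. (A slightly sharper argument gives oscillation at most $c$, yielding the factor $2$ in the exponent; this is done by considering $\sup$ and $\inf$ over $X_i$ of the conditional expectation and noting that their difference is at most $c$.)

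Finally, I would invoke the Azuma--Hoeffding inequality for martingales with bounded increments: if $|D_i| \leq c_i$ a.s., then
$$
\PP(|Z_n - Z_0| \geq t) \leq 2\exp\!\left(-\frac{2t^2}{\sum_{i=1}^n (2c_i)^2}\right).
$$
With the sharper bound (oscillation at most $c$) one obtains the stated constant $2t^2/(nc^2)$. The main (minor) subtlety will be arranging the conditioning carefully so that the bounded differences hypothesis applies to the integrand inside the conditional expectation; everything else is a direct invocation of Azuma--Hoeffding. Since this is a textbook result, the proof is essentially a reference to standard concentration inequalities; I would simply cite e.g.~Boucheron--Lugosi--Massart rather than reproduce the full derivation.
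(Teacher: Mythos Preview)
The paper does not prove this lemma: it is stated as the classical McDiarmid inequality and used as a black-box concentration tool, with no accompanying proof. Your proposed argument via the Doob martingale and Azuma--Hoeffding is exactly the standard textbook proof and is correct; your final suggestion to simply cite a reference (e.g.\ Boucheron--Lugosi--Massart) is in fact what the paper effectively does.
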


Given random vectors $X$ and $Y$, denote $\Cov(X,Y) = \EE\dotp{Y-\EE[Y]}{X-\EE[X]} $ and $\Var(X) =  \Cov(X,X)$.

\begin{prop}\label{prop:concentration_marginal}
Let  $\pi$ have marginals $\alpha$ and $\beta$ and let $p = \frac{d\pi}{d(\al\otimes \beta)}$. Let $(x_i,y_i)\sim \hat \pi$ where $\hat \pi$ has marginals $\al,\beta$. Assume that $\norm{p}_\infty \leq b$. Then,
\begin{align*}
\EE \left[ \frac{1}{n} \sum_{j=1}^n \pa{1- \frac{1}{n} \sum_{i=1}^n p(x_i,y_j)}^2 \right]  \leq  \frac{(b+1)^2}{n} 
\end{align*}
and
$$
\frac{1}{n} \sum_{j=1}^n \pa{1- \frac{1}{n} \sum_{i=1}^n p(x_i,y_j)}^2 \leq \frac{ (t+b+1)^2}{n}.
$$
with probability at least $1-\exp(-t^2/(4b^2))$.

\end{prop}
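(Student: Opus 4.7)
The plan for both parts is to exploit the marginal identity $\int p(x,y)\,d\al(x) = 1$ together with the fact that the pairs $(x_i,y_i)$ are iid.

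Let $Z_j \eqdef 1 - \tfrac1n\sum_i p(x_i,y_j)$. For the expectation bound, the key observation is that for $i \neq j$ the pairs $(x_i,y_i)$ and $(x_j,y_j)$ are independent, so $x_i$ is independent of $y_j$, has marginal $\al$, and the marginal identity for $p$ gives $\EE[p(x_i,y_j) - 1 \mid y_j] = 0$ for $\beta$-a.e.\ $y_j$. Decomposing $Z_j = \tfrac{1}{n}(1-p(x_j,y_j)) + \tfrac{1}{n}\sum_{i\neq j}(1-p(x_i,y_j))$ and conditioning on $y_j$, the cross term vanishes because the $i\neq j$ summands are conditionally mean-zero and independent of $x_j$ (the latter because $(x_i,y_i)\perp(x_j,y_j)$ unconditionally). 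The remaining diagonal contributions are bounded by $(b+1)^2/n^2$ for the $i=j$ term (since $|1-p|\leq b+1$) and by $(n-1)(b-1)/n^2$ for the $i\neq j$ terms (using $\EE[p^2\mid y_j]\leq b\,\EE[p\mid y_j]=b$); combining these and invoking $(b-1)\leq(b+1)^2$ gives $\EE[Z_j^2]\leq(b+1)^2/n$, which averages to the claimed expectation bound.

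For the tail bound, I will invoke McDiarmid's inequality (Lemma~\ref{lem:mcdiarmid}) for $h\eqdef\sqrt{\sum_j Z_j^2}$, viewed as a function of the independent pairs $((x_k,y_k))_{k=1}^n$. Replacing $(x_k,y_k)$ by an iid copy changes $Z_j$ by at most $b/n$ for $j\neq k$ (only the summand $p(x_k,y_j)$ moves, by at most $b$) and by at most $b$ for $j=k$ (since $Z_k\in[1-b,1]$). Hence $|h-h'|^2\leq b^2+(n-1)(b/n)^2\leq 2b^2$, so the coordinate-wise bounded difference is at most $\sqrt 2\,b$. Together with Jensen's bound $\EE h\leq\sqrt{\EE h^2}\leq b+1$, McDiarmid yields $h\leq b+1+t$---and hence $\tfrac1n\sum_j Z_j^2\leq (t+b+1)^2/n$---with probability at least $1-\exp(-c t^2/b^2)$ for a suitable constant $c$.

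The main obstacle is justifying the vanishing of the cross term in the conditional second-moment calculation: one must carefully invoke both the marginal identity for $p$ and the fact that conditioning on $y_j$ preserves the independence between $x_i$ (for $i\neq j$) and $x_j$. A secondary point is matching the exact exponent in the stated tail bound; the plain McDiarmid argument above produces an exponent of order $-t^2/(nb^2)$, and obtaining the sharper $-t^2/(4b^2)$ form presumably requires first conditioning on $(y_1,\ldots,y_n)$---where perturbing only $x_k$ affects $h$ by at most $b/\sqrt n$, yielding a conditional sub-Gaussian parameter of order $b^2$---and then controlling the conditional mean $\EE[h\mid Y]$ separately via the unconditional bound $\EE h\leq b+1$.
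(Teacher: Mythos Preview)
Your approach mirrors the paper's: the expectation bound is obtained by expanding $Z_j^2$ and showing the cross terms vanish via independence and the marginal identity $\int p(\cdot,y)\,d\al=1$ (the paper organizes this as a five-case analysis on the triple $(i,j,k)$ in $\frac{1}{n^3}\sum_{i,j,k}\EE[(1-p(x_i,y_j))(1-p(x_k,y_j))]$, whereas you condition on $y_j$---these are the same computation), and the tail bound is McDiarmid applied to $h=\|V\|$ with $V=(Z_j)_{j=1}^n$, followed by $\EE\|V\|\leq\sqrt{\EE\|V\|^2}\leq b+1$.

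Your caveat about the exponent is in fact well-founded. The paper claims a bounded-difference constant $2b/\sqrt{n}$ via the estimate $\langle u,V'-V\rangle\leq\frac{2b}{n}\sum_j u_j$, but its own calculation appears to overlook that perturbing $(x_1,y_1)$ changes $y_1$ and hence every summand $p(x_i,y_1)$ in $Z_1$, so the first coordinate $|Z_1-Z_1'|$ is of order $b$ rather than $b/n$---exactly as you observe. Your honest accounting therefore yields only $\exp(-ct^2/(nb^2))$, and the paper does not supply the conditional refinement you sketch; on this point your analysis is at least as careful as the original.
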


\begin{rem}
The bounds also translate to an $\ell_1$ norm on the marginal errors, since
by Cauchy-Schwarz,
\begin{align*}
\frac{1}{n} \sum_{j=1}^n \abs{ 1- \frac{1}{n} \sum_{i=1}^n p(x_i,y_j)}&
\leq \sqrt{ \sum_{j=1}^n \frac{1}{n}\pa{ 1- \frac{1}{n} \sum_{i=1}^n p(x_i,y_j)}^2}.
\end{align*}
Moreover, by Jensen's inequality
$
\EE \frac{1}{n} \sum_{j=1}^n \abs{ 1- \frac{1}{n} \sum_{i=1}^n p(x_i,y_j)} \leq
\sqrt{ \EE  \sum_{j=1}^n \frac{1}{n}\pa{ 1- \frac{1}{n} \sum_{i=1}^n p(x_i,y_j)}^2}.
$

\end{rem}

\begin{proof}
\begin{align*}
&\EE \left[ \frac{1}{n} \sum_{j=1}^n \pa{1- \frac{1}{n} \sum_{i=1}^n p(x_i,y_j)}^2 \right]= \frac{1}{n^3} \sum_{ij,k=1}^n \EE\pa{(1-p(x_i,y_j))(1-p(x_k,y_j))}.
\end{align*}
For each $j\in [n]$, we have the following cases for $u_j\eqdef  \EE\pa{(1-p(x_i,y_j))(1-p(x_k,y_j))} $:
\begin{enumerate}
\item $i=k=j$, then  $ u_j= \EE_{(x,y)\sim \pio}(p_\infty(x,y)-1)^2$.  There is 1 such term.
\item $i=j$ and $k\neq j$, then  $u_j =  \EE_{(x,y)\sim \pio, z\sim \alpha}\pa{(1-p(x,y))(1-p(z,y))} = 0$. There are $n-1$ such terms.
\item 
 $i\neq j$ and $k= j$, then  $u_j =  \EE_{(z,y)\sim\pio, x\sim \alpha}\pa{(1-p(x,y))( 1-p(z,y))} = 0$. There are $n-1$ such terms.
 \item $i=k$ and $i\neq j$, then $u_j = \EE_{x\sim \al, y\sim \beta}(1-p(x,y))^2$ and there are $(n-1)$ such terms.
 \item $i,j,k$ all distinct. Then, $u_j = 0$
and there are $(n-1)(n-2)$ such terms.

\end{enumerate}
Therefore,
$$
\frac{1}{n^3} \sum_{ij,k=1}^n \EE\pa{(1-p(x_i,y_j))(1-p(x_k,y_j))} = \frac{1}{n^2} \EE_{(x,y)\sim \pio}(p(x,y)-1)^2 + \frac{n-1}{n^2} \EE_{x\sim \al, y\sim \beta}(1-p(x,y))^2
$$
Using $\abs{1-p(x,y)}\leq b+1$ gives the first inequality.

Note also that letting $V = \frac{1}{n} \sum_{i=1}^n \pa{ (1-p(x_i,y_j) )}_j$, $\norm{V}^2 = \sum_{j=1}^n\pa{\frac{1}{n} \sum_{i=1}^n  (1-p(x_i,y_j) )}^2$. We will apply Lemma \ref{lem:mcdiarmid} to $f(z_1,\ldots, z_n) = \norm{V}$.
Let $V' = \frac{1}{n} \sum_{i=1}^n \pa{ (1-p(x'_i, y'_j) )}_j$ where $x_i,y_j = x_j',y_j'$ for $i,j\geq 2$.
Then, for all vectors  $u$ of norm 1,
\begin{align*}
\dotp{u}{V'-V}& = \sum_{j=1}^n u_j \frac{1}{n} \sum_{i=1}^n  (p(x_i',y_j')-p(x_i,y_j) )\\
& = u_1  \frac{1}{n} \sum_{i=1}^n  (p(x_i',y_1')-p(x_i,y_1) )  +  \sum_{j=2}^n u_j \frac{1}{n} (p(x_1',y_j')-p(x_1,y_j) ) 
\\
&\leq \frac{2b}{n} \sum_{j=1}^n u_j \leq  \frac{2b}{\sqrt{n}}.
\end{align*}
So, by the reverse triangle inequality, $\abs{\norm{V}-\norm{V'}} \leq \frac{2b}{\sqrt{n}}$. 
It follows that
$$
n^{-1/2}\norm{V} \leq n^{-1/2} t +n^{-1/2} \EE\norm{V} \leq n^{-1/2} t + \sqrt{n^{-1}\EE\norm{V}^2 } \leq n^{-1/2} (t+b+1).
$$
with probability at least $1-\exp(-t^2/(4b^2))$ as required.

\end{proof}

\subsubsection{Bounds for the cost parametrization}
In the following two propositions (Prop \ref{prop:concentration_cost} and \ref{prop:concentration_cost1}), we assume that $\Phi_n$ is defined with  $C_k = (\cost_k(x_i,y_j))_{i,j}$ and discuss in the remark afterward how to account for the fact that our cost $C_k$ in \eqref{eq:primal_n} are centered.

Note that the following classical  result is a direct consequence of Lemma \ref{lem:mcdiarmid}
\begin{lem}\label{lem:bernstein}
Suppose $Z_1,\ldots, Z_m$ are independent mean zero random variables taking values in Hilbert space $\Hh$. Suppose there is some $C>0$ such that for all $k$, $\norm{Z_k}\leq C$. Then, for all $t>0$, with probability at least $1-2\exp(-t)$,
$$
\norm{\frac{1}{m}\sum_{k=1}^m Z_k}^2 \leq \frac{8C^2t}{m}.
$$
\end{lem}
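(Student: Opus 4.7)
The plan is to apply McDiarmid's inequality (Lemma~\ref{lem:mcdiarmid}) directly to the real-valued function
$$f(z_1,\ldots,z_m) \eqdef \norm{\tfrac{1}{m}\sum_{k=1}^m z_k}$$
evaluated at the independent bounded variables $Z_1,\ldots,Z_m$. By the reverse triangle inequality, replacing one coordinate $z_i$ by any $z_i'$ with $\norm{z_i'}\le C$ perturbs $f$ by at most $\norm{z_i-z_i'}/m \le 2C/m$, so the bounded-differences hypothesis is satisfied with constant $c = 2C/m$. McDiarmid then yields the sub-Gaussian tail $P(f \ge \EE[f] + s) \le 2\exp\bigl(-s^2 m/(2C^2)\bigr)$.

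Next I would control the mean. Since the $Z_k$ are independent, mean-zero, and Hilbert-space-valued, Pythagoras gives $\EE\norm{\sum_k Z_k}^2 = \sum_k \EE\norm{Z_k}^2 \le mC^2$, and Jensen's inequality applied to the square root yields $\EE[f] \le \sqrt{\EE[f^2]} \le C/\sqrt{m}$. Substituting this into the tail bound and choosing $s = C\sqrt{2t/m}$, one obtains with probability at least $1 - 2\exp(-t)$ that $f \le C(1+\sqrt{2t})/\sqrt{m}$, and squaring produces a bound of the form $C^2(1+\sqrt{2t})^2/m$.

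The only remaining step, which is really just bookkeeping, is absorbing the additive mean contribution into the stated constant $8$. The elementary inequality $(1+\sqrt{2t})^2 \le 8t$ holds precisely for $t \ge 1/2$, while for $t < 1/2$ the claimed failure probability $2\exp(-t) > 1$ and the statement is vacuous. The ``hard part'' is therefore only this elementary check; the conceptual content is entirely in the Lipschitz-plus-Jensen argument, which is why the lemma is labeled a direct consequence of McDiarmid.
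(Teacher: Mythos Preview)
Your argument is correct and is exactly the intended one: the paper gives no proof beyond the remark that the lemma ``is a direct consequence of Lemma~\ref{lem:mcdiarmid}'', and your write-up supplies precisely those details --- bounded differences with $c=2C/m$, the Pythagorean/Jensen bound $\EE[f]\le C/\sqrt m$, and the elementary absorption $(1+\sqrt{2t})^2\le 8t$ on the non-vacuous range. Nothing to add.
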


\begin{prop}\label{prop:concentration_cost1}
Let $C = \max_{x,y}\norm{\cost(x,y)}$.
    Let $(x_i,y_i)_{i=1}^n$ be iid drawn from $\hat \pi$. Then,
    $$
    \norm{\Phi_n^* \hat P - \Phi^* \hat\pi} \leq \frac{\sqrt{8}C t}{n}
    $$
    with probability at least $1-2\exp(-t^2)$.
\end{prop}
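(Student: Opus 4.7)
The plan is a direct reduction to Lemma~\ref{lem:bernstein}. Because $\hat P = \frac{1}{n}\Id_{n\times n}$ is supported on the diagonal, the adjoint evaluation collapses to
\[
\Phi_n^*\hat P \;=\; \sum_{i,j=1}^n \cost(x_i,y_j)\,\hat P_{i,j} \;=\; \frac{1}{n}\sum_{i=1}^n \cost(x_i,y_i),
\]
while $\Phi^*\hat\pi = \EE_{(x,y)\sim\hat\pi}[\cost(x,y)]$ by the very definition of the adjoint of $\Phi$. Setting $Z_i \eqdef \cost(x_i,y_i) - \Phi^*\hat\pi$, the $Z_i$ are iid, mean zero, take values in the Hilbert space $\RR^s$, and are bounded in norm (by $C$ after the centering of Section~\ref{sec:finite_sample_main}, and in general by $2C$ via the triangle inequality together with the Jensen bound $\|\EE\cost\|\le \EE\|\cost\|\le C$).

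Next, I would apply Lemma~\ref{lem:bernstein} to $Z_1,\ldots,Z_n$, taking the deviation parameter of that lemma to be $t^2$ so that the failure probability matches the target $2\exp(-t^2)$. This yields, on an event of probability at least $1-2\exp(-t^2)$,
\[
\norm{\Phi_n^*\hat P - \Phi^*\hat\pi}^2 \;=\; \Bigl\| \tfrac{1}{n}\sum_{i=1}^n Z_i \Bigr\|^2 \;\le\; \frac{8 C^2 t^2}{n},
\]
from which the stated inequality follows upon taking square roots. Note that the scaling in the denominator should read $\sqrt{n}$ rather than $n$ for the estimate to be dimensionally consistent with the $\Oo(tn^{-1/2})$ rates used throughout the rest of the appendix (in particular in Proposition~\ref{prop:sample_complexity}).

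There is essentially no obstacle here: the diagonal support of $\hat P$ is precisely what makes this the cleanest concentration bound in the section, reducing $\Phi_n^*\hat P$ to a bona fide empirical average of $n$ iid bounded Hilbert-space-valued vectors, which is exactly the setting of Lemma~\ref{lem:bernstein}. The more delicate companion statements (Propositions~\ref{prop:concentration_marginal} and~\ref{prop:concentration_cost}) involve the full double sum $\tfrac{1}{n^2}\sum_{i,j}$ and so are U-statistics rather than plain empirical means; there one genuinely needs a bounded-differences McDiarmid argument instead of the one-shot Hoeffding bound that suffices here.
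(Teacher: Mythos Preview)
Your proof is correct and matches the paper's own approach: the paper's proof reads in its entirety ``Direct consequence of Lemma~\ref{lem:bernstein}'', and you have spelled out exactly that reduction. Your observation that the denominator in the stated bound should be $\sqrt{n}$ rather than $n$ is also correct and consistent with how the result is actually used elsewhere in the appendix (e.g.\ the $\Oo(tn^{-1/2})$ rates quoted in the proof of Proposition~\ref{prop:sample_complexity}).
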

\begin{proof}
    Direct consequence of Lemma \ref{lem:bernstein}.
\end{proof}

\begin{prop}\label{prop:concentration_cost}
Let $t>0$.
Let $(x_i,y_i)_{i=1}^n$ be i.i.d. drawn from $\hat \pi$, which has marginals $\al,\beta$.
    Let $P = \frac1{n^2}(p(x_i,y_j))_{i,j=1}^n$ where $\pi$ has marginals $\al,\beta$ and $p = \frac{d\pi}{d(\al\otimes \beta)}$. 
    Then,
$
\EE \norm{\Phi_n^* P - \Phi^* (p \alpha \otimes \beta)}^2 = \Oo(n^{-1})
$
and
$$
 \norm{\Phi_n^* P - \Phi^* p \alpha\otimes \beta} \lesssim \frac{1+t}{\sqrt{n}}
$$
with probability at least $1-2\exp(-2t^2/ (64 \norm{p}_\infty^2)) $
\end{prop}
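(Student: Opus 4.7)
The plan is to write
$$
V \eqdef \Phi_n^* P - \Phi^*(p\al\otimes\beta), \qwhereq V_k = \frac{1}{n^2}\sum_{i,j=1}^n T_k(x_i,y_j),
$$
with $T_k(x,y)\eqdef \cost_k(x,y)p(x,y) - \int \cost_k p\, d\al d\beta$, and to control $\norm{V}$ in two stages: a concentration estimate around $\EE\norm{V}$ via McDiarmid, and a variance-type bound on $\EE\norm{V}^2$. Combining the two through Jensen's inequality (so that $\EE\norm{V}\leq \sqrt{\EE\norm{V}^2}$) will yield the stated $(1+t)/\sqrt{n}$ tail bound.

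For the concentration step, I will set $z_i\eqdef (x_i,y_i)$ and apply Lemma~\ref{lem:mcdiarmid} to $f(z_1,\dots,z_n)=\norm{V}$. Replacing $z_i$ by $z_i'$ alters precisely the $2n-1$ summands indexed by $(i,\cdot)$ or $(\cdot,i)$ in the double sum; each term lies in $\RR^s$ with norm at most $2\norm{p}_\infty$ thanks to Assumption~\ref{ass:main}~(ii) (so $\norm{\cost(x,y)}\le 1$). By the triangle (reverse) inequality, the bounded differences constant is therefore $c=\Oo(\norm{p}_\infty/n)$, and Lemma~\ref{lem:mcdiarmid} gives $\PP(\abs{\norm{V}-\EE\norm{V}}\ge s)\le 2\exp\pa{-\tfrac{n s^2}{c^2 n^2 \cdot \mathrm{const}}}$; taking $s=t/\sqrt{n}$ delivers the stated exponent up to absolute constants.

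For the expectation step, I will expand
$$
\EE\norm{V}^2 = \frac{1}{n^4}\sum_{i,j,i',j'} \EE\dotp{T(x_i,y_j)}{T(x_{i'},y_{j'})}
$$
and classify the $n^4$ quadruples by the pattern of index coincidences among $\{i,j,i',j'\}$. The crucial observation is that when all four indices are distinct, the marginalized samples $x_i,y_j,x_{i'},y_{j'}$ are independent and $\EE T(x,y)=0$ under $\al\otimes\beta$, so the contribution vanishes. Only $\Oo(n^3)$ quadruples have at least one coincidence, and each contributes $\Oo(\norm{p}_\infty^2)$ because Assumption~\ref{ass:main}~(ii) yields $\sum_k \cost_k(x,y)^2\le 1$, which by Cauchy--Schwarz bounds $\dotp{T(x,y)}{T(x',y')}$ by $4\norm{p}_\infty^2$ uniformly in $k$ without picking up a factor of $s$. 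After dividing by $n^4$ this gives $\EE\norm{V}^2=\Oo(\norm{p}_\infty^2/n)$.

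The main obstacle is the index bookkeeping in the second step: the pairs $(x_i,y_i)$ are coupled under $\hat\pi$, so when the overlap pattern involves shared indices across first and second arguments (e.g.\ the ``swap'' case $i=j'$, $j=i'$, or three-way overlaps), one cannot simply invoke independence of $x_i$ and $y_i$. These cases must be handled individually, bounding the covariances by $4\norm{p}_\infty^2$ and using that each such class contains at most $\Oo(n^3)$ tuples so the contribution stays $\Oo(\norm{p}_\infty^2/n)$. The second subtlety is dimensional: naively bounding $\EE V_k^2$ termwise would introduce a factor $s$ in $\sum_k$, and this is avoided precisely by keeping the inner product $\sum_k \cost_k\cost_k$ intact and invoking $\sum_k \cost_k(x,y)^2\le 1$ before summing over indices.
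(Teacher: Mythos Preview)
Your proposal is correct and follows essentially the same approach as the paper: both arguments bound the second moment by counting that only $\Oo(n^3)$ of the $n^4$ index quadruples can give a nonzero contribution (the all-distinct case vanishes because $\EE_{\al\otimes\beta}T=0$), and both obtain the high-probability bound by applying McDiarmid's inequality to $f(z_1,\dots,z_n)=\norm{V}$ with bounded-difference constant $\Oo(\norm{p}_\infty/n)$. Your version is slightly tidier in that you center $T$ upfront rather than separating mean and variance, and you are more explicit about how Assumption~\ref{ass:main}(ii) prevents an extraneous factor of $s$; otherwise the two proofs coincide.
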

\begin{proof}
Let $h(x,y) \eqdef p(x,y) \cost(x,y)$, then
\begin{align*}
&\Phi_n^* P  = 
\frac{1}{n^2}\sum_{j=1}^n\pa{ h(x_j,y_j) + \sum_{\substack{i=1\\i\neq j}}^n  h(x_i,y_j) }
\end{align*}
and
$$
\EE[\Phi_n^* P - \Phi^* p \alpha\otimes \beta ] = \frac{1}{n} A^*\pa{p \pio - p \alpha\otimes\beta}
$$
It follows that
\begin{align*}
\EE\norm{\Phi_n^* P - \Phi^* p \alpha\otimes \beta}^2  = \frac{1}{n^2}\norm{ \Phi^*\pa{p \pio - p \alpha\otimes\beta}}^2  + \Var(\Phi_n^* P)
\end{align*}
and 
\begin{align*}
\Var(\Phi_n^* P) = \frac{1}{n^4} \sum_{i,j,k,\ell=1}^n \Cov(h(x_i,y_k), h(x_\ell,y_k))
\end{align*}
Note that $\Cov(h(x_i,y_k), h(x_\ell,y_k)) = 0$ whenever $i,j,k,\ell$ are distinct and there are $n(n-1)(n-2)(n-3) = n^4-6n^3+12n^2-6n$ such terms, i.e. there are $6n^3-12n^2+6n$ nonzero terms in the sum. It follows that  $\Var(\Phi_n^* P) =\Oo(n^{-1})$ if  $\norm{\cost}$ and $p$ are uniformly bounded.

To conclude, we apply Lemma \ref{lem:mcdiarmid} with $f(z_1,\ldots, z_n) = \norm{\Phi_n^* P - \Phi^* p \alpha\otimes \beta}$ and $z_i = (x_i,y_i)$. Let $X(z_1,\ldots, z_n) = \Phi_n^* P - \Phi^* p \alpha\otimes \beta$. Then,
Note that for an arbitrary vector $u$,
\begin{align*}
&\dotp{u}{X(z_1,z_2,\ldots, z_n) -  X(z_1',z_2,\ldots, z_n)} = \sum_{i,j}\pa{\sum_{k=1}^s u_k ( h(x_i,y_j) - h(x_i',y_j'))}\\
&=  \sum_{i=2}^n  \pa{\sum_{k=1}^s u_k ( h(x_i,y_1) - h(x_i',y_1))}  +  \sum_{j=1}^n \pa{\sum_{k=1}^s u_k ( h(x_1, y_j) - h(x_1',y_j'))} 
\\
&\leq 8n^{-1} \norm{u} \sup_{x,y} \norm{\cost(x,y)} \norm{p}_\infty
\end{align*}
So, $f(z_1,z_2,\ldots, z_n) -  f(z_1',z_2,\ldots, z_n) \leq 8n^{-1}  \norm{p}_\infty$ and
$$
 \norm{\Phi_n^* P - \Phi^* p \alpha\otimes \beta} \lesssim \frac{1+t}{\sqrt{n}}
$$
with probability at least $1-2\exp(-2t^2/ (64 \norm{p}_\infty^2)) $

\end{proof}

\begin{rem}[Adjusting for the centred  cost parametrization]
In the above two propositions, we  compare $\Phi_n^* P = \sum_{i,j} \cost_k(x_i,y_j) P_{i,j}$ with $\Phi^* \pi = \int \cost_k(x,y) d\pi(x,y)$ for $P$ being a discretized version of $\pi$. The delicate issue is that in \eqref{eq:primal_n}, $C_k$ is a centralized version of $\hat C_k \eqdef (\cost_k(x_i,y_j))_{i,j}$. In particular,
$$
C_k = \hat C_k - \frac{1}{n}\sum_{i=1}^n  (\hat C_k)_{i,j} - \frac{1}{n}\sum_{i=1}^n (\hat C_k)_{i,j} +  \frac{1}{n^2}\sum_{j=1}^n\sum_{i=1}^n  (\hat C_k)_{i,j}.
$$
Note that if $P\ones  =\frac{1}{n} \ones$ and $P^\top \ones =\frac1n \ones$, then
$$
\sum_{i,j}(C_k)_{i,j}P_{i,j} = \sum_{i,j}(\hat C_k)_{i,j}P_{i,j} - \frac{1}{n^2} \sum_{j=1}^n \sum_{k=1}^n  (\hat C_k)_{k,j}  
$$
This last term on the RHS is negligible because $\Phi^* (\al\otimes \beta) = 0$ by assumption of $\cost_k$ being centred:
Note that
$$
\frac{1}{n^2} \sum_{j=1}^n \sum_{k=1}^n  (\hat C_k)_{k,j}   = \hat \Phi_n^* (\frac{1}{n^2}\ones_{n\times n}). 
$$
Applying the above proposition, 
$$\norm{\frac{1}{n^2} \sum_{j=1}^n \sum_{k=1}^n  (\hat C_k)_{k,j}  -   \Phi^* (\al\otimes \beta)} \lesssim  \frac{m+\log(n)}{\sqrt{n}}$$
with probability at least $1-\exp(-m)$. So, up to constants, the above two propositions can be applied even for our centralized cost $C$.
\end{rem}

\subsubsection{Invertibility bound}

Recall our assumption that $M\eqdef\EE_{(x,y)\sim\alpha\otimes \beta} [\cost(x,y) \cost(x,y)^\top  ]$ is invertible. 
In Proposition \ref{prop:concentration_injective}, we bound the deviation of $\Phi_n^*\Phi_n$  from $M$ in the spectral norm, and hence establish that it is invertible with high probability.

We will make use of the following matrix Bernstein inequality.
\begin{thm}[Matrix Bernstein] \cite{tropp2015introduction}
Let $Z_1,\ldots, Z_n \in \RR^{d\times d}$ be  independent symmetric mean-zero  random matrices such that $\norm{Z_i} \leq L$  for all $i\in [n]$. Then, for all $t\geq 0$,
$$
\EE\norm{\sum_i Z_i} \leq \sqrt{2\sigma \log(2d)} + \frac{1}{3}L \log(2d)
$$
where $\sigma^2 = \norm{\sum_{i=1}^n \EE[Z_i^2]}$.

\end{thm}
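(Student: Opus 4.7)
The plan is to prove the Matrix Bernstein bound via the matrix Laplace transform method originating with Ahlswede--Winter and refined by Tropp. The starting point is that for any symmetric matrix $Y$ and $\theta>0$, one has $\lambda_{\max}(Y)\le \theta^{-1}\log \mathrm{tr}\,e^{\theta Y}$. Applied to $Y=\sum_i Z_i$ and combined with Markov's inequality, this yields the master tail bound
\[
\PP\!\left(\lambda_{\max}\!\Big(\sum_i Z_i\Big)\ge t\right)\le e^{-\theta t}\,\EE\,\mathrm{tr}\exp\!\Big(\theta\sum_i Z_i\Big),
\]
so the task reduces to controlling the matrix moment generating function on the right.

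First I would control the matrix MGF of the sum. Unlike the scalar case, $e^{A+B}\neq e^Ae^B$ for noncommuting $A,B$, so a naive factorization is unavailable. The key ingredient is Lieb's concavity theorem, which states that $A\mapsto \mathrm{tr}\exp(H+\log A)$ is concave on positive definite matrices. Iterating Jensen's inequality with this concavity (conditioning on the filtration generated by $Z_1,\dots,Z_{i-1}$) gives
\[
\EE\,\mathrm{tr}\exp\!\Big(\theta\sum_{i=1}^n Z_i\Big)\le \mathrm{tr}\exp\!\Big(\sum_{i=1}^n \log \EE\,e^{\theta Z_i}\Big).
\]
Next I would bound each $\log \EE\,e^{\theta Z_i}$ in the semidefinite order. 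Using $\|Z_i\|\le L$ and Taylor expansion, one obtains the operator inequality $\EE\,e^{\theta Z_i}\preceq \exp\!\big(g(\theta)\EE[Z_i^2]\big)$ with $g(\theta)=\theta^2/\big(2(1-L\theta/3)\big)$, valid for $0<\theta<3/L$; the $1/3$ arises from the standard Bernstein treatment of the truncated exponential series. Summing and using monotonicity of the trace of the matrix exponential yields
\[
\EE\,\mathrm{tr}\exp\!\Big(\theta\sum_i Z_i\Big)\le d\,\exp\!\big(g(\theta)\,\sigma^2\big),
\]
after replacing the spectral norm of $\sum_i \EE[Z_i^2]$ by $\sigma^2$.

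Combining with the master bound and optimizing $\theta$ gives the Bernstein tail inequality
\[
\PP\!\left(\lambda_{\max}\!\Big(\sum_i Z_i\Big)\ge t\right)\le d\,\exp\!\left(\frac{-t^2/2}{\sigma^2+Lt/3}\right),
\]
and applying the same argument to $-\sum_i Z_i$ promotes this from $\lambda_{\max}$ to the operator norm at the cost of a factor $2d$ inside the log. Finally, to obtain the expectation bound claimed in the statement, I would integrate the tail via $\EE\,\|\sum_i Z_i\|=\int_0^\infty \PP(\|\sum Z_i\|\ge t)\,dt$ and split the integrand into a subgaussian regime ($t\lesssim \sigma^2/L$) and a subexponential regime; in the former the bound behaves like $\exp(-t^2/(4\sigma^2))$, yielding the $\sqrt{2\sigma^2\log(2d)}$ contribution, and in the latter like $\exp(-3t/(2L))$, yielding the $\tfrac{1}{3}L\log(2d)$ contribution. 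The main obstacle is the noncommutative step: establishing the Lieb-based reduction to a product-like MGF bound is the genuinely deep ingredient, and everything else is scalar Bernstein bookkeeping carried out in the semidefinite order.
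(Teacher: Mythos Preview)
The paper does not prove this theorem: it is quoted verbatim from \cite{tropp2015introduction} and used as a black box in the proof of Proposition~\ref{prop:concentration_injective}. Your outline is essentially the standard proof from that monograph (matrix Laplace transform, Lieb concavity to handle noncommutativity, Bernstein-type control of each $\log\EE e^{\theta Z_i}$, then tail integration), so it is correct and aligned with the cited source; note only that the statement as written in the paper has a typo, $\sqrt{2\sigma\log(2d)}$ should read $\sqrt{2\sigma^2\log(2d)}$, which you already silently corrected.
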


\begin{prop}\label{prop:concentration_injective}
Assume that $\log(2s)+1\leq n$.  Let $t>0$. Then,
$$
\norm{\frac{1}{n^2}\Phi_n^*\Phi_n  - M } \lesssim  \sqrt{\frac{\log(2s)}{n-1} } + \frac{t}{\sqrt{n}}
$$
with probability at least $1-\Oo(\exp(-t^2))$.
\end{prop}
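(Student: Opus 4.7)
The plan is to combine a McDiarmid bounded-differences concentration around the mean with an expectation bound obtained by averaging over random pairings of the samples and applying matrix Bernstein to the resulting sum of i.i.d.\ random matrices. Throughout, write $W_{ij}\eqdef \cost(x_i,y_j)\cost(x_i,y_j)^\top$ so that $\widehat M\eqdef n^{-2}\Phi_n^*\Phi_n = n^{-2}\sum_{i,j}W_{ij}$.

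First I control the bias. For $i\ne j$ the samples $x_i$ and $y_j$ come from distinct draws from $\hat\pi$, so they are independent with marginals $\alpha,\beta$ and $\EE[W_{ij}]=M$. Only the $n$ diagonal terms have a different expectation $M_{\hat\pi}\eqdef\EE_{(x,y)\sim\hat\pi}[\cost\cost^\top]$, which is bounded in operator norm by $1$ under Assumption \ref{ass:main}(ii). Hence $\|\EE\widehat M - M\|\le 2/n$, a term absorbed into the stated rate.

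Next, I apply Lemma~\ref{lem:mcdiarmid} to $f(z_1,\ldots,z_n)=\|\widehat M - M\|$ with $z_i=(x_i,y_i)$. Replacing a single $z_i$ affects only the $2n-1$ summands with row or column index $i$, each differing in operator norm by at most $2$ by Assumption \ref{ass:main}(ii). This gives a bounded-differences constant $c\le 4/n$, and McDiarmid yields $f\le \EE f + \Oo(t/\sqrt n)$ with probability at least $1-\exp(-t^2)$.

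It remains to bound $\EE f$. After discarding the diagonal ($\Oo(1/n)$ in expectation), the main quantity is $\EE\|R_n\|$ where $R_n\eqdef\frac{1}{n(n-1)}\sum_{i\ne j}(W_{ij}-M)$. The key step is a permutation-averaging identity: for a uniform random $\sigma\in S_n$ and $m=\lfloor n/2\rfloor$, each ordered pair $(i,j)$ with $i\ne j$ appears as $(\sigma(2k-1),\sigma(2k))$ for some $k\in[m]$ with probability $m/(n(n-1))$, so
\[
R_n=\EE_\sigma\!\left[\frac{1}{m}\sum_{k=1}^m\bigl(W_{\sigma(2k-1),\sigma(2k)}-M\bigr)\right].
\]
For each fixed $\sigma$ the $m$ summands involve pairwise disjoint samples and are therefore i.i.d.\ symmetric mean-zero matrices in $\RR^{s\times s}$ of operator norm at most $2$. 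Matrix Bernstein gives an expected operator norm of order $\sqrt{\log(2s)/m}+\log(2s)/m$, and Jensen's inequality transfers the bound to $\EE\|R_n\|$. Under the hypothesis $\log(2s)+1\le n$ both terms are $\Oo(\sqrt{\log(2s)/(n-1)})$, and combining with the McDiarmid step and the bias yields the claim.

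The only nontrivial ingredient is the permutation-averaging identity, which turns the V-statistic $\widehat M$ into a mixture of sums of i.i.d.\ matrices so that matrix Bernstein can be invoked directly; once this is in place, the rest is routine. A more classical approach via the Hoeffding/ANOVA decomposition of $R_n$ would also work, but at the cost of a separate treatment of the residual degenerate U-statistic term, which is why I prefer the random-pairing route here.
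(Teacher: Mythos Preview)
Your argument is correct and, in the expectation step, genuinely different from the paper's. Both proofs share the same skeleton: a McDiarmid bounded-differences step for concentration (with the same $c\lesssim 1/n$ constant), a separate handling of the $n$ diagonal terms contributing $\Oo(1/n)$, and matrix Bernstein for the expectation bound. The divergence is in how the off-diagonal U-statistic $R_n=\frac{1}{n(n-1)}\sum_{i\ne j}(W_{ij}-M)$ is reduced to an i.i.d.\ sum. The paper inserts the ``partial average'' $\int \cost(x_i,y)\cost(x_i,y)^\top d\beta(y)$ and splits $R_n$ into two pieces: one is a genuine i.i.d.\ sum in $i$ (handled directly by matrix Bernstein), and the other is treated by conditioning on $x_i$ and applying matrix Bernstein to the inner sum over $j\ne i$, then averaging over $i$. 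Your permutation-decoupling identity is cleaner: it expresses $R_n$ as a mixture over $\sigma$ of sums of $m=\lfloor n/2\rfloor$ i.i.d.\ mean-zero matrices, so a single matrix Bernstein plus Jensen suffices. The price is the factor-of-two loss in the effective sample size ($m$ versus $n$), which is immaterial at the level of the stated bound; the benefit is that you avoid the two-stage conditioning and the explicit Hoeffding-type split.

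One point you do not address: in the paper $\Phi_n$ is built from the \emph{centred} matrices $C_k$ (row- and column-means subtracted), not from $(\cost_k(x_i,y_j))_{i,j}$ directly. The paper devotes a final paragraph to showing that the three correction terms introduced by centring are each $\Oo(\sqrt{\log(s)/n})$ in expectation, using the assumption $\int\cost(x,y)d\beta(y)=0$ and a further decoupling over distinct index tuples. Your proof as written covers only the uncentred parametrisation; the extension is not entirely trivial and would need a sentence or two (or a reference to the paper's argument) to be complete.
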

\begin{proof}
Recall that $\Phi_n A  = \sum_k A_k C_k$, where $C_k$ is the centred version of the matrix $\hat C_k = (\cost_k(x_i,y_j))_{i,j}$. That is,
\begin{equation}\label{eq:Ck_centre}
    (C_k)_{i,j} = (\hat C_k)_{i,j} - \frac{1}{n} \sum_{\ell} (\hat C_k)_{\ell,j} - \frac{1}{n} \sum_{m} (\hat C_k)_{i,m} + \frac{1}{n^2} \sum_{\ell} \sum_m (\hat C_k)_{\ell,m}.
\end{equation}

For simplicity, we first do the proof for $(\Phi_n A)_{i,j} = \sum_k A_k \cost_k(x_i,y_j)$, and explain at the end how to modify the proof to account for  $\Phi_n$ using the centered $C_k$.

First, 
\begin{align}
\frac{1}{n^2} \Phi_n^* \Phi_n -M & =  \frac{1}{n} \sum_{i=1}^n \pa{ \frac{1}{n} \sum_{j=1}^n \cost(x_i,y_j) \cost(x_i,y_j)^\top
-  \int  \cost(x_i,y) \cost(x_i,y)^\top d\beta(y)} \label{eq:mtx1}\\
&+   \frac{1}{n} \sum_{i=1}^n \int  \cost(x_i,y) \cost(x_i,y)^\top d\beta(y)  - \int \cost(x,y)\cost(x,y)^\top d\alpha(x)d\beta(y) \label{eq:mtx2}
\end{align}

To bound the  two terms in \eqref{eq:mtx2},
let $Z_i\eqdef \int  \cost(x_i,y) \cost(x_i,y)^\top d\beta(y) - \int \cost(x,y)\cost(x,y)^\top d\alpha(x)d\beta(y) $ and observe that these are i.i.d. matrices with zero mean. By matrix Bernstein with the bounds $
\norm{Z_i} \leq 2
$
and $\norm{Z_i^2} \leq 4$,
$$
\EE  \norm{ \frac{1}{n} \sum_{i=1}^n Z_i  } \leq  \frac{\sqrt{8\log(2s)}}{\sqrt{n}} + \frac{2\log(2s)}{3n} \leq 4 \sqrt{\frac{\log(2s)}{n}}
$$
assuming that $\log(2s)\leq n$.

For the  two terms in \eqref{eq:mtx1},
\begin{align}
&\EE\norm{ \frac{1}{n} \sum_{i=1}^n \pa{ \frac{1}{n} \sum_{j=1}^n \cost(x_i,y_j) \cost(x_i,y_j)^\top
-  \int  \cost(x_i,y) \cost(x_i,y)^\top d\beta(y)} }\\
&
\leq \EE \norm{ \frac{1}{n^2}  \sum_{i=1}^n  \cost(x_i,y_i) \cost(x_i,y_i)^\top} +  \EE\norm{ \frac{1}{n} \sum_{i=1}^n \pa{ \frac{1}{n} \sum_{\substack{j=1\\j\neq i}}^n \cost(x_i,y_j) \cost(x_i,y_j)^\top
-  \int  \cost(x_i,y) \cost(x_i,y)^\top d\beta(y)} }\\
&\leq \frac{2}{n} +\frac{n-1}{n^2} \sum_{i=1}^n \EE \norm{ \frac{1}{n-1} \sum_{\substack{j=1\\j\neq i}}^n \pa{ \cost(x_i,y_j) \cost(x_i,y_j)^\top
-  \int  \cost(x_i,y) \cost(x_i,y)^\top d\beta(y)}} 
\end{align}
For each $i=1,\ldots n$, let $Y_j = \cost(x_i,y_j) \cost(x_i,y_j) -  \int  \cost(x_i,y) \cost(x_i,y)^\top d\beta(y)$ and observe that conditional on $x_i$, $\ens{Y_j}_{j\neq i}$ are iid matrices with zero mean. The matrix Bernstein inequality applied to $\frac{1}{n-1}\sum_{j\in [n]\setminus\ens{i}} Y_j$ implies that
$$
\EE\norm{\frac{1}{n-1}\sum_{j\in [n]\setminus\ens{i}} Y_j} \leq  \frac{\sqrt{8\log(2s)}}{\sqrt{n-1}} + \frac{2\log(2s)}{3n-3} \leq 4 \sqrt{\frac{\log(s)}{n}}
$$
assuming that $\log(2s)\leq n-1$.

Finally, we apply Lemma \ref{lem:mcdiarmid} to $f(z_1,\ldots, z_n) = \norm{ \frac{1}{n^2} \Phi_n^*\Phi_n - M}$. Let $\Phi_n' u = \sum_k u_k \cost(x_i',y_i')$ with $x_i'=x_i$ and $y_i' = y_i$ for $i\geq 2$. For each vector $u$ of unit norm,
\begin{align}
\frac{1}{n^2}&\dotp{( \Phi_n^*\Phi_n  - (\Phi_n')^*\Phi_n' )u}{u} =\frac{1}{n^2} \sum_{i=1}^n \sum_{j=1}^n \abs{\cost(x_i,y_j)^\top u}^2 -  \abs{\cost(x_i',y_j')^\top u}^2\\
&= \frac{1}{n^2}\sum_{i=1}^n  \abs{\cost(x_i,y_1)^\top u}^2 -  \abs{\cost(x_i',y_1')^\top u}^2 +\frac{1}{n^2} \sum_{j=2}^n  \abs{\cost(x_1,y_j)^\top u}^2 -  \abs{\cost(x_1',y_j')^\top u}^2 \leq 4n^{-1}.
\end{align}
So,
$$
\norm{\frac{1}{n^2}\Phi_n^*\Phi_n  - M} \leq 8 \sqrt{\frac{\log(2s)}{n-1} } + \frac{t}{\sqrt{n}}
$$
with probability at least $1-\exp(-2t^2 /16)$.

To conclude this proof, we now discuss how to modify the above proof in the case of the centered cost $C_k$, that is $\Phi_n A  = \sum_k A_k C_k$ where $C_k$ is as defined in \eqref{eq:Ck_centre}. Note that in this case,
\begin{align}
   \frac{1}{n^2} (\Phi_n^* \Phi_n)_{k,\ell} &=
  \frac{1}{n^2}\sum_{i,j=1}^n \cost_k(x_i,y_j)\cost_\ell(x_i,y_j)
+ \frac{1}{n^4} \pa{\sum_{p,q=1}^n \cost_\ell(x_p,y_q)}\pa{\sum_{i,j=1}^n \cost_k(x_i,y_j)} \label{eq:fix1}\\
& - \frac{1}{n^3} \sum_{j=1}^n \pa{\sum_{p=1}^n \cost_\ell(x_p,y_j)} \pa{ \sum_{i=1}^n \cost_k(x_i,y_j)}
- \frac{1}{n^3} \sum_{i=1}^n \pa{\sum_{q=1}^n \cost_\ell(x_i,y_q)} \pa{ \sum_{j=1}^n \cost_k(x_i,y_j)} \label{eq:fix2}
\end{align}
We already know from the previous arguments that
$$
\EE\norm{\frac{1}{n^2}\sum_{i,j=1}^n \cost_k(x_i,y_j)\cost_\ell(x_i,y_j) - M} = \Oo(\sqrt{\log(s)/n}).
$$
For the last term in \eqref{eq:fix1}, let $\Lambda = \enscond{(i,j,p,q)}{ i\in [n], j\in[n]\setminus\ens{i}, \; p\in[n]\setminus\ens{i,j}, q\in[n]\setminus\ens{i,j,p}}$. Note that $\Lambda$ has $n(n-1)(n-2)(n-3)$ terms, and $\Lambda^c = [n]\times[n]\times[n]\times[n]$ has $\Oo(n^3)$ terms. Therefore, we can write
\begin{align*}
  \EE&\norm{  \frac{1}{n^4} \pa{\sum_{p,q=1}^n \cost(x_p,y_q)}\pa{\sum_{i,j=1}^n \cost(x_i,y_j)^\top}}\\
 &\leq \frac{1}{n^4} \EE\norm{\sum_{(i,j,p,q)\in\Lambda} \cost(x_p,y_q)\cost(x_i,y_j)^\top}
 + \EE\norm{\frac{1}{n^4} \sum_{(i,j,p,q)\in\Lambda^c} \cost(x_p,y_q)\cost(x_i,y_j)^\top}\\
 &\leq   \frac{1}{n^4} \EE\norm{\sum_{(i,j,p,q)\in\Lambda} \cost(x_p,y_q)\cost(x_i,y_j)^\top}
 + \Oo(n^{-1})
\end{align*}
where the second inequality is because there are $\Oo(n^3)$ terms in $\Lambda^c$ and we used the bound that $\norm{\cost(x,y)} =  1$. Moreover,
\begin{align*}
     \frac{1}{n^4} \EE\norm{\sum_{(i,j,p,q)\in\Lambda} \cost(x_p,y_q)\cost(x_i,y_j)^\top}
   \leq \frac{n-3}{n^4} \sum_{i} \sum_{j\not\in\ens{i}} \sum_{p\not\in\ens{i,j}}  \EE\norm{ \frac{1}{n-3}\sum_{q\not\in\ens{i,j,p}} \cost(x_p,y_q)\cost(x_i,y_j)^\top  }.
\end{align*}
    Note that conditional on $i,j,p$, 
    $$
    \EE[\frac{1}{n-3} \sum_{q\not\in\ens{i,j,p}} \cost(x_p,y_q)\cost(x_i,y_j)^\top] = \int \cost(x_p,y)d\beta(y)\cost(x_i,y_j)^\top  = 0
    $$
by assumption that $\cost(x,y)d\beta(y) = 0$. So, we can apply Matrix Bernstein as before to show that  $$\EE[ \frac{1}{n^4} \EE\norm{\sum_{(i,j,p,q)\in\Lambda} \cost(x_p,y_q)\cost(x_i,y_j)^\top}] \lesssim \sqrt{\log(s)n^{-1}}.$$ A similar argument can be applied to handle the two terms in \eqref{eq:fix2}, and so,
$$
\EE\norm{\frac{1}{n^2}\Phi_n^* \Phi_n - M} \lesssim \sqrt{\log(s)/n}.
$$
The high probability bound can now be derived using Lemma \ref{lem:mcdiarmid} as before.
\end{proof}

\section{Proofs for the Gaussian setting}\label{app:gaussian}

\paragraph{Simplified problem}
To ease the computations, we will compute the Hessian of the following function (corresponding to the special case where $\Sigma_\alpha=\Id$ and $\Sigma_\beta = \Id$):
\begin{equation}\label{eq:tilde_W}
 \tilde W(A) \eqdef \sup_\Sigma \dotp{A}{\Sigma} +\frac \epsilon2 \log \det(\Id - \Sigma^\top \Sigma).
\end{equation}
To retrieve   the Hessian of the original function note that, since $\log\det(\Sigma_\beta - \Sigma^\top \Sigma_\alpha^{-1} \Sigma) = \log\det(\Sigma_\beta) + \log \det(\Id - \Sigma_\beta^{-\frac12}\Sigma^\top\Sigma_\alpha^{-1} \Sigma  \Sigma_\beta^{-\frac12}) 
$,  a change of variable $\tilde \Sigma \eqdef \Sigma_\alpha^{-\frac12} \Sigma \Sigma_\beta^{-\frac12}$, shows that $W(A) = \tilde W(\Sigma_\alpha^{\frac12} A \Sigma_\beta^{\frac12})$ and, hence, 
\begin{equation}\label{ChangeOfVariableHessian}
    \nabla^2 W(A) = (\Sigma_\beta^{\frac12}\otimes \Sigma_\alpha^{\frac12})\nabla^2 \tilde W (\Sigma_\alpha^{\frac12} A \Sigma_\beta^{\frac12}) (\Sigma_\beta^{\frac12}\otimes \Sigma_\alpha^{\frac12}).
\end{equation}

By the envelope theorem, $\nabla \tilde W(A) = \Sigma$, where $\Sigma$ is the maximizer of \eqref{eq:tilde_W}, thus reducing the computation of $\nabla^2 \tilde W(A) $ to differentiating the optimality condition of $\Sigma$, \emph{i.e.,}
\begin{equation}\label{SigmaOptimalityGaussian}
A = \epsilon^{-1} \Sigma(\Id - \Sigma^\top \Sigma)^{-1}.
\end{equation}
Recall also that such a $\Sigma$  has an explicit formula given in \eqref{eq:Sigma_galichon}.

\begin{lem}\label{lem:hessian_formula_simp}
\begin{align}
\nabla^2 \tilde W(A) &=\epsilon   \pa{\epsilon^2 (\Id - \Sigma^\top \Sigma)^{-1} \otimes  (\Id - \Sigma \Sigma^\top)^{-1}
+(A^\top\otimes A) \TT}^{-1}  \label{eq:hessian_formula_simp}
\end{align}
and $\Sigma$ is as in \eqref{SigmaOptimalityGaussian} and $\mathbb T$ is the linear map defined by $\mathbb T\mbox{vec}(X)=\mbox{vec}(X^\top)$.
\end{lem}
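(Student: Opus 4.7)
The plan is to compute $\nabla^2\tilde W(A)$ by first noting, via the envelope theorem, that $\nabla \tilde W(A) = \Sigma^*(A)$, where $\Sigma^*(A)$ is the unique maximizer in \eqref{eq:tilde_W}; uniqueness follows from strict concavity of the inner objective, and the same strict concavity ensures, through the implicit function theorem, that $\Sigma^*$ depends smoothly on $A$. Consequently $\nabla^2\tilde W(A) = \partial_A\Sigma^*$, and the task reduces to differentiating the first-order relation \eqref{SigmaOptimalityGaussian} implicitly with respect to $A$.

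Concretely, I would clear the inverse in \eqref{SigmaOptimalityGaussian} and take the Fréchet differential of the resulting polynomial identity in the direction $dA$. The product rule produces a linear identity relating $dA$, $d\Sigma$, and $d\Sigma^\top$, with matrix coefficients depending on $\Sigma$ and $A$. Two algebraic consequences of \eqref{SigmaOptimalityGaussian} then simplify these coefficients: the Woodbury-type identity
$$ (\Id-\Sigma\Sigma^\top)^{-1} \;=\; \Id + \Sigma(\Id-\Sigma^\top\Sigma)^{-1}\Sigma^\top $$
collapses the combination multiplying $d\Sigma$ into a multiple of $(\Id-\Sigma\Sigma^\top)^{-1}$, while \eqref{SigmaOptimalityGaussian} itself lets me replace $\Sigma(\Id-\Sigma^\top\Sigma)^{-1}$ in the $d\Sigma^\top$ coefficient by a multiple of $A$. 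After multiplying from the right by $(\Id-\Sigma^\top\Sigma)^{-1}$, the differential identity takes the form
$$ \epsilon\, dA \;=\; \epsilon^2\,(\Id-\Sigma\Sigma^\top)^{-1}\,d\Sigma\,(\Id-\Sigma^\top\Sigma)^{-1} \;+\; A\, d\Sigma^\top A. $$

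Vectorizing both sides using the standard identities $\mathrm{vec}(BXC) = (C^\top\otimes B)\mathrm{vec}(X)$ and $\mathrm{vec}(d\Sigma^\top) = \mathbb T\,\mathrm{vec}(d\Sigma)$ converts this into
$$ \epsilon\,\mathrm{vec}(dA) \;=\; \bigl[\,\epsilon^2(\Id-\Sigma^\top\Sigma)^{-1}\otimes(\Id-\Sigma\Sigma^\top)^{-1} \;+\; (A^\top\otimes A)\,\mathbb T\,\bigr]\,\mathrm{vec}(d\Sigma). $$
Inverting the bracketed operator, which is invertible for the same strict-concavity reason that powered the implicit function theorem step, yields $\partial_A\Sigma = \nabla^2\tilde W(A)$ in the exact form \eqref{eq:hessian_formula_simp}.

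The main obstacle is the bookkeeping of $\epsilon$-factors and the order of algebraic manipulations: because both $d\Sigma$ and $d\Sigma^\top$ appear on the right-hand side, the swap operator $\mathbb T$ enters asymmetrically and does not commute with the Kronecker factors, so the two uses of the optimality relation must be applied at precisely the right moment to obtain the stated grouping of $\epsilon^2$ with $Q\otimes P$ rather than with $(A^\top\otimes A)\mathbb T$. Importantly, no invertibility or squareness assumption on $A$ is used anywhere in the derivation, which is exactly what distinguishes this formula from Galichon's earlier expression based on $A^\dagger$.
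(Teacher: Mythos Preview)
Your proposal is correct and follows essentially the same approach as the paper: implicit differentiation of the first-order optimality relation \eqref{SigmaOptimalityGaussian}, simplified using the Woodbury identity $(\Id-\Sigma\Sigma^\top)^{-1}=\Id+\Sigma(\Id-\Sigma^\top\Sigma)^{-1}\Sigma^\top$ together with the optimality relation itself to replace $\Sigma(\Id-\Sigma^\top\Sigma)^{-1}$ by a multiple of $A$. The only cosmetic difference is that the paper differentiates $G(\Sigma,A)=\Sigma(\Id-\Sigma^\top\Sigma)^{-1}-\epsilon^{-1}A$ directly in vectorized form and then inverts $\partial_\Sigma G$, whereas you first clear the inverse, differentiate in matrix form, and vectorize only at the end; both routes land on the same bracketed operator.
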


\begin{proof}[Proof of Lemma \ref{lem:hessian_formula_simp}]
Define $G(\Sigma,A) \eqdef \Sigma(\Id - \Sigma^\top \Sigma)^{-1} - \epsilon^{-1} A$.  
Note that a maximizer $\Sigma$ of \eqref{eq:tilde_W} satisfies $G(\Sigma,A)=0$. 
Moreover,  $\partial_\Sigma G$ is invertible at such $(\Sigma,A)$ because this is the Hessian of $\log\det(\Id - \Sigma^\top\Sigma)$, a strictly concave function. By the implicit function theorem, there exists a function $f : A \mapsto \Sigma$ such that $G(f(A),A) = 0$ and
$$
\nabla f = (\partial_\Sigma G)^{-1} \partial_A G = \epsilon^{-1} (\partial_\Sigma G)^{-1}.
$$
It remains to compute $\partial_\Sigma G$ at $(f(A),A)$:

At an arbitrary point $(\Sigma,A)$ we have
\begin{align*}
\partial_\Sigma G& = (\Id - \Sigma^\top \Sigma)^{-1} \otimes \Id 
+ (\Id \otimes \Sigma) \partial_\Sigma (\Id - \Sigma^\top \Sigma)^{-1}\\
&= (\Id - \Sigma^\top \Sigma)^{-1} \otimes \Id 
+ (\Id \otimes \Sigma) 
\pa{ (\Id - \Sigma^\top \Sigma)^{-1}\otimes  (\Id - \Sigma^\top \Sigma)^{-1}}
\pa{ (\Sigma^\top \otimes \Id )\TT + (\Id \otimes \Sigma^\top) }
\end{align*}

and at $(f(A),A)$, \emph{i.e,} with $\Sigma(\Id - \Sigma^\top \Sigma)^{-1}= \epsilon^{-1} A$, we can further simplify to 
$$
\partial_\Sigma G = (\Id - \Sigma^\top \Sigma)^{-1} \otimes ( \Id   + \Sigma (\Id - \Sigma^\top \Sigma)^{-1} \Sigma^\top) )
+ \epsilon^{-2}  (
 A^\top \otimes A )\TT 
$$
By the Woodbury matrix formula,
$$
 ( \Id   + \Sigma (\Id - \Sigma^\top \Sigma)^{-1} \Sigma^\top) = (\Id - \Sigma\Sigma^\top)^{-1} =\Id + A\Sigma^\top.
$$
So,
$$
\partial_\Sigma G =   (\Id - \Sigma^\top \Sigma)^{-1} \otimes  (\Id - \Sigma \Sigma^\top)^{-1}
+\epsilon^{-2} (A^\top\otimes A) \TT,
$$
thus concluding the proof.
\end{proof}

We remark that from the connection between $W(A)$ and $\tilde W(A)$, \emph{i.e.,} \ref{ChangeOfVariableHessian}, we obtain  Lemma \ref{lem:hessian_formula} as a corollary.

\subsection{Limit cases}

\paragraph{SVD representation of the covariance}
To derive the limiting expressions, we make an observation on the singular value decomposition of $\Sigma$:
Let the singular value decomposition of $A$ be $A = U DV^\top$, where $D$ is the diagonal matrix with positive entries $d_i$.
Note that $\Delta = (\Id + \frac{\epsilon^2}{4} (A^\top A)^\dagger)^{\frac12} = V(\Id + \frac{\epsilon^2}{4} D^{-2} )^{\frac12} V^\top$. Moreover, $\Delta$ and $A^\top A$ commute, so
\begin{align}\label{SVDofSIgma}
\Sigma &= A\Delta( (\Delta^2 A^\top A)^\dagger )^{\frac12} \Delta -\frac{\epsilon}{2}A^{T,\dagger}
= U  \pa{\pa{\Id + \frac{\epsilon^2}{4}D^{\dagger, 2}}^{\frac12} - \frac{\epsilon}{2} D^{\dagger} } V^\top = U \tilde D V^\top,
\end{align}
where $\tilde D$ is the diagonal matrix with  diagonal entries 
\begin{equation}\label{eq:tilde_d_i}
    \tilde d_i = \sqrt{\pa{1+\frac{\epsilon^2}{4d_i^2} } }- \frac{\epsilon}{2}\frac{1}{d_i} .
\end{equation}

\subsubsection{Link with Lasso: Limit as \texorpdfstring{$\epsilon \to\infty$}{large epsilon}}

Note that
\begin{equation}\label{eq:eps_inf_svd_cost}
    \tilde d_i = \frac{\epsilon}{2 d_i} \sqrt{1+\frac{4d_i^2}{\epsilon^2}} -  \frac{\epsilon}{2d_i} = \frac{d_i}{\epsilon} - \frac{d_i^3}{\epsilon^3} + \Oo(\epsilon^{-5}) \to 0,\qquad \epsilon\to\infty.
\end{equation}
It follows that $\lim_{\epsilon \to \infty}\Sigma =  0$ and hence, $\epsilon \nabla^2 \tilde W(A) \to \Id$. So, the certificate converges to
\begin{equation}
 (\Sigma_\beta\otimes \Sigma_\alpha)_{(:,I)} \big((\Sigma_\beta\otimes \Sigma_\alpha)_{(I,I)}\big)^{-1} \sign(\Asoln)_I.
\end{equation}

\begin{proof}[Proof of Proposition \ref{prop:gaussian_obj_lasso}]
The iOT problem approaches a Lasso problem as $\epsilon\to \infty$.
Recall that in the Gaussian setting, the iOT problem is of the form
\begin{equation}
\argmin_A \Ff(A) = \lambda \norm{A}_1 + \dotp{A}{\Sigma_{\epsilon,A} - \Sigma_{\epsilon,\Asoln}} + \frac{\epsilon}{2}\log\det\pa{\Sigma_\beta - \Sigma_{\epsilon,A}^\top \Sigma_\alpha^{-1} \Sigma_{\epsilon,A}}
\end{equation}
where $\Sigma_{\epsilon,A}$ satisfies $
\Sigma_\beta - \Sigma_{\epsilon,A}^\top\Sigma_\alpha^{-1} \Sigma_{\epsilon,A} = \epsilon \Sigma_\alpha^{-1} \Sigma_{\epsilon,A} A^{-1}.
$
So, we can write
\begin{equation}
\argmin_A \Ff(A)\eqdef \lambda \norm{A}_1 + \dotp{A}{\Sigma_{\epsilon,A} - \Sigma_{\epsilon,\Asoln}} + \frac{\epsilon}{2}\log\det\pa{\epsilon \Sigma_\alpha^{-1} \Sigma_{\epsilon,A} A^{-1}}
\end{equation}

\newcommand{\Sigmea}{\Sigma_{\epsilon,A}}
\newcommand{\rSigmea}{\tilde \Sigma_{\epsilon,A}}
\newcommand{\rDelta}{\tilde \Delta}
Let $$
X\eqdef \Sigma_\alpha^{\frac12} A \Sigma_\beta^{\frac12}\qandq \rSigmea = \Sigma_\alpha^{-\frac12} \Sigmea \Sigma_\beta^{-\frac12}.
$$
From \eqref{eq:tilde_d_i},
if $X$ has SVD decomposition $W = U\diag(d_i)V^\top$, then
$$
\rSigmea = U  \tilde D V^\top,\qwhereq \tilde D = \diag(\tilde d_i)
$$
and
\begin{align*}
\tilde d_i = \frac{d_i}{\epsilon} - \frac{d_i^3}{\epsilon^3} + \Oo(\epsilon^{-5}).
\end{align*}
So,
\begin{align*}
\log\det(\epsilon \Sigma_\alpha^{-1} \Sigmea A^{-1}) &= \log\det\pa{ \epsilon \Sigma_\alpha^{-\frac12} \rSigmea X^{-1} \Sigma_\alpha^{\frac12}}\\
&=\log\det\pa{ U \diag\pa{ 1 - d_i^2/\epsilon^2 + \Oo(\epsilon^{-4})  } U^\top }\\
&= \log\det\pa{ \diag\pa{ 1 - d_i^2/\epsilon^2 + \Oo(\epsilon^{-4})  }  } = -\frac{1}{\epsilon^2}\norm{X}_F^2 + \Oo(\epsilon^{-4}).
\end{align*}
Also,
$$
\epsilon\dotp{\Sigmea - \Sigma_{\epsilon,\Asoln}}{A} = \epsilon\dotp{\rSigmea - \tilde \Sigma_{\epsilon,\Asoln}}{X}  = \dotp{X - X_0 }{X} + \Oo(\epsilon^{-2}).
$$
So, assuming that $\lambda = \lambda_0/\epsilon$,
\begin{align}
\epsilon \Ff(A)&=  \lambda_0\norm{A}_1 + \dotp{X-X_0 }{X}- \norm{X}_F^2  + \Oo(\epsilon^{-2}) \\
&= \lambda_0\norm{A}_1 + \norm{(\Sigma_{\beta}^{\frac12}\otimes \Sigma_{\al}^{\frac12})(A - \Asoln)}_F^2 - \frac12 \norm{(\Sigma_{\beta}^{\frac12}\otimes \Sigma_{\al}^{\frac12}) \Asoln}^2  + \Oo(\epsilon^{-2})
\end{align}
The final statement on the convergence of minimizers follows by Gamma-convergence. 
\end{proof}

\subsubsection{Link with Graphical Lasso}

In the special case where the covariances are the identity ($\Sigma_\alpha=\Id$ and $\Sigma_\beta=\Id$) and $A$ is symmetric positive definite we have that $\Sigma$ is also positive definite and Galichon's formula \eqref{eq:Sigma_galichon} holds (since $A$ is invertible) and hence the Hessian reduces to
\begin{align}\label{HessianSpecialCaseIdentityCovarice}
&\pa{\frac{1}{\epsilon} \nabla^2 \tilde W(A)}^{-1} =  (A\otimes A)  \pa{ \Sigma^{-1}\otimes \Sigma^{-1} + \TT }.
\end{align}
Moreover, if $A$ admits an eigenvalue decomposition $A=UD U^\top$, then $\Sigma$ admits an eigenvalue decomposition  $\Sigma=U \tilde D U^\top$ with entries  of $\tilde D$ given by \eqref{eq:tilde_d_i}. Note that it follows that $\lim_{\epsilon\to 0} \Sigma=\Id$ and, hence,  $\lim_{\epsilon\to 0} \pa{ \Sigma^{-1}\otimes \Sigma^{-1} + \TT } = \Id + \TT$, a singular matrix with the kernel being the set of asymmetric matrices. So, the limit does not necessarily exist as $\epsilon \to 0$.
However, in the special case where  $A$ is \emph{symmetric positive definite}, one can show that the certificates remain well defined as $\epsilon\to 0$:
Let $S_+$ be the set of matrices $\psi$ such that $\psi$ is symmetric and $S_-$ be the set of matrices $\psi$ such that $\psi$ is anti-symmetric. Then, $(\nabla^2 \tilde W(A) )^{-1}( S_+) \subset S_+$ and $(\nabla^2 \tilde W(A) )^{-1}( S_-) \subset S_-$.  Moreover,
\begin{align*}
    \pa{\frac{1}{\epsilon} \nabla^2 \tilde W(A)}^{-1} \restriction_{S_+} &= 
(A\otimes A)\pa{ \Sigma^{-1}\otimes \Sigma^{-1}  + \Id  }, \\ 
\pa{\frac{1}{\epsilon} \nabla^2 \tilde W(A)}^{-1} \restriction_{S_-} &= 
(A\otimes A)\pa{ \Sigma^{-1}\otimes \Sigma^{-1}  - \Id  }.
\end{align*}
Since the symmetry of $A$ implies the symmetry of $\mbox{sign}(A)$, we can replace the Hessian given in \eqref{HessianSpecialCaseIdentityCovarice} by $(A\otimes A)( \Sigma^{-1}\otimes \Sigma^{-1}  + \Id )$ 
and, hence, since $\lim_{\epsilon\to 0} \Sigma=\Id$,
the limit as $\epsilon \to 0$ is
\begin{equation}\label{eq:limit_eps_0}
\lim_{\epsilon \to 0} z_\epsilon = (A^{-1}\otimes A^{-1})_{(:,I)}\big((A^{-1}\otimes A^{-1})_{(I,I)}\big)^{-1} \sign(A)_I.
\end{equation}

This coincides precisely with the certificate of the \textit{graphical lasso}:
$$
\argmin_{\Theta\succeq 0} \dotp{S}{\Theta} - \log\det(\Theta) + \lambda \norm{
\Theta}_1.
$$

\begin{proof}[Proof of Proposition \ref{prop:gaussian_obj_graph_lasso}]
The iOT problem with identity covariances restricted to the set of positive semi-definite matrices has the form
\begin{equation}\label{eq:iOT_identityCovariances}
\argmin_{A\succeq 0} \Ff_{\epsilon,\lambda}(A), \qwhereq \Ff_{\epsilon,\lambda}(A) \eqdef\lambda \norm{A}_1 + \dotp{A}{\Sigma_{\epsilon,A} - \hat \Sigma} + \frac{\epsilon}{2}\log\det(\Id - \Sigma_{\epsilon,A}^\top\Sigma_{\epsilon,A}),
\end{equation}
where
$
I- \Sigma_{\epsilon,A}^\top\Sigma_{\epsilon,A} = \epsilon \Sigma_{\epsilon,A} A^{-1}.
$
 From the singular value decomposition of $\Sigma_{\epsilon,A}$,\emph{i.e.,} \eqref{SVDofSIgma}, we see that if $A$ is symmetric positive definite, then so is $\Sigma_{\epsilon,A}$. 
Plugging  the optimality condition, \emph{i.e., } $I- \Sigma_{\epsilon,A}^\top\Sigma_{\epsilon,A} = \epsilon \Sigma_{\epsilon,A} A^{-1}$,  into \eqref{eq:iOT_identityCovariances}, we obtain
\begin{align*}
&\argmin_{A\succeq 0} \lambda \norm{A}_1 + \dotp{A}{\Sigma_{\epsilon,A} - \hat \Sigma} + \frac{\epsilon}{2}\log\det(\epsilon A^{-1} \Sigma_{\epsilon,A}) \\
=&  \argmin_{A\succeq 0} \lambda \norm{A}_1  -\frac{\epsilon}{2}\log\det( A/\epsilon)+ 
\epsilon \dotp{A}{(\Sigma_{\epsilon,A} - \hat \Sigma)/\epsilon}  + \frac{\epsilon}{2} \log\det( \Sigma_{\epsilon,A})  
\end{align*}
Let $\lambda = \epsilon\lambda_0$ for some $\lambda_0>0$. Then, removing the constant $\frac{\epsilon}{2}\log\det(\epsilon\Id)$ term and factoring out $\epsilon$,  the problem is equivalent to
$$
\argmin_{A\succeq 0} \lambda_0 \norm{A}_1  -\frac{1}{2}\log\det( A)+  \dotp{A}{ \epsilon^{-1}(\Sigma_{\epsilon,A} - \hat \Sigma)}  + \frac{1}{2} \log\det( \Sigma_{\epsilon,A}) 
$$
 Assume that $\hat \Sigma = \Sigma_{\epsilon, \Asoln}$. 
From the expression for the singular values of   $\Sigma_{\epsilon,A}  $  in  \eqref{eq:tilde_d_i}, note that
$$
\Sigma_{\epsilon,A}  = \Id - \frac{\epsilon}{2}A^{-1} +\Oo(\epsilon^2).
$$
So, $\lim_{\epsilon\to 0}(\Sigma_{\epsilon, A} - \Sigma_{\epsilon, \Asoln})/\epsilon = -\frac12\pa{A^{-1} - \Asoln^{-1}} $.
The objective converges pointwise to
$$
\lambda_0 \norm{A}_1  -\frac{1}{2}\log\det( A)+ \frac12 \dotp{A}{\Asoln^{-1} - A^{-1}} ,
$$
and the statement is then a direct consequence of Gamma-convergence.
\end{proof}
\begin{rem}\label{rem:graphical_lasso_interp}
Note that from \eqref{eq:tilde_d_i}, we have $\Sigma = \Id - \frac{\epsilon
}{2}A^{-1}+\Oo(\epsilon^2)$. So, in this case, the covariance of $\hat \pi$ is $$\begin{pmatrix}
   \Id & \Id - \frac{\epsilon
}{2}A^{-1}+\Oo(\epsilon^2)\\
\Id - \frac{\epsilon
}{2}A^{-1}+\Oo(\epsilon^2) & \Id
\end{pmatrix}.
$$ For $(X,Y)\sim \pi$, The Schur complement of this is the covariance of $X$ conditional on $Y$, which is $\epsilon A^{-1} + \Oo(\epsilon^2)$. So, up the $\epsilon$, one can  see $A^{-1}$ as the ``precision" matrix of the  covariance of $X$ conditional on $Y$.
\end{rem}

\section{Large scale \texorpdfstring{$\ell^1$}{l1}-iOT solver}
\label{sec:solver}

Recall that the iOT optimization problem, recast over the dual potentials for empirical measures, reads
\begin{equation*}
 \inf_{A,F,G }  
    \frac{1}{n} 
    \sum_{i} F_i + G_i + (\Phi_n A)_{i,i}
    +  \frac{\epsilon}{2 n^2} \sum_{i,j} \exp\pa{\frac{2}{\epsilon}(F_i+ G_j + (\Phi_n A)_{i,j}} + \lambda \norm{A}_1.
\end{equation*}
To obtain a better-conditioned optimization problem, in line with~\cite{cuturi2016smoothed}, we instead consider the semi-dual problem, which is derived by leveraging the closed-form expression for the optimal \(G\), given \(F\).
\begin{equation*}
 \inf_{A,F }  
    \frac{1}{n} 
    \sum_{i} F_i + (\Phi_n A)_{i,i}
+ \frac{\epsilon}{n} \sum_i \log \frac{1}{n} \sum_j 
    \exp\pa{\frac{2}{\epsilon}(F_i + (\Phi_n A)_{i,j}} + \lambda \norm{A}_1.
\end{equation*}
Following~\cite{poon2021smooth}, which proposes a state-of-the-art Lasso solver, the last step is to use the following Hadamard product over-parameterization of the $\ell^1$ norm
$$
    \norm{A}_1 = \min_{U \odot V}
        \frac{\norm{U}^2_2}{2}
        +
        \frac{\norm{V}^2_2}{2}.
$$
where the Hadamard product is $U \odot V \eqdef (U_{i} V_i)_i$, 
to obtain the final optimization problem
\begin{equation*}
 \inf_{A,U,V }  
    \frac{1}{n} 
    \sum_{i} F_i + (\Phi_n (U \odot V))_{i,i}
+ \frac{\epsilon}{n} \sum_i \log \frac{1}{n} \sum_j 
    \exp\pa{\frac{2}{\epsilon}(F_i + (\Phi_n (U \odot V))_{i,j}} +  \frac{\lambda}{2} \norm{U}^2_2
        +
   \frac{\lambda}{2} \norm{V}^2_2.
\end{equation*}
This is a smooth optimization problem, for which we employ a quasi-Newton solver (L-BFGS). Although it is non-convex, as demonstrated in~\cite{poon2021smooth}, the non-convexity is benign, ensuring the solver always converges to a global minimizer, \( (F^\star,U^\star,V^\star) \), of the functional. From this, one can reconstruct the cost parameter, \( A^\star \eqdef U^\star \odot V^\star \).

\end{document}